\documentclass[12pt]{article}

\usepackage{amsmath, amssymb, amsthm}
\usepackage{mathrsfs}
\usepackage{geometry}
\usepackage{graphicx}
\usepackage{cite}
\usepackage[colorlinks=true]{hyperref}
\usepackage{authblk}
\usepackage{setspace}

\newtheorem{theorem}{Theorem}[section]
\newtheorem{lemma}[theorem]{Lemma}
\newtheorem{proposition}[theorem]{Proposition}
\newtheorem{corollary}[theorem]{Corollary}
\newtheorem{definition}[theorem]{Definition}
\theoremstyle{remark}
\newtheorem{remark}[theorem]{Remark}

\title{Zelevinsky Segments and Hyperspecial Branching Laws for All Depth-Zero Representations of $\operatorname{GL}_N(F)$}
\author{Runze Wang\\
Peking University, Beijing, China\\
\texttt{runze\_wang@stu.pku.edu.cn}}
\date{}

\begin{document}
\maketitle

\begin{abstract}
Let \(G\) be an unramified connected reductive group over a
non-Archimedean local field, let \(K\) be a hyperspecial maximal
compact subgroup, and let \(K_+\) be its pro-unipotent radical.
We prove that taking \(K_+\)-fixed vectors is compatible with
Aubert duality on the \(p\)-adic side and Alvis--Curtis duality
on the finite reductive quotient \(K/K_+\).

We then specialize to \(G_N=\operatorname{GL}_N(F)\) and determine
the complete decomposition of \(V^{K_{N+}}\) for every
irreducible depth-zero representation \(V\). This solves the
Iwahori-spherical problem proposed by Dipendra Prasad and extends
it to all depth-zero Bernstein blocks. For representations
supported on a single depth-zero cuspidal line, we give explicit
multiplicity formulas in terms of Zelevinsky decomposition
numbers and Kostka numbers. The lower and upper bounds for the
irreducible constituents arise naturally from the Zelevinsky
segments of \(V\) and of its Aubert dual, respectively, and both
endpoint constituents occur with multiplicity one. For general
depth-zero Bernstein blocks, the multiplicities are products of
the corresponding simple-block multiplicities. In the generic
case, they are products of Kostka numbers.
\end{abstract}
\section{Introduction}

Let $F$ be a non-Archimedean local field with ring of integers
$\mathcal O_F$, maximal ideal $\mathfrak p_F$, and residue field
$\mathbb F_q$.  Put
$$
G_N=\operatorname{GL}_N(F),\qquad
K_N=\operatorname{GL}_N(\mathcal O_F),
$$
and let $K_{N+}$ be the pro-unipotent radical of $K_N$.  Then
$$
K_N/K_{N+}\simeq G_N^F:=\operatorname{GL}_N(\mathbb F_q).
$$
Hence, for every smooth representation $(\pi,V)$ of $G_N$, the space
$V^{K_{N+}}$ is naturally a representation of the finite reductive group
$G_N^F$.

The problem considered in this paper is to determine this finite-group
representation when $V$ is irreducible and of depth zero.  More
precisely, for every irreducible representation $\sigma$ of $G_N^F$,
inflated to $K_N$, we determine
$$
\dim_{\mathbb C}\operatorname{Hom}_{K_N}(\sigma,V)
=
\dim_{\mathbb C}\operatorname{Hom}_{G_N^F}
\bigl(\sigma,V^{K_{N+}}\bigr).
$$
Thus we determine the complete hyperspecial branching law for every
irreducible depth-zero representation of $\operatorname{GL}_N(F)$,
including all irreducible constituents and their multiplicities.

The decomposition of $V^{K_{N+}}$ also contains geometric information
about $V$.  In particular, the wave-front set of a depth-zero
representation is determined, through DeBacker's lifting, by the
Kawanaka wave-front sets of the irreducible constituents occurring in
$V^{K_{N+}}$.

This problem is motivated by Question~2 of Prasad~\cite{Prasad2025}.
For an unramified reductive group $G$, Prasad asks which irreducible
representations of $G(F)$ contain a prescribed irreducible
representation of $G(\mathbb F_q)$ occurring in the finite principal
series.(In terms of enhanced L parameter.)  For $\operatorname{GL}_N$, he asks in particular for a
description in terms of the Zelevinsky classification.  The
representations appearing in this formulation are Iwahori-spherical.

In our previous work~\cite{WangIwahori}, we treated this
Iwahori-spherical problem for $\operatorname{GL}_N(F)$.  The present
paper extends the branching problem from the principal
Harish--Chandra series to all depth-zero Bernstein blocks.  Namely, we
determine $V^{K_{N+}}$ for every irreducible depth-zero representation
$V$ of $\operatorname{GL}_N(F)$.

We emphasize the precise scope of the results.  The complete
hyperspecial branching law is established here for
$\operatorname{GL}_N$.  For an arbitrary unramified connected
reductive group, we do not claim such a complete decomposition.
Instead, in that generality we prove that taking $K_+$-fixed vectors
is compatible with Aubert duality on the $p$-adic side and
Alvis--Curtis duality on the finite reductive quotient.  This general
duality result is independent of the later specialization to general
linear groups.

\medskip
\noindent
\textbf{Finite Harish--Chandra series.}

We first introduce the notation for finite Harish--Chandra series that
will be used throughout the paper.  Let
$$
P^F=L^FU^F
$$
be a standard parabolic subgroup of
$$
G_n^F=\operatorname{GL}_n(\mathbb F_q),
$$
and let $\rho$ be an irreducible cuspidal representation of $L^F$.
Set
$$
N_{G_n^F}(L^F,\rho)
=
\left\{
g\in N_{G_n^F}(L^F)\mid {}^g\rho\simeq\rho
\right\}
$$
and
$$
W(L^F,\rho)
=
N_{G_n^F}(L^F,\rho)/L^F.
$$
The irreducible representations in the Harish--Chandra series
$$
\operatorname{Ind}_{P^F}^{G_n^F}(\rho)
$$
are controlled by the opposite endomorphism algebra
$$
\mathcal H
=
\operatorname{End}_{G_n^F}
\left(
\operatorname{Ind}_{P^F}^{G_n^F}(\rho)
\right)^{\operatorname{op}}.
$$
By finite Harish--Chandra theory,
$$
\mathcal H
\simeq
\mathcal H
\left(
W(L^F,\rho),\{q^{c_s}\}_{s\in S}
\right),
$$
where $S$ is the set of simple reflections of $W(L^F,\rho)$.

There is a natural bijection between the simple modules of this Hecke
algebra and the irreducible representations of the Coxeter group
$W(L^F,\rho)$.  If $\lambda$ is an irreducible representation of
$W(L^F,\rho)$, we denote by
$$
V_\lambda^{(\rho)}
$$
the corresponding irreducible component of
$\operatorname{Ind}_{P^F}^{G_n^F}(\rho)$.  When the cuspidal datum
$\rho$ is clear from the context, we simply write $V_\lambda$.

This notation is defined for an arbitrary finite Harish--Chandra
series.  Only later, in the depth-zero blocks relevant to our main
decomposition theorem, will the relative Weyl group become a symmetric
group and $\lambda$ become a partition.

\medskip
\noindent
\textbf{Main Theorem I: compatibility with duality.}

Our first main result is valid for arbitrary unramified connected
reductive groups.  Let $G$ be such a group, let $K$ be a hyperspecial
maximal compact subgroup, and let $K_+$ be its pro-unipotent radical.
The quotient
$$
K/K_+
$$
is the group of rational points of a finite reductive group.

Let $\operatorname{St}$ denote Aubert duality on the $p$-adic side and
let $D$ denote Alvis--Curtis duality on the finite side.  We prove in
Theorem~4.5 that
$$
\bigl(\operatorname{St}(V)\bigr)^{K_+}
=
D\bigl(V^{K_+}\bigr).
$$
Thus the $K_+$-fixed-vector functor intertwines Aubert duality with
finite Alvis--Curtis duality.

This result applies to arbitrary unramified connected reductive groups
under consideration.  In the case of general linear groups, the
finite Harish--Chandra series occurring below have relative Weyl group
a symmetric group.  Under the parametrization introduced above,
finite duality then has the particularly simple form
$$
V_\lambda^{(\rho)}
\longmapsto
V_{\lambda'}^{(\rho)},
$$
where $\lambda'$ denotes the conjugate partition.  This compatibility
is one of the main structural ingredients in our decomposition
theorem and explains the symmetry between the two extremal
constituents which appear below.

\medskip
\noindent
\textbf{A single depth-zero cuspidal line.}

We now specialize to $\operatorname{GL}_N$.  Fix an irreducible
depth-zero supercuspidal representation $\tau$ of $G_r$.  Let
$\tau_0$ be the irreducible cuspidal representation of
$$
G_r^F=\operatorname{GL}_r(\mathbb F_q)
$$
associated with the depth-zero type of $\tau$.(see the main text for the precise definition)

Consider the Bernstein block of $G_{nr}$ associated with
$$
\bigl((G_r)^n,\tau^{\otimes n}\bigr).
$$
On the finite side, take
$$
L^F=(G_r^F)^n
$$
and
$$
\rho=\tau_0^{\otimes n}.
$$
In this case,
$$
W(L^F,\rho)\simeq S_n,
$$
and all the Hecke parameters are equal to $q^r$.  Thus
$$
\operatorname{End}_{G_{nr}^F}
\left(
\operatorname{Ind}_{P^F}^{G_{nr}^F}(\rho)
\right)^{\operatorname{op}}
\simeq
\mathcal H(S_n,q^r).
$$
The irreducible representations of $S_n$ are parametrized by
partitions $\lambda\vdash n$.  Hence, in this special
Harish--Chandra series, the notation introduced above becomes
$$
V_\lambda^{(\rho)},
\qquad
\lambda\vdash n.
$$

The multiplicities in our branching formulas are described using
Kostka numbers.  If $\lambda$ and $\mu$ are partitions of the same
integer, the Kostka number
$$
K_{\lambda,\mu}
$$
is the number of semistandard Young tableaux of shape $\lambda$ and
weight $\mu$.  Young's rule describes induction from Young subgroups
of symmetric groups in terms of these numbers, and the corresponding
statement holds for the semisimple Hecke algebra
$\mathcal H(S_n,q^r)$.

We also recall the Zelevinsky notation used in the statements below.
Put
$$
\nu(g)=|\det g|.
$$
If $\sigma$ is an irreducible cuspidal representation, a segment is
$$
\Delta
=
[\sigma,\nu^k\sigma]
=
\{\sigma,\nu\sigma,\ldots,\nu^k\sigma\}.
$$
The normalized parabolic induction
$$
\sigma\times\nu\sigma\times\cdots\times\nu^k\sigma
$$
has a unique irreducible subrepresentation, denoted
$\langle\Delta\rangle$.  We denote its Aubert dual by
$$
\operatorname{St}(\langle\Delta\rangle).
$$

More generally, if
$$
a=\{\Delta_1,\ldots,\Delta_s\}
$$
is an ordered multiset of segments, let $\pi(a)$ denote the
corresponding standard module and let $\langle a\rangle$ denote its
distinguished irreducible subrepresentation.  Every irreducible
representation of a general linear group is obtained in this manner.

For a multisegment $a$, let $P(a)$ denote the conjugate of the
partition obtained by arranging the lengths of the segments in $a$ in
non-increasing order.

\medskip
\noindent
\textbf{Main Theorem II: the complete decomposition on one cuspidal
line.}

Suppose first that
$$
V
=
\operatorname{St}(\langle\Delta_1\rangle)
\times\cdots\times
\operatorname{St}(\langle\Delta_s\rangle)
$$
is generic.  Then the segments $\Delta_1,\ldots,\Delta_s$ are
pairwise unlinked.  Let $\lambda$ be the conjugate of the partition
obtained from their lengths.  We prove in Theorem~12.2 that
$$
V^{K_{nr+}}
\simeq
\bigoplus_{\mu\unlhd\lambda}
K_{\mu',\lambda'}V_\mu^{(\rho)}.
$$
Thus, in the generic case, the complete hyperspecial decomposition is
given directly by Young's rule: both the irreducible constituents and
their multiplicities are determined explicitly by Kostka numbers.

We then treat an arbitrary irreducible representation
$$
V=\langle a\rangle
$$
in the same Bernstein block.  Let $b$ be the multisegment determined
by
$$
\langle a\rangle
=
\operatorname{St}(\langle b\rangle).
$$
Theorem~12.2 gives
$$
V^{K_{nr+}}
\simeq
\bigoplus_{P(a)'\unlhd\mu\unlhd P(b)}
m_{V,\mu}V_\mu^{(\rho)}.
$$
Thus the irreducible constituents lie in the dominance interval
$$
P(a)'\unlhd\mu\unlhd P(b).
$$
The lower endpoint is determined by the lengths of the Zelevinsky
segments defining $V$, while the upper endpoint is determined by the
segment lengths of its Aubert dual.  Both bounds are sharp, and both
endpoint constituents occur with multiplicity one:
$$
m_{V,P(a)'}=m_{V,P(b)}=1.
$$

The multiplicities admit two complementary formulas.  Write
$$
\langle a\rangle
=
\sum_{d\leq a}c_d\pi(d)
$$
and
$$
\langle b\rangle
=
\sum_{e\leq b}c_e\pi(e)
$$
in the Grothendieck group.  Then
$$
m_{V,\mu}
=
\sum_{d\leq a}
c_dK_{\mu,P(d)'}
$$
and
$$
m_{V,\mu}
=
\sum_{e\leq b}
c_eK_{\mu',P(e)'}.
$$
The first formula arises from inductions of trivial representations of
the relevant Young subgroups, while the second arises from inductions
of sign representations.  The compatibility with Aubert duality in
Theorem~4.5 gives a direct relation between these two formulas.

The coefficients $c_d$ and $c_e$ are determined by the inverse of the
Zelevinsky decomposition matrix.  More precisely, the standard modules
satisfy
$$
\pi(a)
=
\sum_{b\leq a}
m(b;a)\langle b\rangle,
$$
where the Zelevinsky decomposition numbers $m(b;a)$ form a
unitriangular matrix.  We regard these numbers as established data from
the Zelevinsky classification; their calculation is independent of the
hyperspecial branching problem considered here.

Consequently, the complete branching law on a single depth-zero
cuspidal line is reduced to two established kinds of combinatorial
data: the Zelevinsky decomposition numbers and the Kostka numbers.  In
many cases the inverse Zelevinsky coefficients can themselves be
written explicitly, and the resulting decomposition is then completely
explicit.

\medskip
\noindent
\textbf{Several distinct depth-zero cuspidal lines.}

We finally pass to an arbitrary depth-zero Bernstein block.  Let
$$
\tau^{(1)},\ldots,\tau^{(t)}
$$
represent the distinct depth-zero cuspidal lines occurring in the
block, where $\tau^{(i)}$ is an irreducible depth-zero supercuspidal
representation of $G_{r_i}$.  Write
$$
N=\sum_{i=1}^t n_ir_i.
$$
The corresponding Bernstein datum has Levi subgroup
$$
M
=
\prod_{i=1}^t(G_{r_i})^{n_i}
$$
and supercuspidal representation
$$
\tau
=
\bigotimes_{i=1}^t
\bigl(\tau^{(i)}\bigr)^{\otimes n_i}.
$$

By the Zelevinsky classification, every irreducible representation in
this Bernstein block can be written
$$
\langle a\rangle
=
\langle a_1\rangle
\times\cdots\times
\langle a_t\rangle,
$$
where the cuspidal support of $a_i$ lies entirely on the cuspidal line
of $\tau^{(i)}$.

For each $i$, let $\tau_0^{(i)}$ be the finite cuspidal representation
associated with the depth-zero type of $\tau^{(i)}$, and put
$$
\rho_i
=
\bigl(\tau_0^{(i)}\bigr)^{\otimes n_i}.
$$
By the single-line theorem,
$$
\langle a_i\rangle^{K_{n_ir_i+}}
\simeq
\bigoplus_{\mu_i}
m_{\mu_i}^{(i)}
V_{\mu_i}^{(\rho_i)}.
$$

Taking $K_{N+}$-fixed vectors commutes with parabolic induction, so the
general decomposition is obtained by inducing the finite-group
representations occurring in these individual factors.  However, in
order to read the resulting expression as an irreducible decomposition,
one must first prove that these finite-group parabolic inductions are
irreducible.

For partitions $\mu_i\vdash n_i$, define
$$
V_{\mu_1,\ldots,\mu_t}
=
V_{\mu_1}^{(\rho_1)}
\times\cdots\times
V_{\mu_t}^{(\rho_t)}.
$$
We prove in Proposition~13.3 that every
$V_{\mu_1,\ldots,\mu_t}$ is irreducible, and that distinct tuples
$$
(\mu_1,\ldots,\mu_t)
$$
give pairwise non-isomorphic representations.  This irreducibility
statement is what allows the coefficients obtained after parabolic
induction to be interpreted as the actual multiplicities of
irreducible constituents.

\medskip
\noindent
\textbf{Main Theorem III: the complete depth-zero decomposition.}

Combining the single-line decompositions with the preceding
irreducibility result, Theorem~13.5 gives
$$
\langle a\rangle^{K_{N+}}
\simeq
\bigoplus_{\mu_1,\ldots,\mu_t}
\left(
\prod_{i=1}^t m_{\mu_i}^{(i)}
\right)
V_{\mu_1,\ldots,\mu_t}.
$$
In particular, the multiplicity of
$V_{\mu_1,\ldots,\mu_t}$ is exactly
$$
\prod_{i=1}^t m_{\mu_i}^{(i)}.
$$
Thus, after the irreducibility of the relevant finite parabolic
inductions has been established, distinct depth-zero cuspidal lines
contribute independently and the general multiplicity is the product
of the corresponding single-line multiplicities.

For each $i$, let $b_i$ be determined by
$$
\langle a_i\rangle
=
\operatorname{St}(\langle b_i\rangle).
$$
The possible constituents satisfy the componentwise dominance bounds
$$
\bigl(P(a_1)',\ldots,P(a_t)'\bigr)
\unlhd
(\mu_1,\ldots,\mu_t)
\unlhd
\bigl(P(b_1),\ldots,P(b_t)\bigr).
$$
As in the single-line case, the lower endpoint is determined by the
Zelevinsky segments of the original representation, whereas the upper
endpoint is determined by the Zelevinsky segments of its Aubert dual.
Both endpoint tuples occur with multiplicity one.

If $\langle a\rangle$ is generic, then each factor
$\langle a_i\rangle$ is generic.  Hence every
$m_{\mu_i}^{(i)}$ is a Kostka number, and the multiplicities in an
arbitrary depth-zero Bernstein block are products of Kostka numbers.

The preceding discussion also makes clear the distinction between the
two cases suggested by the structure of depth-zero representations.
When all factors lie on the same cuspidal line, the problem is governed
by one Hecke algebra $\mathcal H(S_n,q^r)$, and the main work is to
determine the corresponding single-line multiplicities.  When distinct
cuspidal lines occur, the individual single-line decompositions are
first computed separately; the irreducibility and pairwise
non-isomorphism of their finite parabolic inductions then allow these
answers to be combined multiplicatively.

\medskip
\noindent
\textbf{The main technical input.}

Although the final decomposition formulas are combinatorial, their
proof requires a precise comparison between several a priori different
Hecke algebra actions.

For a single depth-zero cuspidal line, type theory gives an affine
Hecke algebra of type $\widetilde A_{n-1}$ with parameter $q^r$,
containing a finite Hecke subalgebra
$$
\mathcal H(S_n,q^r).
$$
On the other hand, the corresponding finite Harish--Chandra series is
controlled by an endomorphism algebra isomorphic to the same finite
Hecke algebra.  A central step in the paper is to prove that these two
realizations induce the same normalized action on the relevant
multiplicity space.  This requires an explicit comparison of the Hecke
generators, their normalizations, and their quadratic relations.

We then prove that this identification is compatible with parabolic
induction.  In particular, Lemma~11.4 gives an explicit
$\mathcal H(S_n,q^r)$-module isomorphism realizing this compatibility.
Under this correspondence, the representations attached to individual
Zelevinsky segments give the trivial and sign modules of the relevant
finite Hecke algebras.

Products of segments therefore become parabolic inductions of finite
Hecke modules from Young subgroups.  Young's rule gives the
decomposition of the corresponding standard modules in terms of Kostka
numbers.  Arbitrary irreducible representations are then recovered by
inverting the unitriangular decomposition matrix of the Zelevinsky
classification.  Aubert duality gives the second, dual multiplicity
formula.

Thus the main chain of ideas in the single-line case is
$$
\text{Zelevinsky multisegments}
\longrightarrow
\text{standard modules}
\longrightarrow
K_{N+}\text{-fixed vectors}
\longrightarrow
\text{finite Harish--Chandra induction}
\longrightarrow
\text{finite Hecke induction}
\longrightarrow
\text{Young--Kostka combinatorics},
$$
followed by inversion of the Zelevinsky decomposition matrix.

Once the single-line problem has been solved, the compatibility of
$K_{N+}$-fixed vectors with parabolic induction reduces the general
depth-zero case to finite-group parabolic induction.  The
irreducibility theorem for products arising from distinct cuspidal
lines then completes the passage to arbitrary depth-zero Bernstein
blocks.

\medskip
\noindent
\textbf{Organization of the paper.}

We begin by fixing notation and recalling the structure of finite
Harish--Chandra series.  We then prove, for arbitrary unramified
connected reductive groups, that taking $K_+$-fixed vectors is
compatible with Aubert duality on the $p$-adic side and
Alvis--Curtis duality on the finite side.

We next specialize to general linear groups and recall the
depth-zero type theory and Zelevinsky classification needed for the
branching problem.  We compare the finite Hecke algebra action arising
from depth-zero type theory with the action arising from finite
Harish--Chandra theory, and prove that this comparison is compatible
with parabolic induction.

We then determine the complete hyperspecial decomposition on a single
depth-zero cuspidal line, first for generic representations and then for
arbitrary irreducible representations.  Finally, for distinct cuspidal
lines we prove the irreducibility and pairwise non-isomorphism of the
corresponding finite parabolic inductions and combine the single-line
decompositions.  This yields the complete decomposition of
$V^{K_{N+}}$, with multiplicities, for every irreducible depth-zero
representation $V$ of $\operatorname{GL}_N(F)$.

 We conclude with an explicit Iwahori-spherical example answering the special question at the end of Question\~2 of Prasad \cite{Prasad2025}, which also illustrates that our general multiplicity formulas can be used effectively for concrete computations: in this case, the multiplicities are obtained simply by counting standard Young tableaux with a prescribed descent set.

\section{Notation}
Throughout this paper, except in the duality section (where we prove a more general result), we will use $G_n$ to denote $\operatorname{GL}_n(F)$, where $F$ is a non-Archimedean field with residue field $\mathbb{F}_q$. Let $G^F_n = \operatorname{GL}_n(\mathbb{F}_q)$, where $F$ denotes the geometric Frobenius. Let $B_n$ denote the Borel subgroup of upper triangular matrices in $G_n$, and let $T_n$ denote the diagonal torus. We will also use $K_n$ to denote the maximal compact subgroup $\operatorname{GL}_n(O_F)$ and use $K_{n+}$ for its pro-unipotent radical. If $\lambda$ is a partition, we will use $\lambda'$ to denote its conjugate. Plus and minus of representations are in the Grothendieck group, and all induction is normalized.

\section{Irreducible components of Harish-Chandra series}
Suppose that $P^F = L^F U^F$ is a standard parabolic subgroup of $G^F_n$ and $(\rho, W)$ is an irreducible cuspidal representation of $L^F$.
\begin{definition}
Set $N_{G^F_n}(L^F, \rho) = \{ n \in N_{G^F_n}(L^F) \mid {}^n\rho \cong \rho \}$, and $W(L^F, \rho) = N_{G^F_n}(L^F, \rho) / L^F$.
\end{definition}
In \cite[Chapter 6]{DigneMichel2020}, the following result is proved.
\begin{theorem}
The functor $\Theta \colon M \longmapsto \operatorname{Hom}_{G^F_n}(\operatorname{Ind}_{P^F}^{G^F_n} \rho, M)$ induces a bijection between the irreducible subrepresentations of the Harish-Chandra series $\operatorname{Ind}_{P^F}^{G^F_n} \rho$ and the isomorphism classes of simple modules over the opposite ring
\[
H = \operatorname{End}_{G^F_n}(\operatorname{Ind}_{P^F}^{G^F_n} \rho)^{\mathrm{op}} .
\]
The $H$-module structure is given by $h \cdot f = f \circ h$ for $h \in H$ and $f \in \operatorname{Hom}_{G^F_n}(\operatorname{Ind}_{P^F}^{G^F_n} \rho, M)$.
\end{theorem}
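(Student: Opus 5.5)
This is the standard Hom-functor (Morita-type) equivalence for a semisimple finite-dimensional module. The plan is to prove it directly from complete reducibility over $\mathbb C$ and Schur's lemma; no fact about $\rho$ beyond finite-dimensionality is needed. Cuspidality only enters the finer description of $H$ as an Iwahori--Hecke algebra of $W(L^F,\rho)$, which is not needed here.

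First, set $I=\operatorname{Ind}_{P^F}^{G^F_n}\rho$. By Maschke's theorem, decompose
$$I\simeq \bigoplus_{i=1}^k M_i^{\oplus m_i},$$
with $M_1,\dots,M_k$ pairwise non-isomorphic irreducible representations. By Schur's lemma this gives
$$\operatorname{End}_{G^F_n}(I)\simeq \prod_{i=1}^k \operatorname{M}_{m_i}(\mathbb C).$$
Hence $H$ is a split semisimple algebra, and it has exactly $k$ isomorphism classes of simple modules.

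Second, for an irreducible $M$, check that $\Theta(M)=\operatorname{Hom}_{G^F_n}(I,M)$ is a right $\operatorname{End}(I)$-module under precomposition, i.e. a left $H$-module via $h\cdot f=f\circ h$. The associativity check is immediate: $(h_1h_2)_{H}$ corresponds to $h_2\circ h_1$ in $\operatorname{End}(I)$. Also $\Theta(M)\neq 0$ exactly when $M\simeq M_i$ for some $i$, and then $\dim\Theta(M_i)=m_i$.

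Third, identify $\Theta(M_i)$ with the simple module of the $i$-th matrix factor. Write $I=\bigoplus_i M_i\otimes \mathbb C^{m_i}$, so that $\operatorname{End}(I)=\prod_i\operatorname{End}(\mathbb C^{m_i})$. Then
$$\Theta(M_i)=\operatorname{Hom}(M_i\otimes\mathbb C^{m_i},M_i)\simeq(\mathbb C^{m_i})^{*},$$
on which the $i$-th factor acts through the transpose and the other factors act by zero. This is the unique simple module of $\operatorname{M}_{m_i}(\mathbb C)^{\mathrm{op}}$ extended by zero, so it is simple. The $\Theta(M_i)$ for distinct $i$ are non-isomorphic because they have distinct annihilators (they are supported on different central idempotents). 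Since the number of simple $H$-modules equals $k$, $\Theta$ is surjective onto isomorphism classes, and so it is a bijection.

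The only point needing care is bookkeeping of the opposite ring: one must confirm that $f\mapsto f\circ h$ is a left action of $H=\operatorname{End}^{\mathrm{op}}$ and not of $\operatorname{End}$. I expect this to be routine rather than a genuine obstacle.
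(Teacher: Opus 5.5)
Your argument is correct. Reading ``irreducible subrepresentations'' up to isomorphism, as you implicitly do, the Maschke--Schur analysis of $\operatorname{End}_{G_n^F}(I)$, with the opposite-ring bookkeeping handled correctly, gives exactly the stated bijection. The paper gives no proof of its own and only cites Digne--Michel, Chapter~6; your self-contained argument is the standard fact about endomorphism algebras of semisimple modules underlying that result, and you rightly note that cuspidality of $\rho$ matters only for the finer Hecke-algebra description of $H$.
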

Moreover, this algebra $H$ has a nice structure.
\begin{theorem}
We have $H \cong \mathcal{H}(W(L^F, \rho), {q^{c_s}}_{s \in S})$, where $S$ denotes the set of simple reflections of $W(L^F,\rho)$. The right-hand side denotes the finite Iwahori--Hecke algebra associated to the Coxeter group $W(L^F, \rho)$ and parameters $q^{c_s}$. Here $c_s$ are constants depending on the Harish-Chandra series.
\end{theorem}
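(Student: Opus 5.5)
The plan is the Howlett--Lehrer construction of an explicit basis of $H$ indexed by $W(L^F,\rho)$, followed by the quadratic and braid relations. Put $I=\operatorname{Ind}_{P^F}^{G^F_n}\rho$. By Mackey's formula, together with the cuspidality of $\rho$,
\[
\operatorname{End}_{G^F_n}(I)\simeq\bigoplus_{w\in P^F\backslash G^F_n/P^F}\operatorname{Hom}_{L^F\cap {}^wL^F}\bigl({}^w\rho,\rho\bigr),
\]
and only double cosets with ${}^wL^F=L^F$ and ${}^w\rho\simeq\rho$ contribute. Each of these contributes a one-dimensional space. Hence $\dim H=|W(L^F,\rho)|$.

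For each $w\in W(L^F,\rho)$ choose a representative $\dot w\in N_{G^F_n}(L^F,\rho)$ and an intertwiner $\rho(\dot w)\colon{}^{\dot w}\rho\to\rho$. Let $T_w$ be the operator on $I$ supported on $P^F\dot wP^F$; it is $f\mapsto\sum_{u}\rho(\dot w)f(\dot w^{-1}u\,\cdot)$, with $u$ running over $U^F\cap{}^{\dot w}U^{-,F}$. The $T_w$ form a basis of $H$. Next I would prove that $W(L^F,\rho)$ is a Coxeter group. For $\operatorname{GL}_n$ this is concrete. Write $L^F=\prod_i\operatorname{GL}_{r_i}(\mathbb F_q)^{n_i}$ and $\rho=\bigotimes_i\tau_i^{\otimes n_i}$ with the $\tau_i$ pairwise non-isomorphic. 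Then $W(L^F,\rho)\simeq\prod_iS_{n_i}$, and its simple reflections $s$ are the transpositions of adjacent equal blocks.

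Then I would establish the quadratic relation for each simple reflection $s$. The computation takes place inside the rank-one Levi $M_s\supset L^F$. For $\operatorname{GL}_n$ we have $M_s\simeq\operatorname{GL}_{2r}\times(\text{rest})$, and the relevant series is $\operatorname{Ind}(\tau\otimes\tau)$. Its endomorphism algebra is two-dimensional and spanned by $1$ and $T_s$. Hence $T_s^2=a_s T_s+b_s$. After normalizing $\rho(\dot s)$ (fixing a sign or scalar), one gets $(T_s-q^{c_s})(T_s+1)=0$. The value $q^{c_s}$ is the ratio of the degrees of the two constituents of $\operatorname{Ind}(\tau\otimes\tau)$, which here is $q^{r}$. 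For $\operatorname{GL}$ it can be read off from the degree formula for Deligne--Lusztig characters or from Green's theory. The multiplication rule $T_wT_{w'}=T_{ww'}$ when $\ell(ww')=\ell(w)+\ell(w')$ comes from a Bruhat-cell support argument, up to a scalar $\mu(w,w')$.

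The main obstacle is the cocycle. One must show that the intertwiners $\rho(\dot w)$ can be chosen multiplicatively along reduced expressions, so that $\mu\equiv1$ and the braid relations hold on the nose. In general this is where the Howlett--Lehrer 2-cocycle enters. For $\operatorname{GL}_n$, I would kill it directly: take $\dot w$ to be block permutation matrices and $\rho(\dot w)$ to be the tensor-factor permutations of $\tau^{\otimes n_i}$. These compose strictly, so the cocycle is trivial. With the quadratic relations, the braid relations and the dimension count in hand, the presentation of $\mathcal H(W(L^F,\rho),\{q^{c_s}\})$ gives a surjection onto $H$, and comparing dimensions makes it an isomorphism. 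Passing to the opposite algebra is harmless, since a Hecke algebra is isomorphic to its opposite via $T_w\mapsto T_{w^{-1}}$. For general finite reductive groups I would instead simply cite \cite[Chapter 6]{DigneMichel2020}.
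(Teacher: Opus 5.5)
Your proposal is correct in outline and follows the Howlett--Lehrer argument that the paper does not reproduce but only cites (Digne--Michel, Chapter~6, and Howlett--Lehrer, Example~4.15, for the parameter $q^r$); your way of trivializing the cocycle, using block permutation matrices with $I_r$ blocks and tensor-factor permutations, is exactly the multiplicative system $\gamma_w$ the paper fixes right after the theorem, so the approach is essentially the same. The one point to tighten is that your hand argument for $\operatorname{GL}_n$ tacitly assumes that the blocks carrying the same cuspidal are consecutive. For an interleaved standard Levi such as $\operatorname{GL}_1\times\operatorname{GL}_2\times\operatorname{GL}_1$ with $\chi\otimes\sigma\otimes\chi$, the Coxeter generator swaps non-adjacent blocks, and the rank-one Levi $M_s$ is not the Levi of any parabolic containing $P^F$. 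Your induction-in-stages reduction of the quadratic relation then breaks down, so you must either invoke independence of Harish-Chandra induction from the parabolic (to regroup the blocks) or fall back on the general citation; this is harmless here, since the paper only ever uses the grouped case.
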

Since there is a one-to-one correspondence between simple Hecke algebra modules and irreducible representations of the associated Coxeter group, we will use the notation $V^{(\rho)}_\lambda$ to denote the irreducible component of $\operatorname{Ind}_{P^F}^{G^F_n} \rho$ associated to the irreducible representation $\lambda$ of the Coxeter group $W(L^F,\rho)$. When the context is clear, we will omit the superscript $(\rho)$ for simplicity.
For a general Harish-Chandra series, $c_s$ is not easy to compute, but we will mainly focus on a very special case in which everything becomes explicit.
We will consider $G^F_{nr}$ and take $L^F$ to be the Levi subgroup of the form $(G^F_r)^n$ (i.e., $n$ copies of $G^F_r$). Let $(\delta, W_0)$ be an irreducible cuspidal representation of $G^F_r$, and let $\rho$ be the irreducible cuspidal representation of $L^F$ obtained by tensoring $\delta$ with itself $n$ times (so $\rho \cong \delta^{\otimes n}$). In \cite{HowlettLehrer1980}, Example~4.15, they show that in the above case, all parameters are equal and equal to $q^r$.
Let us study more closely the structure of $H$ in this special case.
\begin{lemma}
For every $n \in N_{G^F_{nr}}(L^F, \rho)$, there exists a vector space isomorphism $\gamma_n \colon W \to W$ such that for any $l \in L^F$ and $v \in W$,
\[
\gamma_n(\rho(l)v) = \rho(n^{-1}ln) \gamma_n(v).
\]
\end{lemma}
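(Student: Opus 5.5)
This is essentially the definition of ${}^n\rho\cong\rho$. The plan is to unwind that definition and then name the intertwiner. By definition of $N_{G^F_{nr}}(L^F,\rho)$, the conjugate representation ${}^n\rho$ is isomorphic to $\rho$. We fix the convention that ${}^n\rho$ is the representation of $L^F$ on the same space $W$ given by $({}^n\rho)(l)=\rho(n^{-1}ln)$. This makes sense because $n$ normalizes $L^F$, so $n^{-1}ln\in L^F$. We should state this convention explicitly at the outset, since the direction of conjugation determines which side $n^{-1}ln$ appears on in the identity.

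With that convention, the hypothesis ${}^n\rho\cong\rho$ means there is an $L^F$-equivariant linear isomorphism $\gamma_n\colon (\rho,W)\to({}^n\rho,W)$, that is,
\[
\gamma_n\circ\rho(l)=({}^n\rho)(l)\circ\gamma_n=\rho(n^{-1}ln)\circ\gamma_n\qquad\text{for all } l\in L^F.
\]
Evaluating at $v\in W$ gives exactly $\gamma_n(\rho(l)v)=\rho(n^{-1}ln)\gamma_n(v)$, which is the claimed identity.

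Suppose instead the paper's convention is ${}^n\rho(l)=\rho(nln^{-1})$. Then an isomorphism $\phi\colon\rho\to{}^n\rho$ satisfies $\phi\rho(l)=\rho(nln^{-1})\phi$. In this case we take $\gamma_n=\phi^{-1}$. Substituting $l\mapsto n^{-1}ln$ gives $\phi\,\rho(n^{-1}ln)=\rho(l)\phi$, hence $\rho(n^{-1}ln)\phi^{-1}=\phi^{-1}\rho(l)$, which is again the desired identity.

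There is no real obstacle here. The only points needing care are that conjugation by $n$ preserves $L^F$, so that ${}^n\rho$ is a representation of $L^F$ at all, and the bookkeeping of the conjugation direction. Because $\rho$ is irreducible, Schur's lemma shows that $\gamma_n$ is unique up to a nonzero scalar. This is worth recording, since the later construction of the Hecke generators will presumably normalize $\gamma_n$.
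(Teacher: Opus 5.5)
Your proof is correct and follows the paper's own argument, which consists of the single line that the statement follows from ${}^n\rho\cong\rho$. You unwind that definition and check that either conjugation convention gives the stated identity (taking $\gamma_n=\phi^{-1}$ in the second case), and your remark that $\gamma_n$ is unique up to a scalar matches the paper's comment right after the lemma.
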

\begin{proof}
This follows since ${}^n\rho \cong \rho$.
\end{proof}
Of course, $\gamma_n$ is not unique; it is only unique up to a scalar. 
For our special Levi, we can also fix a multiplicative system of representatives of $W(L^F,\rho)$ (for example, let $n_w$ be the block matrix with entries $0$ or $I_r$). And choose $\gamma_w$ to be the corresponding permutation on tensor factors. So it will not matter if we write $\gamma_w$ for $w \in W(L^F,\rho)$. Thus we have $\gamma_{w'w}=\gamma_w \gamma_{w'}$.

\cite{HowlettLehrer1980} also gives us the explicit action of $\{ T_w \mid w \in W(L^F,\rho) \}$, where $T_w$ is the canonical basis associated with this Iwahori--Hecke algebra $H$.
We must point out that here we use the opposite ring, so the explicit formula will be slightly different from that in their book.
Suppose that $P^F = L^F U^F$ is the standard parabolic, and in our special case, $W(L^F,\rho) \cong S_n$. Let $e_{U^F} = |U^F|^{-1} \sum_{u \in U^F} u$.
For $g e_{U^F} \otimes v \in \operatorname{Ind}_{P^F}^{G^F_{nr}}(\rho)$, (The index comes from Example~4.15 of \cite{HowlettLehrer1980})
$$
T_w (g e_{U^F} \otimes v) = q^{{(r^2+r)\ell(w)/2} } g e_{U^F} w e_{U^F} \otimes \gamma_w (v).
$$
In particular, for a simple reflection $s$,
$$
T_s^2 = (q^r - 1) T_s + q^r.
$$
Next, suppose $(\pi,V)$ is an irreducible representation of $G_{nr}^F$ lying in the Harish-Chandra series $\operatorname{Ind}_{P^F}^{G_{nr}^F}(\rho)$. We will explore how $T_s$ acts on the $H$-module $\operatorname{Hom}_{G_{nr}^F}(\operatorname{Ind}_{P^F}^{G_{nr}^F}(\rho),V) \cong \operatorname{Hom}_{P^F}(\rho,V)$. For a map $\Phi$ on the left-hand side, let $\Phi_0$ be the corresponding map on the right-hand side via Frobenius reciprocity. We have the formula
$$
\Phi(g e_{U^F} \otimes v) = g e_{U^F} \Phi_0(v)=g\Phi_0(v).
$$
Thus, for $v \in W$, we obtain
\begin{align*}
T_s * \Phi_0(v) &= \Phi\bigl(T_s(e_{U^F} \otimes v)\bigr) \\
&= \Phi\bigl(q^{(r^2+r)/2 }(e_{U^F} s e_{U^F} \otimes \gamma_s(v))\bigr) \\
&= q^{(r^2+r)/2} e_{U^F} s  \Phi_0(\gamma_s(v)).
\end{align*}

\section{Dualities}
Throughout this section, we assume that $G$ is an unramified algebraic group defined over a non-Archimedean field $F$, $S$ is a maximal split torus of $G$, $K$ is a hyperspecial subgroup of $G$ in good position, $K_+$ is its unipotent radical, $V$ is an irreducible representation of $G$, and $\Phi(G,S)$ is the root system of $G$ with respect to $S$. Choose a minimal parabolic $P_0$ of $G$; this determines a set of simple roots, which we denote by $\Delta$. The quotient $K/K_+$ is a finite Lie group over the residue field $\mathbb{F}_q$ of $F$. By standard Bruhat--Tits theory, we can find a maximal $\mathbb{F}_q$-split torus $S_1$ of $K/K_+$ corresponding to $S$, and the root system $\Phi(K/K_+,S_1)$ is naturally the same as the non-divisible part of $\Phi(G,S)$. We choose an Iwahori subgroup $I$ contained in $K$; this Iwahori subgroup determines a set of simple roots of $\Phi(K/K_+,S_1)$, denoted by $\Delta_1$. Moreover, we can choose a suitable Iwahori subgroup such that $\Delta_1$ is compatible with $\Delta$ (see, for example, 4.1.22 of \cite{KalethaPrasad2023}). Taking $K_+$-fixed vectors, we regard $V^{K_+}$ as a representation of the finite Lie group $K/K_+$.
\\In this section, we show that taking $K_+$-fixed vectors commutes with Aubert duality (resp.\ Curtis duality).
To begin with, we recall some definitions.
\begin{definition}
For $J \subset \Delta$, let $P_J = L_J U_J$ be the standard parabolic subgroup with its Levi decomposition, and let $i_{L_J}^{G}$ and $r_{L_J}^{G}$ denote the corresponding parabolic induction and Jacquet functor, respectively. We define an operator $St_0$ on the Grothendieck group of $G$ as follows:
$$St_0=\sum_{J \subset \Delta}(-1)^{|J|}i_{L_J}^{G} r_{L_J}^{G}.$$
\end{definition}
It is well known that this operator sends an irreducible representation to another irreducible representation up to a sign. After multiplying by a suitable sign, we obtain a duality; we call it Aubert duality and denote it by $\operatorname{St}$.
\begin{definition}
 Suppose $H$ is a finite Lie group with a geometric Frobenius $F$. Choose a Borel subgroup $B$ of $H$, and use $i_{L}^{H}$ and $r_{L}^{H}$ to denote the Harish-Chandra induction and restriction with respect to a standard parabolic $P=LU$ containing $B$. Then we define an operator:
$$D_0=\sum_{B \subset P=LU}(-1)^{r(P)}i_{L}^{H} r_{L}^{H}.$$
\end{definition}
Again this operator sends an irreducible representation to another irreducible representation up to a sign. After multiplying by a suitable sign, we denote it by $\operatorname{D}$ and call it Curtis duality. For a general representation, we extend Curtis duality additively by applying it to each irreducible constituent, with multiplicities preserved.

In \cite{MoyPrasad1996} 6.3, a Levi subgroup associated to a parahoric subgroup is constructed. More precisely, suppose that $x$ is a point in the apartment $A(S)$, $G_x$ is the associated parahoric subgroup, and $G_{x+}$ is its pro-unipotent radical. We also assume that $I \subset G_x \subset K$, so that $S_1 \subset G_x/G_{x+}$. Let $C_1$ be the maximal $\mathbb{F}_q$-split torus in the center of $G_x/G_{x+}$. Lift $C_1$ to $S$ to obtain a subtorus $C$ of $S$, and set $M = Z_G(C)$. By standard theory, $M$ is a Levi subgroup; we call $M$ the Levi subgroup associated to the parahoric subgroup $G_x$.

Using this construction, for any standard parabolic subgroup of $K/K_+$ corresponding to a subset $J \subset \Delta_1$, which is of the form $G_x/K_+$ with Levi factor $G_x/G_{x+}$, we associate a standard parabolic subgroup $P_J = L_J U_J$ of $G$.
We recall the following theorem (see \cite[Theorem 1]{MishraRoesner2017}):
\begin{theorem}
 Suppose that $P = MN$ is a standard parabolic subgroup of $G$, and $V$ is a representation of $M$. Then there is a natural isomorphism (as representations of $K/K_+$):
$$
(\operatorname{Ind}_P^G V)^{K_+}  \cong \operatorname{Ind}_{(P \cap K)/(P \cap K_+)}^{K/K_+} (V^{M \cap K_+}).
$$
\end{theorem}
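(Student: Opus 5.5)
The plan is to realize both sides as spaces of functions on $K$ and compare them directly. There are three steps: an Iwasawa-type restriction to $K$, taking $K_+$-invariants, and identifying the resulting space with finite Harish--Chandra induction.

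\emph{Step 1: restriction to $K$.} Since $K$ is hyperspecial and in good position with respect to $S$ and $P_0$, the Iwasawa decomposition $G=PK$ holds. Hence restriction of functions gives a $K$-equivariant isomorphism
$$
(\operatorname{Ind}_P^G V)|_K \;\cong\; \operatorname{Ind}_{P\cap K}^{K}\bigl(V|_{P\cap K}\bigr),
$$
where the right side is the space of functions $f\colon K\to V$ with $f(pk)=p\cdot f(k)$, smooth under right translation. The normalization causes no trouble: $\delta_P^{1/2}$ is a positive character, and it is trivial on the compact group $P\cap K$. In the same step I will record how $P\cap K$ acts on $V$. It acts through $P\to M$, so $N\cap K$ acts trivially.

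\emph{Step 2: $K_+$-invariants.} Take $f$ in the right-hand side with $f(kk_+)=f(k)$ for all $k_+\in K_+$. Such an $f$ factors through $K/K_+$. Because $K_+$ is normal in $K$, for $x\in P\cap K_+$ we have
$$
x\cdot f(k)=f(xk)=f\bigl(k\,(k^{-1}xk)\bigr)=f(k).
$$
So every value $f(k)$ lies in $V^{P\cap K_+}$. Next I will use the Iwahori-type factorization
$$
P\cap K_+=(M\cap K_+)(N\cap K_+),
$$
which follows from the Moy--Prasad filtration description of $K_+$ via root subgroups and the torus. Since $N\cap K_+$ acts trivially, this gives $V^{P\cap K_+}=V^{M\cap K_+}$. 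Conversely, any function $K/K_+\to V^{M\cap K_+}$ satisfying the $P\cap K$-equivariance defines an element of the left side.

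\emph{Step 3: identification with finite induction.} The space obtained in Step 2 consists of functions on $K/K_+$ that are left-equivariant for the image $(P\cap K)K_+/K_+\cong (P\cap K)/(P\cap K_+)$. The action on the values is through $V^{M\cap K_+}$, on which $P\cap K$ acts via $(M\cap K)/(M\cap K_+)$. This is exactly $\operatorname{Ind}_{(P\cap K)/(P\cap K_+)}^{K/K_+}(V^{M\cap K_+})$, which is the Harish--Chandra induction inflated through the unipotent radical. Naturality in $V$ is clear, since every map used is restriction or evaluation of functions.

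The main obstacle is the structural input in Steps 2 and 3. One must check that $P\cap K_+$ factors as $(M\cap K_+)(N\cap K_+)$. One must also check that $(P\cap K)/(P\cap K_+)$ is precisely the standard parabolic of $K/K_+$ matching $P$, with Levi $(M\cap K)/(M\cap K_+)$ and unipotent radical the image of $N\cap K$. For this I would use Bruhat--Tits theory for the hyperspecial point in the apartment $A(S)$: $K$ and $K_+$ are generated by the torus filtration pieces and the affine root subgroups $U_{\alpha,0}$ and $U_{\alpha,0+}$. These decompose compatibly with the root decomposition of $P=MN$ (cf.\ 4.1.22 of \cite{KalethaPrasad2023}).
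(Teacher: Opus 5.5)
Your proof is correct, but it does not follow the paper, which gives no argument for this statement and simply quotes Theorem~1 of \cite{MishraRoesner2017}. You prove it directly in three moves:
\begin{itemize}
\item The Iwasawa decomposition $G=PK$ holds because $K$ is special and in good position with respect to $P_0\subseteq P$. It identifies $(\operatorname{Ind}_P^G V)|_K$ with $\operatorname{Ind}_{P\cap K}^K(V)$.
\item Since $K_+$ is normal in $K$, the values of a $K_+$-invariant function are forced into $V^{P\cap K_+}$.
\item The factorization $P\cap K_+=(M\cap K_+)(N\cap K_+)$ identifies $V^{P\cap K_+}$ with $V^{M\cap K_+}$.
\end{itemize}

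Your route buys transparency. It needs no admissibility of $V$, unlike the Moy--Prasad input used alongside this theorem in Theorem~4.5. It also shows exactly where the hyperspecial hypothesis enters: $P\backslash G/K$ is a single double coset, so Mackey theory contributes only one term. Finally, it isolates the same kind of Iwahori-decomposition input that the paper invokes without proof in Theorem~4.5.

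One refinement: for the isomorphism exactly as stated, you do not need $P\cap K=(M\cap K)(N\cap K)$. Since $P\cap K_+$ is normal in $P\cap K$, the space $V^{P\cap K_+}$ is automatically $(P\cap K)$-stable. That second factorization, together with the Bruhat--Tits description of the reductive quotient, is needed only to read the right-hand side as Harish--Chandra induction from the standard parabolic of $K/K_+$ with Levi $(M\cap K)/(M\cap K_+)$. This is how the paper uses the theorem in Sections~7 and~13.

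The citation is shorter on the page but leaves all of this structural input implicit.
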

We also need a deep result due to Moy and Prasad; see \cite[Proposition 6.7]{MoyPrasad1996}.
\begin{theorem}
Suppose $G_x$ is a non-maximal parahoric subgroup of $G$ with $I \subset G_x \subset K$, such that $G_x/K_+ = (G_x/G_{x+})(G_{x+}/K_+)$ is a standard parabolic subgroup of $K/K_+$, and let $P=MN$ be the associated parabolic constructed above. Suppose $V$ is an admissible representation of $G$. Then there is a canonical isomorphism:
$$V^{G_{x+}} \cong r_{M}^{G}(V)^{G_{x+} \cap M}.$$
\end{theorem}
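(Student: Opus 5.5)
The plan is to follow the Jacquet--Casselman method. The two inputs are an Iwahori factorization of $G_{x+}$ with respect to $P=MN$ and a strictly $P$-positive central element of $M$ that acts invertibly on the $G_{x+}$-fixed vectors. Let $N^-$ be the unipotent radical of the opposite parabolic.

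\textbf{Step 1: Iwahori factorization.} Using the Bruhat--Tits description of $G_{x+}$ as generated by $S$-torus filtration pieces and affine root subgroups $U_{\psi}$ with $\psi(x)>0$, I would show
\[
G_{x+}=(G_{x+}\cap N^-)(G_{x+}\cap M)(G_{x+}\cap N).
\]
I would also show that $G_{x+}\cap M=M_{x+}$ is the pro-unipotent radical of the parahoric $M_x$ of $M$. By construction $M=Z_G(C)$, where $C_1$ is the split centre of $G_x/G_{x+}$. So the roots of $M$ are exactly those affine roots whose gradient $\alpha$ satisfies $\alpha(x)\in\mathbb Z$ modulo the chosen normalisation. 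This gives both claims.

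\textbf{Step 2: the natural map.} I would define $p\colon V^{G_{x+}}\to V_N$ as the composite of the inclusion and the projection to the Jacquet module. Its image lies in $(V_N)^{M_{x+}}$, by $M$-equivariance of the projection. Casselman's lemma for the factorized group $G_{x+}$ then shows that $p$ is surjective onto $(V_N)^{M_{x+}}$. This uses admissibility of $V$ and the existence of $a\in Z(M)$, with $a$ taken from $C$, contracting $N$ under conjugation $n\mapsto ana^{-1}$.

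The same lemma shows that $p$ restricted to the image of $T_a^k$ is injective for $k\gg0$, where $T_a=e_{G_{x+}}\pi(a)e_{G_{x+}}$. So the theorem reduces to showing that $T_a$ is invertible on $V^{G_{x+}}$.

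\textbf{Step 3: invertibility of $T_a$ (main obstacle).} I would first try to prove the identity
\[
T_aT_{a^{-1}}=[G_{x+}:G_{x+}\cap aG_{x+}a^{-1}]^{-1}\cdot(\text{something invertible in }H(G,G_{x+})).
\]
To get it, I would exploit the following two facts.
\begin{itemize}
\item The element $a$ normalizes $G_x$ modulo $G_{x+}$.
\item The quotients $(G_{x+}\cap N)/(a(G_{x+}\cap N)a^{-1})$, and similarly for $N^-$, are controlled by the finite unipotent radical of the parabolic $G_x/K_+$ of $K/K_+$. This works because $G_x/K_+=(G_x/G_{x+})(G_{x+}/K_+)$ is a parabolic of $K/K_+$, so $G_{x+}\cap N^-\subseteq K_+$ and $G_{x+}\cap N$ surjects onto the unipotent radical $G_{x+}/K_+$.
\end{itemize}

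If that approach stalls, my fallback is to choose $a$ minimal, so that $aG_{x+}a^{-1}$ and $G_{x+}$ generate a group in which $G_{x+}$ has the same $G_{x+}$-fixed vectors. I would then show directly that $e_{G_{x+}}\pi(a)e_{G_{x+}}$ is a unit in the Hecke algebra $H(G,G_{x+})$. This is the Moy--Prasad argument via their ``$G_{x+}$-associate'' comparison.

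Once $T_a$ is invertible, $p$ is injective on all of $V^{G_{x+}}$. Together with the surjectivity from Step 2, this gives the canonical isomorphism $V^{G_{x+}}\cong r_M^G(V)^{G_{x+}\cap M}$, where the normalization twist is irrelevant for fixed vectors.
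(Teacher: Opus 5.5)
The paper gives no proof of this statement; it quotes it from Moy--Prasad (Proposition 6.7 of their 1996 paper), so your argument has to stand on its own. There is a genuine gap: your Step 3 is only a plan.

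Your Steps 1 and 2 are the standard Jacquet--Casselman reduction and are fine. The Iwahori factorization of $G_{x+}$ with respect to $P$, together with Jacquet's lemma, gives surjectivity of $p\colon V^{G_{x+}}\to (V_N)^{G_{x+}\cap M}$. For admissible $V$, Casselman's lemma identifies $\ker p$ with the generalized $0$-eigenspace of $T_a$. This reduces the theorem to its whole content, namely the invertibility of $T_a$ on $V^{G_{x+}}$, and that is what is not proved.

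Steps 1 and 2 only use an Iwahori factorization and a contracting central element. Both also exist when $M$ is \emph{not} the Levi subgroup associated to $G_x$, and then the conclusion is false. In $\operatorname{GL}_n$, $K_+$ has an Iwahori factorization with respect to $B=TN$, and $a=\operatorname{diag}(\varpi^{n-1},\dots,\varpi,1)$ contracts $N$. But for a depth-zero supercuspidal $V$ one has $V^{K_+}\neq 0$ and $V_N=0$, so $T_a$ is nilpotent on $V^{K_+}$. Any valid Step 3 must therefore use, essentially, that no root of $N$ or $N^-$ survives in $G_x/G_{x+}$, i.e.\ $G_x=M_xG_{x+}$.

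Your Step 3 does not supply this.
\begin{itemize}
\item The displayed formula for $T_aT_{a^{-1}}$, with an unspecified ``something invertible'', restates the claim rather than reducing it.
\item The first bullet is incorrect: $a$ normalizes neither $G_x$ nor $G_{x+}$. In fact $aG_{x+}a^{-1}=G_{ax+}$ for a different point $ax$ of $x+X_*(C)\otimes\mathbb R$. What is true is that $a$ centralizes $M_x$, so $T_a$ commutes with $G_x/G_{x+}$.
\item The second bullet identifies $(G_{x+}\cap N)/a(G_{x+}\cap N)a^{-1}$ with the finite unipotent radical only if conjugation by $a$ shrinks every root group of $N$ by exactly one filtration step. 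That is impossible once $N$ contains roots $\alpha,\beta,\alpha+\beta$.
\item The fallback only names the Moy--Prasad argument without giving it.
\end{itemize}

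One way to close the gap is as follows.
\begin{enumerate}
\item On $V^{G_{x+}}$ one has $T_a=e_{G_{x+}}\circ\pi(a)$, and $\pi(a)$ maps $V^{G_{x+}}$ isomorphically onto $V^{G_{ax+}}$. So it suffices that $e_{G_{x+}}\colon V^{G_{ax+}}\to V^{G_{x+}}$ is bijective.
\item Choose points $x=y_0,y_1,\dots,y_m=ax$ on the segment from $x$ to $ax$ such that between $y_i$ and $y_{i+1}$ there is exactly one point $z_i$ where some affine root with gradient a root of $N$ vanishes.
\item The affine roots of $N$ and of $N^-$ are monotone along the segment. Hence $G_{y+}=(G_{y+}\cap G_{u+})(G_{y+}\cap G_{w+})$ whenever $u,y,w$ lie on the segment in this order. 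It follows that $e_{G_{x+}}e_{G_{ax+}}=e_{G_{y_0+}}e_{G_{y_1+}}\cdots e_{G_{y_m+}}$.
\item Each factor $V^{G_{y_{i+1}+}}\to V^{G_{y_i+}}$ is the map $w\mapsto e_{\mathsf U}w$ from $W^{\bar{\mathsf U}}$ to $W^{\mathsf U}$. Here $W=V^{G_{z_i+}}$, and $\mathsf U=G_{y_i+}/G_{z_i+}$, $\bar{\mathsf U}=G_{y_{i+1}+}/G_{z_i+}$ are opposite unipotent radicals in $G_{z_i}/G_{z_i+}$ with common Levi the image of $M_x$.
\item That map is an isomorphism by the finite-group theorem of Howlett--Lehrer and Dipper--Du on independence of Harish-Chandra restriction from the parabolic.
\end{enumerate}
This is where the association hypothesis enters, and it is the idea missing from your proposal.
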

Using the above two theorems, we obtain the following result:
\begin{theorem}
 With the above notation,
$$(\operatorname{St}(V))^{K_+}= \operatorname{D}(V^{K_+}).$$
\end{theorem}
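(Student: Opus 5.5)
The plan is to prove the identity first for the unnormalized operators, $(St_0(V))^{K_+}=D_0(V^{K_+})$ in the Grothendieck group of $K/K_+$, and then compare signs. Taking $K_+$-fixed vectors is exact on smooth representations, since $K_+$ is pro-$p$ and we work over $\mathbb C$. It therefore induces an additive map on Grothendieck groups, and it suffices to match the two alternating sums term by term. The key indexing fact is that $\Delta_1$ was chosen compatible with $\Delta$. Hence subsets $J\subset\Delta$ correspond bijectively to standard parabolics $P_J^F=L_J^FU_J^F$ of $K/K_+$ containing the Borel image of $I$. Each such parabolic has the form $G_x/K_+$ for a parahoric $G_x$ with $I\subset G_x\subset K$, and by the Moy--Prasad construction recalled above the associated Levi is exactly $L_J$. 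I will also check that $r(P_J^F)$ (the semisimple rank, or the corank, whichever convention the definition of $D_0$ uses) and $|J|$ have the same parity, or differ by a fixed global sign. That global sign is absorbed in the final normalization.

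For a fixed $J$, I compute $(i_{L_J}^G r_{L_J}^G V)^{K_+}$. By the theorem of Mishra--Roesner this equals $\operatorname{Ind}_{P_J^F}^{K/K_+}\bigl((r_{L_J}^G V)^{L_J\cap K_+}\bigr)$. Here $(P_J\cap K)/(P_J\cap K_+)$ is identified with $P_J^F$, and the inflated module only sees $L_J^F$. By the Moy--Prasad theorem, $(r_{L_J}^GV)^{L_J\cap K_+}\cong V^{G_{x+}}$ with $G_{x+}\supset K_+$. On the finite side, $V^{G_{x+}}=(V^{K_+})^{G_{x+}/K_+}=(V^{K_+})^{U_J^F}$, which is the Harish--Chandra restriction $r_{L_J^F}^{K/K_+}(V^{K_+})$. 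This uses that $G_{x+}/K_+$ is the unipotent radical $U_J^F$ of $P_J^F$. Combining the two identifications gives
\[
(i_{L_J}^G r_{L_J}^G V)^{K_+}\cong i_{L_J^F}^{K/K_+} r_{L_J^F}^{K/K_+}(V^{K_+}).
\]
Summing over $J$ with signs yields the unnormalized identity. The case $J=\Delta$ (the trivial term, $G_x=K$) is tautological, and the case $J=\emptyset$ corresponds to $G_x=I$.

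\textbf{Main obstacle: equivariance of the isomorphism.} I must ensure that the Moy--Prasad isomorphism is $L_J^F$-equivariant, where $L_J^F=G_x/G_{x+}\cong (L_J\cap K)/(L_J\cap K_+)$. Only then does the Mishra--Roesner induction receive the same $L_J^F$-module as the finite Harish--Chandra restriction. I would try to verify this from the canonical nature of the Moy--Prasad map: it is the Jacquet projection restricted to $G_{x+}$-invariants, and this projection commutes with $M\cap G_x$. I would also need the two identifications of $M\cap G_x$ modulo $M\cap G_{x+}$ with $G_x/G_{x+}$ to agree.

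\textbf{Sign normalization.} If $V$ is irreducible, then $St_0(V)=\pm\operatorname{St}(V)$ with $\operatorname{St}(V)$ irreducible. Hence $D_0(V^{K_+})=\pm\operatorname{St}(V)^{K_+}$ is, up to a global sign, an honest representation. Since $D_0$ acts on each irreducible constituent $\sigma$ of $V^{K_+}$ by $\varepsilon_\sigma D(\sigma)$ with $\varepsilon_\sigma=\pm1$, the extension of $D$ by constituents must be compared with this sign. The difficulty is that different constituents might a priori carry different signs $\varepsilon_\sigma$. In the finite setting, $\varepsilon_\sigma=(-1)^{r(L)}$, where $L$ is the Levi of the cuspidal support of $\sigma$. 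On the $p$-adic side, the sign is likewise $(-1)$ to the rank attached to the cuspidal support of $V$. I would argue that all constituents of $V^{K_+}$ have cuspidal support of the same rank, determined by the depth-zero cuspidal support of $V$, and that their common sign equals the Aubert sign. Then $D_0(V^{K_+})$ is a genuine representation up to one global sign, which matches the Aubert sign, and the identity $(\operatorname{St}(V))^{K_+}=\operatorname{D}(V^{K_+})$ follows. If this uniformity cannot be shown in general, the fallback is to note that positivity forces it: $\operatorname{St}(V)^{K_+}$ is an effective class, so every $\varepsilon_\sigma$ must equal the same global sign.
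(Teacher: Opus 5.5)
Your proposal is correct and follows essentially the same route as the paper. You match the alternating sums term by term, using Mishra--Roesner for induction and Moy--Prasad for the Jacquet functor, with $V^{G_{x+}}=(V^{K_+})^{G_{x+}/K_+}$ identified with finite Harish--Chandra restriction, and you settle the signs by positivity; that fallback is exactly the paper's argument. You are more careful than the paper about exactness of $K_+$-invariants and about the $L_J^F$-equivariance of the Moy--Prasad isomorphism. Conversely, the paper makes explicit one identification you use silently: $G_{x+}\cap L_J=L_J\cap K_+$, which is needed to rewrite $(r_{L_J}^G V)^{G_{x+}\cap L_J}$ as $(r_{L_J}^G V)^{L_J\cap K_+}$.
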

\begin{proof}
By the definitions, it suffices to prove that taking $K_+$-fixed vectors commutes with parabolic induction and with the Jacquet functor. The sign issue is harmless, since positivity forces the signs on both sides to agree.

The first theorem of this section tells us that taking $K_+$-fixed vectors commutes with parabolic induction. When $J=\Delta$ there is nothing to prove. Now suppose that $G_x$ is a non-maximal parahoric subgroup of $G$ (with $I \subset G_x \subset K$) such that $G_x/K_+$ is a standard parabolic subgroup of $K/K_+$, associated to a subset $J \subset \Delta_1$. The construction above shows that the associated parabolic of $G_x$ is the standard parabolic of $G$ associated to the same subset $J \subset \Delta$. Denote it by $P=MU$. Observe that
$$r_{G_x/G_{x+}}^{K/K_+}(V^{K_+}) = (V^{K_+})^{G_{x+}/K_+} = V^{G_{x+}}.$$
By the second theorem (Moy--Prasad), $V^{G_{x+}} \cong (r_{M}^{G}(V))^{G_{x+} \cap M}$. Since $M$ and $G_x$ are associated, one easily verifies $G_{x+} \cap M = M \cap K_+$ via root systems and the Iwahori decomposition, hence
$$(r_{M}^{G}(V))^{G_{x+} \cap M} = (r_{M}^{G}(V))^{M \cap K_+}.$$
Thus taking $K_+$-fixed vectors commutes with the Jacquet functor, and the desired equality follows.
\end{proof}

\section{Simple types}
Let $\mathfrak{R}(G)$ be the category of smooth complex representations of a $p$-adic reductive group $G$. We briefly recall the Bernstein decomposition and the notion of types due to Bushnell and Kutzko, focusing on the general linear group.

Denote by $\mathfrak{B}(G)$ the set of inertial equivalence classes of supercuspidal pairs $(L,\tau)$, where $L$ is a Levi subgroup of $G$ and $\tau$ is an irreducible supercuspidal representation of $L$. For each $\mathfrak{s} \in \mathfrak{B}(G)$, the corresponding Bernstein component $\mathfrak{R}^{\mathfrak{s}}(G)$ is the full subcategory of $\mathfrak{R}(G)$ consisting of representations all of whose irreducible subquotients appear as composition factors of induced representations $\operatorname{Ind}_P^G(\tau \otimes \chi)$ for some unramified character $\chi$ of $L$ and some parabolic subgroup $P$ with Levi factor $L$. Two pairs $(L,\tau)$ and $(L',\tau')$ are inertially equivalent if they determine the same component. The Bernstein decomposition gives a direct product decomposition
$$
\mathfrak{R}(G) \simeq \prod_{\mathfrak{s} \in \mathfrak{B}(G)} \mathfrak{R}^{\mathfrak{s}}(G).
$$

Now fix an inertial class $\mathfrak{s}$. Let $K$ be an open compact subgroup of $G$, and let $\rho$ be a smooth finite-dimensional representation of $K$. The Hecke algebra $\mathcal{H}(G,\rho)$ is defined as the algebra of compactly supported $\operatorname{End}(\rho^\vee)$-valued functions $f$ on $G$ satisfying
$$
f(kgk') = \rho^\vee(k) f(g) \rho^\vee(k') \quad (k,k' \in K).
$$
For a smooth representation $(\pi, V)$ of $G$, put
$$
V_\rho := \operatorname{Hom}_K(\rho, V) ,
$$
which carries a natural structure of a left $\mathcal{H}(G,\rho)$-module. In order to define the left action, we need some notation. Let $W$ be a finite-dimensional vector space and let $\check{W}$ be its dual. For $a \in \operatorname{End}_{\mathbb{C}}(W)$, we define $\check{a} \in \operatorname{End}_{\mathbb{C}}(\check{W})$ by
$$
(a v, \check{v}) = (v, \check{a}\check{v}) \qquad (v \in W,\ \check{v} \in \check{W}),
$$
where $(\cdot,\cdot)$ is the canonical pairing between $W$ and $\check{W}$.
Then \cite[2.7]{BushnellKutzko1998} gives the left action: if $f \in \mathcal{H}(G,\rho)$ and $\phi \in V_\rho$, then
$$
f*\phi(w) = \int_G \pi(g)\,\phi\bigl(\check{f(g)} w\bigr)\, dg .
$$

For a smooth representation $V$ of $G$, denote by $V[\rho]$ the $\rho$-isotypic subspace.

\begin{definition}[$\mathfrak{s}$-type]
\label{def:s-type}
With the notation above, $\rho$ is called an \emph{$\mathfrak{s}$-type} if the functor
$$
V \longmapsto V_\rho
$$
is an equivalence of categories between $\mathfrak{R}^{\mathfrak{s}}(G)$ and the category of $\mathcal{H}(G,\rho)$-modules.
\end{definition}

In this paper, we do not need such a general theory. We only need to consider two special cases: 
\begin{enumerate}
\item Types for depth-zero supercuspidal representations.
\item A type for the cuspidal datum of the form $\tau \otimes \tau \cdots \tau$ ($n$ times), where $\tau$ is an irreducible supercuspidal representation of $G_r$.
\end{enumerate}

Types for depth-zero supercuspidal representations are very simple, and the situation is even simpler for $\operatorname{GL}_n$, because the first cohomology group of the centre of $\operatorname{GL}_n$ is trivial; hence every hyperspecial subgroup of $\operatorname{GL}_n$ is conjugate. In \cite{Morris1999}, Morris proves that for a depth-zero supercuspidal representation $(\tau,V)$ of $G_r$, its Bushnell--Kutzko type is of the form $(\operatorname{GL}_r(O_F),\tau_0)$, where $\tau_0$ is an irreducible cuspidal representation of $\operatorname{GL}_n(\mathbb{F}_q)$ inflated to $\operatorname{GL}_n(O_F)$.
\begin{theorem}
Let $(\tau,V)$ be an irreducible depth-zero supercuspidal representation of $G_n$. Then a type for this cuspidal datum is of the form $(\operatorname{GL}_n(O_F),\tau_0)$, where $\tau_0$ is an irreducible cuspidal representation of $\operatorname{GL}_n(\mathbb{F}_q)$ inflated to $\operatorname{GL}_n(O_F)$. Moreover, every irreducible supercuspidal representation in this Bernstein block has the form $\operatorname{c-Ind}_{N_{G_n}(\operatorname{GL}_n(O_F))}^{\operatorname{GL}_n(F)}(\tau_1)$, where $\tau_1$ is an extension of $\tau_0$ to $N_{G_n}(\operatorname{GL}_n(O_F)) = F^{\times} \operatorname{GL}_n(O_F)$ (this follows easily from the Cartan decomposition).
\end{theorem}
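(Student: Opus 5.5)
The first assertion, that $(\operatorname{GL}_n(O_F),\tau_0)$ is a type, is Morris's theorem, so I will only recall how it arises. Since $\tau$ has depth zero, Moy--Prasad theory gives a vertex $x$ of the building with $\tau^{G_{x+}}\neq 0$, and the finite representation $\tau^{G_{x+}}$ of $G_x/G_{x+}$ contains a cuspidal constituent. All hyperspecial vertices of $\operatorname{GL}_n$ are conjugate, and because $\tau$ is supercuspidal the parahoric can be taken maximal. So after conjugating we may assume $G_x=K_n$ and $G_x/G_{x+}=G_n^F$. Pick an irreducible cuspidal $\tau_0\subset\tau^{K_{n+}}$. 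Morris shows $(K_n,\tau_0)$ is a type for the inertial class of $\tau$, and I take that as given.

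For the second assertion, I would describe every irreducible $\pi$ in the block as a compact induction. Since $\pi$ lies in the block and $\tau_0$ is a type, $\operatorname{Hom}_{K_n}(\tau_0,\pi)\neq0$. The next step is to compute the intertwining of $\tau_0$. By the Cartan decomposition every $g\in G_n$ lies in $K_n\,\varpi^{\mu}K_n$ for a dominant coweight $\mu=(\mu_1\ge\dots\ge\mu_n)$. If $\mu$ is not central, then $K_n\cap\varpi^{\mu}K_n\varpi^{-\mu}$ maps onto a proper parabolic subgroup $P^F$ of $G_n^F$, and the unipotent radical of the opposite parabolic (or of $P^F$ itself, depending on conventions) lies in the image of $\varpi^{\mu}K_{n+}\varpi^{-\mu}$. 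An intertwiner of $\tau_0$ and ${}^{g}\tau_0$ on this intersection would therefore give a nonzero vector of $\tau_0$ fixed by a nontrivial unipotent radical. Cuspidality of $\tau_0$ rules that out. Hence the intertwining set $I_{G_n}(\tau_0)$ is contained in $F^\times K_n$.

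The same computation identifies the normalizer. The element $\varpi^{\mu}$ normalizes $K_n$ exactly when $\mu$ is central, so $N_{G_n}(K_n)=F^\times K_n$. Now extend $\tau_0$ to $\tau_1$ on $F^\times K_n$ by letting $\varpi$ act by a scalar chosen so that the central character matches $\pi$; this is possible because $F^\times\cap K_n=O_F^\times$ already acts by a scalar on $\tau_0$. Then $\operatorname{Hom}_{F^\times K_n}(\tau_1,\pi)\ne0$. Frobenius reciprocity for compact induction gives a nonzero map $\operatorname{c-Ind}(\tau_1)\to\pi$. By the standard criterion (Mackey plus intertwining contained in $F^\times K_n$), $\operatorname{c-Ind}_{F^\times K_n}^{G_n}\tau_1$ is irreducible and supercuspidal. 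So $\pi\cong\operatorname{c-Ind}(\tau_1)$, and any such $\pi$ is supercuspidal, consistent with the block.

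The main obstacle is the intertwining computation, namely checking that conjugation by a non-central $\varpi^\mu$ really produces a full unipotent radical inside the relevant intersection. I would first do this explicitly with matrix entries: entries $(i,j)$ with $\mu_i>\mu_j$ are forced into $\mathfrak p_F$ on one side, which yields the block upper unitriangular group. From there the cuspidality argument is routine.
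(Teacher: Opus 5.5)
Your argument is correct. It fills in what the paper leaves to the literature.

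The paper gives no proof of this statement. It attributes both assertions to Morris and only remarks that $N_{G_n}(K_n)=F^\times K_n$ follows from the Cartan decomposition.

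You cite Morris only for the type property and then derive the compact-induction description yourself:
\begin{enumerate}
\item For dominant non-central $\mu$, the group $K_n\cap\varpi^\mu K_n\varpi^{-\mu}$ reduces onto the lower block-triangular parabolic of $G_n^F$.
\item Its unipotent radical lies in the image of $K_n\cap\varpi^\mu K_{n+}\varpi^{-\mu}$, on which ${}^{\varpi^\mu}\tau_0$ is trivial.
\item Cuspidality of $\tau_0$ then kills every intertwiner, so $I_{G_n}(\tau_0)\subseteq F^\times K_n$.
\item Frobenius reciprocity for $\operatorname{c-Ind}$ and the irreducibility criterion finish the proof.
\end{enumerate}
The paper's route is shorter but rests entirely on the citation. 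Yours makes the second assertion self-contained and also justifies the normalizer identity the paper only asserts.

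Your argument gives more than you use.
\begin{itemize}
\item The nonvanishing of $\operatorname{Hom}_{K_n}(\tau_0,\pi)$ does not need the type property. Indeed $\pi\cong\tau\otimes(\chi\circ\det)$ with $\chi$ unramified, and $\chi\circ\det$ is trivial on $K_n$.
\item Conversely, your computation shows that any irreducible $\pi$ containing $\tau_0$ is $\operatorname{c-Ind}\tau_1$ for some extension $\tau_1$.
\item Extensions differ only in the scalar by which $\varpi$ acts, and twisting by $\chi\circ\det$ multiplies that scalar by $\chi(\varpi)^n$. Hence all such $\operatorname{c-Ind}\tau_1$ are unramified twists of $\tau$.
\end{itemize}
So your intertwining computation already proves that $(K_n,\tau_0)$ is a type in the Bushnell--Kutzko sense. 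The only external input left is the Moy--Prasad existence of a cuspidal $\tau_0\subset\tau^{K_{n+}}$ after conjugation.
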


For the type of $\tau \otimes \tau \cdots \tau$ ($n$ times), we will construct it via $G$-covers.
We will use a definition of $G$-covers slightly different from the original ones; see \cite[4.2]{KimYu2017}.

\begin{definition}[$G$-cover]
Let $G$ be a connected reductive group over a non-Archimedean local field, and let $M$ be a Levi subgroup of $G$. Let $K \subset G$ and $K_M \subset M$ be compact open subgroups, with $K_M = K\cap M$. Let $\rho$ and $\rho_M$ be irreducible smooth representations of $K$ and $K_M$, respectively.

The pair $(K,\rho)$ is called a \emph{$G$-cover} of $(K_M,\rho_M)$ if, for every pair of opposite parabolic subgroups
$$
P = MU,\qquad \overline{P} = M\overline{U}
$$
of $G$ with common Levi factor $M$, the following conditions hold:
\begin{enumerate}
\item The subgroup $K$ admits an Iwahori decomposition with respect to $(U,M,\overline{U})$; that is,
$$
K = (K\cap U)(K\cap M)(K\cap\overline{U}).
$$

\item The restriction of $\rho$ to $K_M$ is isomorphic to $\rho_M$, and
$$
K\cap U \subseteq \ker(\rho),\qquad K\cap\overline{U} \subseteq \ker(\rho).
$$

\item For every smooth representation $V$ of $G$, the canonical projection
$$
r_U\colon V \longrightarrow V_U
$$
from $V$ to its Jacquet module with respect to $U$ is injective on the $\rho$-isotypic subspace:
$$
\left.r_U\right|_{V[\rho]} \colon V[\rho] \longrightarrow V_U \quad\text{is injective}.
$$
\end{enumerate}
\end{definition}

Theorem 8.3 of \cite{BushnellKutzko1998} tells us:
\begin{theorem}
Suppose $G$ is a reductive $p$-adic group, $P=MU$ is a parabolic subgroup, and $\tau$ is an irreducible supercuspidal representation of $M$. Then $[M,\tau]$ is a cuspidal datum for $M$. The pair $[M,\tau]$ can also be naturally viewed as a cuspidal datum for $G$. If $(K_M,\rho_M)$ is a type for the cuspidal datum $[M,\tau]$ of $M$, and $(K,\rho)$ is a $G$-cover of $(K_M,\rho_M)$, then $(K,\rho)$ is a type for $[M,\tau]$ in $G$.
\end{theorem}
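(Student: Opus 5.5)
The plan is to reduce the statement to the Bushnell--Kutzko criterion for types. That criterion says that an irreducible representation $\rho$ of a compact open subgroup $K$ is an $\mathfrak{s}$-type if and only if, for every irreducible smooth $V$ of $G$, we have $V_\rho\neq 0$ exactly when $V\in\mathfrak{R}^{\mathfrak{s}}(G)$. With $\mathfrak{s}=[M,\tau]_G$ and $\mathfrak{s}_M=[M,\tau]_M$, it therefore suffices to compare $\rho$-isotypic parts in $G$ with $\rho_M$-isotypic parts of Jacquet modules. I would prove this comparison first and then deduce both directions of the criterion.

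\textbf{Step 1: the Jacquet isomorphism.} Fix a parabolic $P=MU$. I will show that $r_U$ restricts to an isomorphism
$$
V[\rho]\xrightarrow{\ \sim\ } V_U[\rho_M]
$$
of $K_M$-spaces. Injectivity is condition (3) of the definition of a $G$-cover. The map lands in $V_U[\rho_M]$ because $\rho|_{K_M}\simeq\rho_M$ and $K\cap U$ acts trivially.

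For surjectivity I would use Jacquet's lemma. Choose an element $z$ in the centre of $M$ that is strongly $(P,K)$-positive, so that
$$
z(K\cap U)z^{-1}\subset K\cap U,\qquad z^{-1}(K\cap\overline U)z\subset K\cap\overline U,
$$
with the contraction becoming arbitrarily strong under powers. Given a $\rho_M$-isotypic vector in $V_U$, lift it to $V$, translate by $z^m$ for $m$ large, and average over $K\cap\overline U$ and over $K\cap U$. The Iwahori decomposition (condition (1)) together with the triviality of $\rho$ on $K\cap U$ and $K\cap\overline U$ (condition (2)) shows that this averaged vector is $\rho$-isotypic. Its image in $V_U$ is $z^m$ applied to the original class, and $z$ acts invertibly on $V_U$. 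Hence $r_U$ is surjective onto $V_U[\rho_M]$.

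This step also shows that the Hecke operator $T_z\in\mathcal H(G,\rho)$ supported on $KzK$ acts compatibly with the action of $z$ on the Jacquet side.

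\textbf{Step 2: the two directions of the criterion.} Let $V$ be irreducible with $V_\rho\neq0$. By Step 1, $(V_U)_{\rho_M}\neq 0$. Since $V_U$ is admissible of finite length, some irreducible subquotient of $V_U$ contains $\rho_M$. Because $(K_M,\rho_M)$ is a type in $M$, that subquotient lies in $\mathfrak{R}^{\mathfrak{s}_M}(M)$, i.e.\ it is an unramified twist of $\tau$. By Frobenius reciprocity (after passing to a subquotient realising this), $V$ is a subquotient of some $\operatorname{Ind}_P^G(\tau\chi)$, so $V\in\mathfrak{R}^{\mathfrak{s}}(G)$.

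Conversely, let $V\in\mathfrak{R}^{\mathfrak{s}}(G)$ be irreducible. Then $V$ embeds in some $\operatorname{Ind}_{P'}^G(\tau\chi)$ for a parabolic $P'$ with Levi $M$. The $G$-cover conditions are imposed for every pair of opposite parabolics with Levi $M$, so we may take $P=P'$. Frobenius reciprocity gives a nonzero map $V_U\to\tau\chi$. Since $\tau\chi$ contains $\rho_M$ and $V_U$ has finite length, admissibility gives $(V_U)_{\rho_M}\neq0$, and Step 1 then gives $V_\rho\neq0$.

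\textbf{Main obstacle.} The delicate point is the surjectivity in Step 1. I would handle it exactly as in Bushnell--Kutzko, via the strongly positive element $z$ and the invertibility of $T_z$ in $\mathcal H(G,\rho)$, which requires checking the support computation $KzK\cdot Kz^{-1}K$. A second subtlety is passing from the criterion on irreducibles to the full equivalence of categories in Definition~\ref{def:s-type}. For this I would invoke the general Bushnell--Kutzko result that the irreducible criterion above implies that $V\mapsto V_\rho$ is an equivalence on $\mathfrak{R}^{\mathfrak{s}}(G)$.
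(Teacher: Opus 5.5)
Your strategy is sound, and it is in substance the Bushnell--Kutzko argument behind Theorem~8.3, which the paper only quotes. You compare $V[\rho]$ with $V_U[\rho_M]$ for every parabolic with Levi factor $M$, verify the type criterion on irreducibles, and then invoke Bushnell--Kutzko's theorem that this criterion yields the equivalence of categories. Your route has one advantage over the bare citation. Bushnell--Kutzko's own definition of a cover demands an invertible element of $\mathcal H(G,\rho)$ supported on $KzK$, whereas the paper's condition (3) is injectivity of $r_U$ on $V[\rho]$; the citation therefore tacitly relies on the equivalence of the two definitions. Your argument works directly from the paper's conditions. The implication $V_\rho\neq0\Rightarrow V\in\mathfrak R^{\mathfrak s}(G)$ uses only (3), and the converse uses only (1), (2) and averaging.

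That said, Step~1 does not prove what it claims.

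\begin{itemize}
\item Averaging only shows that $z^m\bar v\in W:=r_U(V[\rho])$ for all $m\ge m_0$, where $m_0$ depends on the chosen lift. Invertibility of $z$ on $V_U$ does not bring you back to $\bar v$, because nothing shows that $W$ is stable under $z^{-1}$.
\item The remedy in your last paragraph is not available. Invertibility of $T_z$ is not a hypothesis of the paper's definition. It also cannot follow from (1), (2) and support considerations alone. For example, $K(1)=1+M_2(\mathfrak p_F)\subset\operatorname{GL}_2(F)$ with the trivial character satisfies (1) and (2) relative to the diagonal torus, yet $T_z$ is not invertible there: depth-zero supercuspidals have $K(1)$-fixed vectors but vanishing Jacquet modules. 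Deriving invertibility from (3) is a genuine theorem.
\end{itemize}

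None of this is actually needed. Step~2 uses Step~1 only for irreducible $V$, and only in the form $V_U[\rho_M]\neq0\Rightarrow V[\rho]\neq0$, which your averaging gives directly. If you want surjectivity as well: for irreducible $V$ the space $V_U[\rho_M]$ is finite-dimensional and $z^m$ acts on it bijectively, so choosing $m$ uniformly for a basis gives it.

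You should also make the averaging precise. First replace the lift $v_0$ by $e_{\rho_M}v_0$, which is allowed because $r_U$ is $K_M$-equivariant. Then apply the full idempotent $e_\rho=e_{K\cap U}*e_{\rho_M}*e_{K\cap\overline U}$ to $\pi(z^m)v_0$. Averaging over $K\cap U$ and $K\cap\overline U$ alone does not produce a $\rho$-isotypic vector.
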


Let us use Theorem~8.3 to construct a type for $\tau \otimes \tau \cdots \tau$ ($n$ times), where $\tau$ is a depth-zero irreducible supercuspidal representation of $G_r$. Suppose $M$ is the Levi subgroup of $G_{nr}$ of the form $(G_r)^n$, and $P=MU$ is the standard parabolic, while $U^-$ is its opposite. Set $J_M = K_{nr} \cap M$, and $J = (U^-\cap K_{nr+})(K_{nr}\cap M)(U \cap K_{nr})$. Also set $J_{M+} = K_{nr+}\cap M$ and $J_+ = (U^-\cap K_{nr+})(K_{nr+}\cap M)(U \cap K_{nr})$. It is clear that $J$ is a parahoric subgroup associated to $M$, and
$$
J_M/J_{M+} \cong J/J_+ \cong (\operatorname{GL}_r(\mathbb{F}_q))^n .
$$
Suppose $\tau_0$ is a type for $\tau$, so $(\tau_0 \otimes \tau_0 \cdots \tau_0$ ($n$ times), $J_M$) is a type for $[\tau \otimes \tau \cdots \tau$ ($n$ times), $M]$ in $M$. Set $\rho_M = \tau_0 \otimes \tau_0 \cdots \tau_0$ ($n$ times) and define a representation $\rho$ of $J$ via $\rho_M$ and the identification $J_M/J_{M+} \cong J/J_+ \cong (\operatorname{GL}_r(\mathbb{F}_q))^n$.

We will show that $(J,\rho)$ is a $G$-cover of $(J_M,\rho_M)$. The first two conditions are trivially satisfied. For the third condition, since $\rho$ is trivial on $J_+$, the $\rho$-isotypic subspace is contained in $V^{J_+}$. By Theorem~4.4, the natural map from $V^{J_+}$ to $V_U^{\,J_{M+}}$ is an isomorphism. Hence the third condition is satisfied. Therefore, we have:

\begin{proposition}
$(J,\rho)$ is a type for $[\tau \otimes \tau \cdots \tau$ ($n$ times), $M]$ in $G_{nr}$.
\end{proposition}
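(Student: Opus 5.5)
The plan is to apply Theorem~8.3 of \cite{BushnellKutzko1998} (the $G$-cover theorem recalled above). This needs two inputs: a type for $[M,\tau^{\otimes n}]$ inside $M=(G_r)^n$, and a verification that $(J,\rho)$ is a $G_{nr}$-cover of $(J_M,\rho_M)$ in the sense of the definition taken from \cite{KimYu2017}.

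\textbf{Step 1: the type in $M$.} By Morris's theorem, $(K_r,\tau_0)$ is a type for $[G_r,\tau]$. We have $J_M=K_r^n$ and $\rho_M=\tau_0^{\otimes n}$. Since $\mathfrak R^{[M,\tau^{\otimes n}]}(M)$ is the external tensor product of the $n$ copies of $\mathfrak R^{[G_r,\tau]}(G_r)$, and $\mathcal H(M,\rho_M)\cong\mathcal H(G_r,\tau_0)^{\otimes n}$, the functor $V\mapsto \operatorname{Hom}_{J_M}(\rho_M,V)$ is again an equivalence. Concretely, one checks that $\rho_M$-isotypic vectors generate exactly the representations whose irreducible subquotients lie in the block. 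Hence $(J_M,\rho_M)$ is a type in $M$.

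\textbf{Step 2: the cover conditions.} Condition (1), the Iwahori decomposition, is built into the definition of $J$, since $J$ is the product $(U^-\cap K_{nr+})(K_{nr}\cap M)(U\cap K_{nr})$. That this product is a group follows from the parahoric structure: block upper-unitriangular entries lie in $\mathcal O_F$ and block lower entries in $\mathfrak p_F$.

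For condition (2), $J\cap U$ and $J\cap U^-$ lie in $J_+$, and $\rho$ is inflated from $J/J_+\cong J_M/J_{M+}$, so both act trivially. The restriction of $\rho$ to $J_M$ is $\rho_M$ because the composite $J_M\to J/J_+$ is the identification used to define $\rho$.

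One point deserves care. The definition requires these properties for every parabolic with Levi factor $M$, not just the standard one. For a nonstandard $P'=MU'$, I would argue that $J\cap U'$ and $J\cap \overline{U'}$ are still contained in $J_+$, or else reduce to the standard case by the Weyl-conjugation symmetry of $M$. I expect that the standard-pair case is the one actually used, via \cite[8.3]{BushnellKutzko1998}.

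\textbf{Step 3: injectivity on the isotypic part, the main point.} Since $\rho$ is trivial on $J_+$, we have $V[\rho]\subset V^{J_+}$. Here $J_+$ plays the role of $G_{x+}$ for the parahoric $J=G_x$, whose associated Levi subgroup in the Moy--Prasad construction is exactly $M$. Theorem~4.4 (Moy--Prasad, Proposition~6.7) then says that the projection $V^{J_+}\to (V_U)^{J_{M+}}$ is an isomorphism, in particular injective. Strictly, Theorem~4.4 is stated for admissible $V$, while the cover definition asks for every smooth $V$. I would handle this by noting that \cite{MoyPrasad1996} proves the statement for arbitrary smooth $V$, via the averaging argument over $J\cap U^-$. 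Alternatively, any vector in the kernel generates a finitely generated, hence admissible after passing to a suitable quotient, subrepresentation, and one can reduce to that case.

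Given Steps 1--3, Theorem~8.3 yields that $(J,\rho)$ is a type for $[M,\tau^{\otimes n}]$ in $G_{nr}$. I expect the only real obstacles to be the smooth-versus-admissible gap in Step 3 and the bookkeeping over all parabolics with Levi factor $M$ in Step 2.
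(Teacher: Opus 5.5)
This is the paper's proof: conditions (1)--(2) of the cover definition are read off from the block shape of $J$ and $J_+$, condition (3) comes from the Moy--Prasad isomorphism $V^{J_+}\cong (V_U)^{J_{M+}}$ of Theorem~4.4, and \cite[Theorem 8.3]{BushnellKutzko1998} concludes. The paper checks only the standard pair $(U,U^-)$ and does not address the smooth-versus-admissible issue you raise. If you rely on your second fallback for that issue, finite generation alone does not give admissibility, so you would need Bernstein's finiteness theorem to produce an admissible quotient in which the given vector survives.
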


The main theorem of \cite{BushnellKutzko1993} (Section~5.6) states:
\begin{theorem}
$\mathcal{H}(G_{nr},\rho) \cong \mathcal{H}_{\mathrm{aff}}(S_n, q^r)$, where the right-hand side is the affine Hecke algebra defined in \cite[5.4.6]{BushnellKutzko1993}.
\end{theorem}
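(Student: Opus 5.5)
The statement is the Bushnell--Kutzko computation of the Hecke algebra of a type $(J,\rho)$ that is a $G_{nr}$-cover of $(J_M,\rho_M)$, where $\rho_M=\tau_0^{\otimes n}$. I would follow the standard route for covers and compute the support and the generators of $\mathcal H(G_{nr},\rho)$ explicitly.

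\textbf{Support.} An element $g$ supports a nonzero function in $\mathcal H(G_{nr},\rho)$ exactly when $g$ intertwines $\rho$. Using the Iwahori decomposition of $J$ and the Cartan/Bruhat decomposition relative to the Iwahori subgroup, every double coset $JgJ$ contains an element $n_w\varpi^{\mu}$, where $w$ ranges over the affine Weyl group $\widetilde W$ of $G_{nr}$ and $\mu$ over cocharacters. Such an element normalizes $M$ only if it lies in the extended group $N_{G_{nr}}(M)$. Since $\tau_0$ is cuspidal, one checks the following: if $g\notin J\,N_{G_{nr}}(M)\,J$, then $g$ conjugates some unipotent radical of a proper parabolic of a factor $\operatorname{GL}_r(\mathbb F_q)$ into $J_+$ (or into $J\cap U$, $J\cap U^-$), and cuspidality kills the intertwining. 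Among elements of $N(M)$, only those permuting the $n$ factors (where all factors are $\tau_0$, so every permutation stabilizes $\rho_M$) together with the central lattice $\varpi^{\mathbb Z^n}$ of $M$ survive. The support is therefore $J\,(\mathbb Z^n\rtimes S_n)\,J$, and each double coset carries a one-dimensional space of functions, because $\operatorname{End}(\tau_0)$ is scalar.

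\textbf{Generators and relations.} I would take $T_s$, supported on $Jn_sJ$ for the simple transpositions $s$, normalized using the intertwiner $\gamma_s$ of Section~3, together with $\Pi$, supported on $J\varpi_1\Pi_0J$, where $\varpi_1\Pi_0$ is the standard affine cyclic shift, invertible of support one coset. The finite part $\bigoplus_{w\in S_n}\mathcal H(J\,n_wJ)$ embeds into $\mathcal H(K_{nr},\rho)$. This identifies it with $\operatorname{End}(\operatorname{Ind}_{P^F}^{G^F_{nr}}\rho)$, which by Howlett--Lehrer (Example~4.15) is $\mathcal H(S_n,q^r)$; this yields the quadratic relation $T_s^2=(q^r-1)T_s+q^r$. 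The braid relations then follow from length additivity $\ell(ww')=\ell(w)+\ell(w')$, which gives $T_wT_{w'}=T_{ww'}$ via the support computation. The relation $\Pi T_s\Pi^{-1}=T_{s'}$, with $s'$ the shifted reflection, follows from conjugation of supports. The affine generator $T_0:=\Pi T_{s_{n-1}}\Pi^{-1}$ inherits the quadratic relation. Together these give a surjection $\mathcal H_{\mathrm{aff}}(S_n,q^r)\to\mathcal H(G_{nr},\rho)$. It is bijective because both sides have bases indexed by $\mathbb Z^n\rtimes S_n$.

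\textbf{Main obstacle.} The main obstacle is the support computation, specifically the vanishing of the intertwining off $J\,N(M)\,J$. The cleanest way to handle it is to use the cover property and Bushnell--Kutzko's embedding $t_P:\mathcal H(M,\rho_M)\to\mathcal H(G,\rho)$, which reduces the lattice part to the $M$-Hecke algebra $\mathbb C[\mathbb Z^n]$. After that reduction, only the finite Weyl part needs to be checked via the Howlett--Lehrer computation.
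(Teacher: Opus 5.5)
\textbf{Verdict: correct, by a different route.} The paper gives no argument for this statement. It quotes the main theorem of Bushnell--Kutzko (\S5.6), implicitly treating $(J,\rho)$ as one of their level-zero simple types: $J=U(\mathfrak A)$ for the principal order of period $n$ with blocks of size $r$, and $\rho$ inflated from $\tau_0^{\otimes n}$. It later re-quotes the same isomorphism from Chan--Savin.

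You instead re-derive the result along the standard Bushnell--Kutzko/Morris lines:
\begin{enumerate}
\item compute the intertwining of $\rho$;
\item identify $\mathcal H(K_{nr},\rho)$ with the Howlett--Lehrer algebra;
\item adjoin the invertible element $\Pi$;
\item match an Iwahori--Matsumoto presentation.
\end{enumerate}
The citation is immediate. Your route is self-contained, and it has a bonus: $H_0$ is, by construction, the Harish--Chandra endomorphism algebra $\operatorname{End}(\operatorname{Ind}_{P^F}^{G_{nr}^F}\rho_M)$. The comparison of the two actions in Section~7, including the scalar $a$, is then built in rather than checked afterwards.

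\textbf{Where the sketch is thin.} The weak point is the step you yourself flag, the vanishing of intertwining off $J(\mathbb Z^n\rtimes S_n)J$.
\begin{itemize}
\item Your $t_P$ shortcut does not settle it. $t_P$ embeds $\mathcal H(M,\rho_M)\cong\mathbb C[\mathbb Z^n]$ and shows that those double cosets lie in the support. It proves no vanishing anywhere else.
\item Splitting along $JN_{G_{nr}}(M)J$ does not save work. $M$ itself contains non-intertwining elements, such as $\operatorname{diag}(\varpi,1,\dots,1)$ when $r\ge2$, and these need the same cuspidality argument.
\end{itemize}
The clean version runs one argument over all double cosets. Take $\tilde w$ of minimal length in $W_J\tilde wW_J$, with $W_J=S_r^n$. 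The image of $J\cap{}^{\tilde w}J_+$ in $J/J_+\cong\operatorname{GL}_r(\mathbb F_q)^n$ is the unipotent radical of a parabolic subgroup. That parabolic is proper unless $\tilde w$ permutes the level-zero affine roots inside the diagonal blocks, i.e.\ unless its translation part is block-constant and its finite part permutes blocks. In the proper case, ${}^{\tilde w}\rho$ is trivial on $J\cap{}^{\tilde w}J_+$, while $\tau_0^{\otimes n}$ has no nonzero invariants under its image. So every intertwiner vanishes.

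\textbf{Two further points to justify.} First, you need $JxJyJ=JxyJ$ whenever lengths add in $\mathbb Z^n\rtimes S_n$. This holds because these block elements normalise $W_J$, are minimal in their $W_J$-cosets, and have length in $\widetilde W_{nr}$ equal to $r^2$ times their length in type $\widetilde A_{n-1}$. Second, leftover scalars, for example $c$ in $\Pi T_s\Pi^{-1}=cT_{s'}$, are forced to equal $1$ by the quadratic relation, exactly as in the paper's $a=1$ argument.
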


The algebra $\mathcal{H}_{\mathrm{aff}}(S_n, q^r)$ contains a canonical finite subalgebra, which we denote by $H_{S_n}$. Let $S$ be the set of simple reflections of $S_n$. The algebra $H_{S_n}$ has a canonical basis indexed by the elements of $S_n$; we denote this basis by $[w]$ for $w \in S_n$. Moreover, we can choose $w$ to be the corresponding block matrix with entries $0$ or $I_r$.

\section{$K_+$-fixed space of a depth-zero supercuspidal representation}

Suppose $\tau$ is an irreducible depth-zero supercuspidal representation of $G_r$ with maximal simple type $(K_r,\tau_0)$, where $\tau_0$ is an irreducible cuspidal representation of $\operatorname{GL}_r(\mathbb{F}_q)$; we abuse notation and regard $\tau_0$ as a representation of $K_r$ via inflation.

\begin{lemma}
By Theorem~5.2, $\tau$ has the form $\operatorname{c-Ind}_{F^{\times} K_r}^{G_r}(\tau_1)$, where $\tau_1$ is an extension of $\tau_0$. Then
\[
\dim_{\mathbb{C}} \operatorname{Hom}_{K_r}(\tau_0,\tau) = 1 .
\]
\end{lemma}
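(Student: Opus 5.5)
We compute $\operatorname{Hom}_{K_r}(\tau_0,\tau)$ by restricting the compactly induced representation to $K_r$ and applying Mackey theory. Write $Z=F^{\times}$ for the centre and $H=F^{\times}K_r$. By the Cartan decomposition, the double cosets $K_r\backslash G_r/H$ are represented by the elements $t_\lambda=\operatorname{diag}(\varpi^{\lambda_1},\dots,\varpi^{\lambda_r})$ with $\lambda_1\ge\cdots\ge\lambda_r$, taken modulo simultaneous translation of the $\lambda_i$, since the centre lies in $H$. Mackey's formula for compact induction then gives a decomposition of $K_r$-representations
\[
\tau|_{K_r}\;\cong\;\bigoplus_{\lambda}\operatorname{c-Ind}_{K_r\cap t_\lambda H t_\lambda^{-1}}^{K_r}\bigl({}^{t_\lambda}\tau_1\bigr)
=\bigoplus_{\lambda}\operatorname{Ind}_{K_r\cap t_\lambda K_r t_\lambda^{-1}}^{K_r}\bigl({}^{t_\lambda}\tau_0\bigr).
\]
The induction on the right is finite, since the intersections are open of finite index. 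By Frobenius reciprocity, the $\lambda$-summand contributes
\[
\operatorname{Hom}_{K_r\cap t_\lambda K_r t_\lambda^{-1}}\bigl(\tau_0,{}^{t_\lambda}\tau_0\bigr)
\]
to $\operatorname{Hom}_{K_r}(\tau_0,\tau)$.

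For the trivial double coset, where $\lambda$ is constant, the contribution is $\operatorname{End}_{K_r}(\tau_0)=\mathbb{C}$, because $\tau_0$ is irreducible. This gives the lower bound of $1$.

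The main step is to show that every nontrivial $\lambda$ contributes $0$. We use the standard depth-zero argument, which is the intertwining computation behind Morris's theorem (Theorem 5.2).

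1. Let $\lambda$ be non-constant. Then the image of $K_r\cap t_\lambda K_r t_\lambda^{-1}$ in $K_r/K_{r+}=\operatorname{GL}_r(\mathbb{F}_q)$ is contained in a proper parabolic subgroup $P_\lambda=L_\lambda U_\lambda$, where $P_\lambda$ is determined by the blocks of equal $\lambda_i$.

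2. The subgroup $K_r\cap t_\lambda K_{r+}t_\lambda^{-1}$ maps onto the unipotent radical $U_\lambda(\mathbb{F}_q)$, or onto the opposite radical depending on conventions. The twisted representation ${}^{t_\lambda}\tau_0$ is trivial on $t_\lambda K_{r+}t_\lambda^{-1}$.

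3. Hence any intertwiner $\phi\colon\tau_0\to{}^{t_\lambda}\tau_0$ has image contained in $\tau_0^{U_\lambda(\mathbb{F}_q)}$, after identifying the underlying spaces.

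4. Since $\tau_0$ is cuspidal, $\tau_0^{U_\lambda(\mathbb{F}_q)}=0$ for every proper parabolic $P_\lambda$. Therefore $\phi=0$.

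The only real care needed is in step 2: one has to check via the Iwahori decomposition that the relevant unipotent subgroup really lies in the intersection and that ${}^{t_\lambda}\tau_0$ is trivial on it. Both follow from a direct matrix-entry valuation computation for $t_\lambda u t_\lambda^{-1}$.

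Summing over double cosets, only the trivial one contributes, so $\dim_{\mathbb{C}}\operatorname{Hom}_{K_r}(\tau_0,\tau)=1$.
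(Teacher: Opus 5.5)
Your argument is correct, but it takes a different route from the paper's. The paper uses the adjunction on the other side. Since $F^{\times}$ acts on $\tau_1$ and on $\tau=\operatorname{c-Ind}_{F^{\times}K_r}^{G_r}(\tau_1)$ by the same central character, $\operatorname{Hom}_{K_r}(\tau_0,\tau)=\operatorname{Hom}_{F^{\times}K_r}(\tau_1,\tau)$. Compact induction from an open subgroup is left adjoint to restriction, so this equals $\operatorname{Hom}_{G_r}(\tau,\tau)=\mathbb{C}$ by Schur's lemma. That proof is two lines, but all its content sits in the irreducibility of $\tau$ supplied by Theorem~5.2.

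Your Mackey computation places $\tau$ in the target instead, so you never use irreducibility of $\tau$. In effect you reprove the intertwining statement that underlies it: only $F^{\times}K_r$ intertwines $\tau_0$. The extra work pays off. The same double-coset analysis applied to $K_{r+}$-fixed vectors shows that for non-constant $\lambda$ the $\lambda$-summand has $K_{r+}$-invariants built from $\tau_0^{U_\lambda(\mathbb{F}_q)}=0$. This gives $\tau^{K_{r+}}\cong\tau_0$ (Proposition~6.3) directly, without the Moy--Prasad minimal $K$-type argument of Lemma~6.2.

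There is one small slip in steps 2--3. Triviality of ${}^{t_\lambda}\tau_0$ on $K_r\cap t_\lambda K_{r+}t_\lambda^{-1}$ gives $\phi\circ\tau_0(u)=\phi$. That means $\phi$ factors through the coinvariants of $\tau_0$ under the image $\overline{U}_\lambda(\mathbb{F}_q)$; it does not put the image of $\phi$ inside invariants. For the image statement, use instead $K_{r+}\cap t_\lambda K_r t_\lambda^{-1}$: $\tau_0$ is trivial on it, and its conjugate $t_\lambda^{-1}(\cdot)t_\lambda$ maps onto $U_\lambda(\mathbb{F}_q)$. Either space vanishes by cuspidality, so your conclusion is unaffected.
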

\begin{proof}
By considering the central character, it is clear that
\[
\operatorname{Hom}_{K_r}(\tau_0,\tau) = \operatorname{Hom}_{K_r F^{\times}}(\tau_1,\tau)
= \operatorname{Hom}_{G_r}\bigl( \operatorname{c-Ind}_{F^{\times} K_r}^{G_r}(\tau_1), \tau \bigr)
= \operatorname{Hom}_{G_r}(\tau,\tau) .
\]
Hence we obtain the desired result.

In fact, Bushnell--Kutzko theory gives a more general statement: if $\pi$ is an arbitrary irreducible supercuspidal representation of $G_r$ (not necessarily depth zero), with associated simple type $(J,\lambda)$ and a field $E$ coming from the associated simple stratum, then
\[
\dim_{\mathbb{C}} \operatorname{Hom}_J(\lambda,\pi) = 1 .
\]
\end{proof}

\begin{lemma}
Let $\tau$ be an irreducible depth-zero supercuspidal representation of $G_r$, let $c$ be the facet of the Bruhat--Tits building of $G_r$ associated to $K_r$, and let $\rho$ be an irreducible component of $\tau^{K_{r+}}$. Then $[c,\rho]$ is an unrefined minimal $K$-type for $\tau$. Since any two unrefined minimal $K$-types are associate, all irreducible components of $\tau^{K_{r+}}$ are equivalent.
\end{lemma}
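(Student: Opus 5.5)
The plan is to use the Moy--Prasad theory of unrefined minimal $K$-types, with $\tau^{K_{r+}}\neq 0$ supplied by the previous lemma.

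\textbf{Step 1: non-vanishing and depth.} By the previous lemma, $\operatorname{Hom}_{K_r}(\tau_0,\tau)$ is one-dimensional. Since $\tau_0$ is inflated from $K_r/K_{r+}$, this gives $\tau^{K_{r+}}\neq 0$, so the pair $(c,\tau^{K_{r+}})$ detects depth zero. Here $c$ is the vertex with $G_{c}=K_r$ and $G_{c+}=K_{r+}$.

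\textbf{Step 2: each component is an unrefined minimal $K$-type.} Let $\rho$ be any irreducible component of $\tau^{K_{r+}}$, viewed as a representation of $K_r/K_{r+}\simeq\operatorname{GL}_r(\mathbb F_q)$. By Moy--Prasad, a depth-zero unrefined minimal $K$-type is a pair $[c,\rho]$ with $\rho$ an irreducible cuspidal representation of $G_c/G_{c+}$ that occurs in $\tau$. So the remaining issue is cuspidality of $\rho$.

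Suppose $\rho$ were not cuspidal. Then there is a proper standard parabolic $P^F=L^FU^F$ of $K_r/K_{r+}$ with $\rho^{U^F}\neq 0$. This parabolic has the form $G_x/K_{r+}$ for a non-maximal parahoric $G_x\supset I$ with associated Levi $M\subsetneq G_r$. Then
$$
0\neq \rho^{U^F}\subset (\tau^{K_{r+}})^{G_{x+}/K_{r+}}=\tau^{G_{x+}}\cong r_M^{G_r}(\tau)^{G_{x+}\cap M}
$$
by Theorem~4.4. The right-hand side vanishes because $\tau$ is supercuspidal, so $r_M^{G_r}(\tau)=0$. This contradiction shows $\rho$ is cuspidal, and hence $[c,\rho]$ is an unrefined minimal $K$-type of $\tau$.

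\textbf{Step 3: equivalence of components.} Moy--Prasad's theorem says any two unrefined minimal $K$-types of $\tau$ are associate. For two components $\rho,\rho'$ of $\tau^{K_{r+}}$ sitting on the same vertex $c$, association means that for some $g\in G_r$ we have $g c = c$ and $\rho'\simeq {}^g\rho$, or more generally that the two types are related by a $G$-conjugation identifying the parahorics. Such a $g$ normalizes $K_r$, so $g\in F^\times K_r$. Since $F^\times$ is central and $K_r$ acts on $\tau^{K_{r+}}$ by inner conjugation, ${}^g\rho\simeq\rho$. Therefore $\rho'\simeq\rho$, and all components are equivalent; by Step 1 they are all isomorphic to $\tau_0$.

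The main obstacle is Step 3: making precise that ``associate'' for two types at the same vertex reduces to conjugacy by the normalizer of $K_r$. The argument uses the definition in Moy--Prasad, namely that $G_c\cap gG_{c}g^{-1}$ surjects onto both reductive quotients and $\rho,{}^g\rho$ match, together with the Cartan decomposition. For a hyperspecial vertex, that surjectivity condition forces $g\in N_{G_r}(K_r)=F^\times K_r$.
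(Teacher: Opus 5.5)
Your proof is correct, but your cuspidality step takes a different route from the paper. The paper applies Moy--Prasad's Theorem~5.2 to find inside $\rho$ a depth-zero unrefined minimal $K$-type $[c_1,\lambda]$ with $G_{c_1}\subseteq K_r$; in effect this is the Harish-Chandra cuspidal support of $\rho$. It then uses that this type must be associate to $[c,\tau_0]$, and compares semisimple ranks of the reductive quotients to force $c_1=c$, so that $\rho=\lambda$ is cuspidal. The equivalence of all components is then read off from association without further comment.

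You instead derive cuspidality directly from the supercuspidality of $\tau$. A nonzero $U^F$-invariant vector in $\rho$ would be a nonzero vector of $\tau^{G_{x+}}\cong r_M^{G_r}(\tau)^{G_{x+}\cap M}$ by Theorem~4.4, and this space is zero. This has several advantages:
\begin{itemize}
\item it reuses a result already stated in the paper and avoids the rank comparison entirely;
\item it shows at once that all of $\tau^{K_{r+}}$ is cuspidal;
\item it works verbatim at a hyperspecial vertex of any unramified group.
\end{itemize}
In your route, association is needed only for the equivalence statement.

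Your Step~3 also supplies a detail the paper leaves implicit. For two types on the same hyperspecial vertex, the surjectivity condition in the definition of associates forces $g\in F^\times K_r$. By the Cartan decomposition, for any other $g$ the group $K_r\cap gK_rg^{-1}$ maps onto a proper parabolic subgroup of $\operatorname{GL}_r(\mathbb F_q)$. Conjugation by an element of $F^\times K_r$ does not change the isomorphism class of $\rho$.

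The paper's route stays within the 1994 minimal $K$-type framework and does not need the Jacquet-module comparison of Theorem~4.4. The cost is that the rank comparison and the passage from association to isomorphism are left as brief assertions.
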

\begin{proof}
By definition, the unrefined minimal $K$-type of $\tau$ is $[c,\tau_0]$, where $(K_r,\tau_0)$ is a type for this Bernstein block.

By Theorem 5.2 of \cite{MoyPrasad1994}, $\rho$ contains a depth-zero minimal $K$-type associated to a parahoric subgroup $G_{c_1} \subseteq K_r$ (where $c_1$ is a facet of the Bruhat--Tits building $\mathcal{B}(G)$). Denote by $\lambda$ this cuspidal representation of $G_{c_1}/G_{c_1+}$. The subgroup $G_{c_1}/K_{r+}$ is a parabolic subgroup of $K_r/K_{r+}$ with Levi factor $G_{c_1}/G_{c_1+}$. But any two unrefined minimal $K$-types are associate. Since $G_{c_1} \subseteq K_r$, a comparison of the semisimple ranks of their reductive quotients forces $G_{c_1} = K_r$ and hence $c_1 = c$. Therefore $\lambda$ is equivalent to $\tau_0$ and $c_1 = c$. Thus $\lambda \subset \rho$, and since $\rho$ is irreducible we obtain $\rho = \lambda$. Hence $\rho$ is cuspidal, and by definition $[c,\rho]$ is an unrefined minimal $K$-type.
\end{proof}

Combining the above two lemmas, we obtain:
\begin{proposition}
With notation as above, $\tau^{K_{r+}} \cong \tau_0$ as a representation of $K_r/K_{r+}$.
\end{proposition}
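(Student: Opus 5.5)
The plan is to combine the two preceding lemmas. Lemma~6.1 gives the multiplicity of $\tau_0$ in $\tau^{K_{r+}}$, and Lemma~6.2 shows that $\tau_0$ is the only isomorphism class that can occur there.

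\textbf{Step 1: finite-dimensionality and semisimplicity.} Since $\tau$ is irreducible and smooth, it is admissible. Hence $\tau^{K_{r+}}$ is finite-dimensional. It is a representation of the finite group $K_r/K_{r+}\simeq \operatorname{GL}_r(\mathbb{F}_q)$, because $K_{r+}$ is normal in $K_r$. Over $\mathbb{C}$ it is therefore semisimple, so we may write
\[
\tau^{K_{r+}}\simeq \bigoplus_i \rho_i
\]
with each $\rho_i$ irreducible.

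\textbf{Step 2: all constituents are isomorphic to $\tau_0$.} By Lemma~6.2, every $\rho_i$ is cuspidal and $[c,\rho_i]$ is an unrefined minimal $K$-type for $\tau$. Its proof shows more: each $\rho_i$ is equivalent to the cuspidal representation $\lambda$ attached to the facet $c_1=c$, and this $\lambda$ is equivalent to $\tau_0$.

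The one point needing care is that "associate" unrefined minimal $K$-types on the same facet $c$ are in fact isomorphic as representations of $K_r/K_{r+}$. Associativity in the Moy--Prasad sense allows conjugation by an element $g\in G_r$ with $g\cdot c=c$. For $\operatorname{GL}_r$ the stabilizer of the hyperspecial vertex $c$ is $F^\times K_r$. The factor $F^\times$ is central and so acts trivially by conjugation, while conjugation by $K_r$ preserves isomorphism classes. Hence every $\rho_i\simeq\tau_0$, and
\[
\tau^{K_{r+}}\simeq \tau_0^{\oplus m}\quad\text{for some } m\ge 0 .
\]

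\textbf{Step 3: determine $m$.} Because $\tau_0$ is inflated from $K_r/K_{r+}$, it is trivial on $K_{r+}$, and so
\[
\operatorname{Hom}_{K_r}(\tau_0,\tau)=\operatorname{Hom}_{K_r/K_{r+}}\bigl(\tau_0,\tau^{K_{r+}}\bigr).
\]
By Schur's lemma the right-hand side has dimension $m$. Lemma~6.1 says the left-hand side has dimension $1$, so $m=1$ and $\tau^{K_{r+}}\cong\tau_0$.

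I expect Step 2 to be the main obstacle, specifically turning "associate" into "isomorphic" for types on the same vertex. I would handle it with the stabilizer computation above, $N_{G_r}(K_r)=F^\times K_r$, which Theorem~5.2 already records.
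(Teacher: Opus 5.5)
Your proof is correct and follows the paper's route: the paper obtains the proposition simply by combining Lemma~6.1 (which gives $\dim\operatorname{Hom}_{K_r}(\tau_0,\tau)=1$) with Lemma~6.2 (all irreducible constituents of $\tau^{K_{r+}}$ are equivalent to $\tau_0$). Your extra check that associate depth-zero types on the same hyperspecial vertex are actually isomorphic, via $N_{G_r}(K_r)=F^\times K_r$, fills in a point the paper leaves implicit.
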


\section{Actions of two Hecke algebras}
In this section, we consider a special Bernstein block. Let $M$ be the Levi subgroup of $G_{nr}$ of the form $(G_r)^n$, and let $P=MU$ be a standard parabolic subgroup with Levi factor $M$. Let $M^F$ be the Levi subgroup of $G_{nr}^F$ of the form $(\operatorname{GL}_r(\mathbb{F}_q))^n$, and let $P^F = M^F U^F$ be the corresponding standard parabolic subgroup with Levi factor $M^F$.

Let $\tau$ be an irreducible depth-zero supercuspidal representation of $G_r$ whose associated type is $(K_r,\tau_0)$, where $\tau_0$ is an irreducible cuspidal representation of $\operatorname{GL}_r(\mathbb{F}_q)$; we abuse notation and regard it as a representation of $K_r$ via inflation. Consider $\tau \otimes \tau \otimes \cdots \otimes \tau$ ($n$ times), which is an irreducible depth-zero supercuspidal representation of $M$. We will study the Bernstein block associated to the cuspidal datum $[M, \tau \otimes \tau \otimes \cdots \otimes \tau \text{ ($n$ times)}]$. In Section~5 we constructed a type $(J,\rho)$ for this block, and we keep the same notation here.

Let $(\pi,V)$ be an irreducible representation in this Bernstein block. By definition, there exists an unramified character $\chi$ of $M$ such that $V$ is a subquotient of
$$
\operatorname{Ind}_P^{G_{nr}} \bigl( (\tau \otimes \tau \otimes \cdots \otimes \tau ) \otimes \chi \bigr).
$$
Taking $K_{nr+}$-fixed vectors and applying Theorem~4.3 together with Proposition~6.3, we see that every irreducible constituent of $V^{K_{nr+}}$ lies in the Harish-Chandra series
$$
\operatorname{Ind}_{P^F}^{G_{nr}^F} (\tau_0 \otimes \tau_0 \otimes \cdots \otimes \tau_0).
$$

Note that $\rho_M = \tau_0 \otimes \tau_0 \otimes \cdots \otimes \tau_0$ is trivial on $J_{M+}$ (the notation is the same as in Section~5). Since $\rho$ is trivial on $K_{nr+}$, we have
$$
V_{\rho} = \operatorname{Hom}_J(\rho, V) = \operatorname{Hom}_J(\rho, V^{K_{nr+}}) = \operatorname{Hom}_{P^F}(\rho_M, V^{K_{nr+}}),
$$
where we regard $\rho_M$ as a representation of $P^F$ by letting it be trivial on $U^F$.

Section~3 tells us that the algebra $H_1 = \operatorname{End}_{G_{nr}^F}\bigl(\operatorname{Ind}_{P^F}^{G_{nr}^F}(\rho_M)\bigr)^{\mathrm{op}}$ acts on $V_{\rho}$. Section~5 also tells us that the finite part (which we denote by $H_0$) acts on $V_\rho$. Next, we compare these two actions.

First, both of these finite Iwahori--Hecke algebras are isomorphic to $\mathcal{H}(S_n, q^r)$. Suppose that $H_1$ has a canonical basis $\{ T_w \mid w \in S_n \}$ and $H_0$ has a canonical basis $\{ [w] \mid w \in S_n \}$. Let $S$ be the set of simple reflections of $S_n$. To show that this abstract algebraic isomorphism also preserves the action, we only need to verify it on the generators, i.e., $T_s$ and $[s]$ for $s \in S$.

\begin{proposition}
$H_1$ is canonically isomorphic to $H_0$, and this isomorphism preserves the actions of the two algebras. In particular, the structure of $V^{K_{nr+}}$ as a $K_{nr}/K_{nr+}$-representation is determined by the $H_0$-module structure of $V_\rho$.
\end{proposition}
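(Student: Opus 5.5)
The plan is to compare the two actions directly on the common space $V_\rho=\operatorname{Hom}_{P^F}(\rho_M,V^{K_{nr+}})$, one simple reflection at a time. On the finite side, Section~3 already gives, for $\Phi_0\in\operatorname{Hom}_{P^F}(\rho_M,V^{K_{nr+}})$ and $v\in W$,
$$T_s*\Phi_0(v)=q^{(r^2+r)/2}\,e_{U^F}\,s\,\Phi_0(\gamma_s(v)).$$
On the type side I will use the Bushnell--Kutzko formula
$$f*\phi(w)=\int_{G_{nr}}\pi(g)\,\phi\bigl(\check{f(g)}w\bigr)\,dg.$$
I apply it to the element $[s]\in H_0$ supported on the double coset $JsJ$, with $s$ the block permutation matrix fixed earlier. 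Bushnell--Kutzko normalize $[s]$ by $[s](s)=\check\gamma_s$ (up to the convention on duals), where $\gamma_s$ intertwines $\rho$ with ${}^s\rho$ and is the tensor-factor permutation chosen in Section~3. The first step is therefore to write $JsJ=\bigsqcup_{j} j s J$ and evaluate the integral as a finite sum. Using that $\phi$ lands in $V^{K_{nr+}}$, I will reduce this sum to an average over $J/(J\cap sJs^{-1})$ of translates $j s\,\phi(\gamma_s w)$, multiplied by a volume constant $\operatorname{vol}(JsJ)/\operatorname{vol}(J)$ or its normalized version.

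The second step identifies this averaged operator with $e_{U^F}s$ on $V^{K_{nr+}}$. Modulo $K_{nr+}$, the group $J$ maps onto $P^F$, and $J\cap sJs^{-1}$ maps onto $P^F\cap sP^Fs^{-1}$. The $M^F$-part of the average is absorbed by the $\rho$-equivariance of $\phi$ together with the intertwining property of $\gamma_s$ (Lemma in Section~3). What remains is an average over $U^F/(U^F\cap sU^Fs^{-1})$. Since $\phi(w)$ is already $U^F$-invariant, this average equals $e_{U^F}s\,\phi(\gamma_s w)$ up to an explicit power of $q$. Hence $[s]$ and $T_s$ act on $V_\rho$ by proportional operators, $[s]=c_s T_s$, for some scalar $c_s$.

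The third step pins down the scalar through the quadratic relations. Both $[s]$ and $T_s$ satisfy $X^2=(q^r-1)X+q^r$ (Theorem~5.5 and Section~3). Substituting $c_sT_s$ gives
$$c_s^2\bigl((q^r-1)T_s+q^r\bigr)=c_s(q^r-1)T_s+q^r,$$
and since $T_s$ is not scalar in $H_1$, this forces $c_s=1$. I will also check $c_s$ against the explicit index $[U^F:U^F\cap sU^Fs^{-1}]=q^{r^2}$, as a consistency check on the normalization. Because the $T_s$ and the $[s]$ generate their algebras subject to the same braid and quadratic relations, the map $T_s\mapsto[s]$ defines a canonical isomorphism $H_1\cong H_0$ compatible with the actions.

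The final claim then follows from the finite-side theorem of Section~3. The functor $\Theta$ determines the multiplicity of each $V_\lambda$ in $V^{K_{nr+}}$ as the multiplicity of the corresponding simple $H_1$-module in $\operatorname{Hom}_{G^F}(\operatorname{Ind}\rho_M,V^{K_{nr+}})=V_\rho$, using semisimplicity. Via the isomorphism, this is read off from the $H_0$-module $V_\rho$.

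The main obstacle is the second step, namely keeping track of the dual and contragredient conventions ($\check f(g)$, the opposite ring, $\gamma_s$ versus $\gamma_s^{-1}$) and the Haar measure normalization. The danger is ending up with $[s]$ matching an element such as $-q^r T_s^{-1}$, which satisfies the same quadratic relation, instead of $T_s$. The proportionality argument rules this out, because it identifies the support of $[s]$ with the support of $T_s$, so only a scalar can intervene, and the quadratic relation then fixes that scalar.
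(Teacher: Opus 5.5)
Your proposal follows essentially the same route as the paper: you use the Section~3 formula for $T_s$, reduce the Bushnell--Kutzko integral for $[s]$ (via Schur's lemma for the intertwiner $[s](s)$ and $J/K_{nr+}\cong P^F$) to a multiple of $e_{U^F}s\,\Phi_0(\gamma_s v)$, and then use the common quadratic relation to force the proportionality constant to be $1$. The one point to state explicitly, as the paper does, is that $c_s$ is independent of $V$ and that the quadratic identity must be tested on a module where $T_s$ is not a scalar, e.g.\ $V_\rho\cong\operatorname{Hom}_{P^F}(\rho_M,\operatorname{Ind}_{P^F}^{G_{nr}^F}\rho_M)$, which is the regular $H_1$-module; this matters because on a one-dimensional $V_\rho$, such as the trivial module, the relation alone also allows $c_s=-q^{-r}$.
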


\begin{proof}
Let $\Phi_0 \in V_\rho$. For any $v \in \rho_M$, the end of Section~3 gives
$$
T_s * \Phi_0(v) = q^{r(r+1)/2} e_{U^F} s \, \Phi_0(\gamma_s(v)).
$$

The element $[s]$ is supported on $J s J = J_+ s J$. We have
$$
[s] * \Phi_0(v) = \int_{J_+ s J} \pi(g) \, \Phi_0\bigl( \check{([s](g))} v \bigr) \, dg.
$$
The function $g \mapsto \check{([s](g))}$ is supported on $J sJ$ and $\check{([s](s))}$ intertwines $\rho$ at $s$. More precisely, set $A = \check{([s](s))}$. Then $A$ satisfies
$$
\rho(s j s) A = A \rho(j) \quad \text{for every } j \in J_M.
$$
Since $\rho$ is trivial on $K_{nr+}$ and by Schur's lemma, $A$ can naturally be viewed as a scalar multiple of $\gamma_s$.

For any $j \in J$,
\begin{align*}
\pi(j) \Phi_0\bigl( \check{([s](sj))} v \bigr)
&=  \pi(j) \Phi_0\bigl( \rho(j^{-1}) \check{([s](s))} v \bigr) \\
&= \pi( j j^{-1}) \Phi_0( \check{([s](s))} v ) \\
&=  \Phi_0( \check{([s](s))} v ).
\end{align*}
Thus we can cancel the left $J$, hence the integral can be taken over $J_+$. We obtain
$$
[s] * \Phi_0(v) = \int_{J_+} \pi(js) \, \Phi_0\bigl( \check{([s](js))} v \bigr) \, dj \qquad \text{(up to a scalar)}.
$$
$$
= \int_{J_+} \pi(js) \, \Phi_0\bigl( \check{([s](s))}\rho(j^{-1}) v \bigr) \, dj = \int_{J_+} \pi(js) \, \Phi_0\bigl( \check{([s](s))} v \bigr) \, dj
$$
The last equality holds because $J_+/K_{nr+} \cong U^F$ and the elements of $U^F$ act trivially on $\rho$.

Since $J_+/K_{nr+} \cong U^F$ and $A$ is a scalar multiple of $\gamma_s$, we see that
$$
[s]  = a \, T_s
$$
for some constant $a$ (obviously this $a$ does not depend on $V$, since it depends only on the normalization of $[s]$).  Both $[s]$ and $T_s$ satisfy the same quadratic relation,
$$
[s]^2 = (q^r - 1)[s] + q^r, \qquad
T_s^2 = (q^r - 1) T_s + q^r.
$$

We now prove that $a=1$. Suppose $a \neq 1$. Then from $[s] = a T_s$ and the quadratic relations we deduce
$$
a^2 T_s^2 - a(q^r-1) T_s - q^r = 0, \qquad
T_s^2 - (q^r-1) T_s - q^r = 0.
$$
Subtracting the second equation from the first gives
$$
(a^2 - 1) T_s^2 - (a-1)(q^r-1) T_s = 0.
$$
Since $T_s$ is invertible, we can divide by $T_s$ to obtain
$$
(a+1) T_s - (q^r-1) = 0,
$$
which means that $T_s$ acts as the scalar $(q^r-1)/(a+1)$ on the space $V_\rho \cong \operatorname{Hom}_{P^F}(\rho_M, \operatorname{Ind}_{P^F}^{G_{nr}^F}(\rho_M))$ (this identification is shown at the beginning of the section).(And we have also said that a does not depend on V.) But this is impossible, because $T_s$ does not act as a scalar on this space. Hence $a = 1$, and $[s]$ and $T_s$ have the same action.
\end{proof}

\section{Review of Zelevinsky's segments}

We recall some fundamental notions from \cite{Zelevinsky1980}. Let $\mathbf{F}$ be a non-archimedean local field and $\mathbf{G}_n = \mathrm{GL}(n,\mathbf{F})$. For representations $\pi_i \in \mathrm{Alg}\,\mathbf{G}_{n_i}$ ($i=1,\dots,r$) we denote by
\[
\pi_1 \times \cdots \times \pi_r = i_{(n_1,\dots,n_r)}(\pi_1 \otimes \cdots \otimes \pi_r)
\]
the representation obtained by parabolic induction from the block-upper-triangular subgroup; this operation is referred to as the \emph{product} of the $\pi_i$.

\subsection{Segments and associated representations}
Let $\mathcal{C}$ be the collection of equivalence classes of \emph{irreducible cuspidal representations} of the groups $\mathbf{G}_n$ ($n\ge 1$). By a \emph{segment} in $\mathcal{C}$ we mean a subset of the form
\[
\Delta = \{\rho,\nu\rho,\nu^2\rho,\dots,\nu^k\rho = \rho'\},
\]
where $\rho\in\mathcal{C}$ is cuspidal, $\nu(g)=|\det g|$ and $k\ge 0$; we write $\Delta = [\rho,\rho']$. Given such a segment, Zelevinsky constructs an irreducible representation $\langle\Delta\rangle\in \mathrm{Irr}\,\mathbf{G}_{km+m}$ (with $\rho\in\mathrm{Irr}\,\mathbf{G}_m$) as the unique irreducible submodule of $\rho\times\nu\rho\times\cdots\times\rho'$. There is also a distinguished irreducible quotient of the same induced representation, which we denote by $\operatorname{St}(\langle\Delta\rangle)$. By Proposition~3.4 of \cite{Zelevinsky1980}, the Jacquet module of $\langle\Delta\rangle$ with respect to a Levi subgroup satisfies (n=mk+m)
\[
r_{(l,n-l),(n)}(\langle\Delta\rangle) = 
\begin{cases}
0 & \text{if } l \text{ is not a multiple of } m,\\
\langle[\rho,\nu^{p-1}\rho]\rangle \otimes \langle[\nu^p\rho,\rho']\rangle & \text{if } l = mp,\; 0\le p\le k+1.
\end{cases}
\]

\subsection{Linked segments and irreducibility criterion}
Two segments $\Delta_1 = [\rho_1,\rho_1']$ and $\Delta_2 = [\rho_2,\rho_2']$ are said to be \emph{linked} if neither is contained in the other and their union $\Delta_1\cup\Delta_2$ is again a segment. The following irreducibility criterion is Theorem~4.2 of \cite{Zelevinsky1980}:

\begin{theorem}[Zelevinsky, Theorem 4.2]
Let $\Delta_1,\dots,\Delta_r$ be segments in $\mathcal{C}$. The induced representation $\langle\Delta_1\rangle\times\cdots\times\langle\Delta_r\rangle$ is irreducible precisely when no two of the segments $\Delta_i,\Delta_j$ are linked.
\end{theorem}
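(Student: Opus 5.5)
This is Zelevinsky's theorem, and the paper cites it from \cite{Zelevinsky1980}. If I had to prove it from the tools at hand, I would work entirely with Jacquet modules, in two steps. First I would settle the case of two segments. Then I would reduce the general case to it by an induction on the total length $\sum_i|\Delta_i|$.

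\textbf{Reducibility when two segments are linked.} I would use the geometric lemma (Mackey theory for Jacquet modules) together with the formula for $r_{(l,n-l),(n)}(\langle\Delta\rangle)$ recalled above. These compute the semisimplified minimal Jacquet module of $\pi=\langle\Delta_1\rangle\times\cdots\times\langle\Delta_r\rangle$. Each $\langle\Delta\rangle$ has a one-dimensional minimal Jacquet module with exponent $\rho\otimes\nu\rho\otimes\cdots\otimes\rho'$. Hence $r_{\min}(\pi)$ is the sum, over all shuffles of the $r$ increasing words, of the shuffled tensor product.
\begin{itemize}
\item Suppose $\Delta_1,\Delta_2$ are linked, with $\Delta_1$ preceding $\Delta_2$. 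Then $\langle\Delta_1\rangle\times\langle\Delta_2\rangle$ and $\langle\Delta_1\cup\Delta_2\rangle\times\langle\Delta_1\cap\Delta_2\rangle$ share a constituent (this product is irreducible by induction on length).
\item I would exhibit two different constituents of $\langle\Delta_1\rangle\times\langle\Delta_2\rangle$. For this I compare the shuffle sets and note that the shuffle set of the second product is a proper subset of that of the first.
\item For general $r$ with $\Delta_i,\Delta_j$ linked, I would use exactness of induction and Jacquet functors to propagate this. The resulting two disjoint families of exponents in $r_{\min}(\pi)$ cannot both come from one irreducible constituent: a single irreducible subquotient must contain an exponent that "closes up" $\Delta_i\cup\Delta_j$.
\end{itemize}

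\textbf{Irreducibility when no two segments are linked.} I would follow Zelevinsky's pattern.
\begin{itemize}
\item First, show that for unlinked $\Delta_1,\Delta_2$ the product $\langle\Delta_1\rangle\times\langle\Delta_2\rangle$ is irreducible. Then $\langle\Delta_1\rangle\times\langle\Delta_2\rangle\simeq\langle\Delta_2\rangle\times\langle\Delta_1\rangle$, so $\pi$ is independent of the ordering of the segments.
\item Next, order the segments so that a particular exponent $e$ of $r_{\min}(\pi)$ (lexicographically extremal in the cuspidal line) occurs with multiplicity one. By Frobenius reciprocity, every irreducible submodule of $\pi$ contains $e$ in its Jacquet module.
\item Since $e$ has multiplicity one, $\pi$ has a unique irreducible submodule $\sigma$, and $\sigma$ occurs in $\pi$ with multiplicity one.
\item Apply the same argument to the contragredient. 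This uses $\widetilde{\langle\Delta\rangle}=\langle\widetilde\Delta\rangle$ and the fact that unlinkedness is preserved. It shows that $\sigma$ is also the unique irreducible quotient of $\pi$.
\item A representation of finite length whose unique submodule equals its unique quotient, and occurs with multiplicity one, must be irreducible.
\end{itemize}

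The main obstacle is the two-segment irreducibility in the unlinked case, in particular the nested case $\Delta_2\subset\Delta_1$ and the case $\Delta_1\cap\Delta_2=\emptyset$ with $\Delta_1\cup\Delta_2$ not a segment. There one must show that every exponent of $r_{\min}(\langle\Delta_1\rangle\times\langle\Delta_2\rangle)$ already lies in the Jacquet module of the unique submodule. I would do this by an induction on $|\Delta_1|+|\Delta_2|$. At each step, apply $r_{(m,n-m)}$ and Proposition~3.4 to peel off the first cuspidal $\rho$ of the shorter or leftmost segment. This reduces to a product of shorter unlinked segments, where the induction hypothesis applies. The bookkeeping of which shuffles survive is where care is needed.
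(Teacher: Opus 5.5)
The paper does not prove this statement; it quotes it from Zelevinsky. Your sketch therefore has to stand on its own. The reducibility half is fine in outline, but the irreducibility half has a genuine gap at its base.

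\textbf{The cuspidal base case.} When all segments have length one, the statement is the Bernstein--Zelevinsky criterion for products of cuspidal representations. It cannot be extracted from the tools you allow yourself (geometric lemma, Frobenius reciprocity, contragredients, exactness). Compare $\rho\times\nu\rho$ with $\rho\times\nu^{2}\rho$. In both cases $r_{\min}$ consists of two distinct exponents, each of multiplicity one. Your argument then yields a unique submodule containing $\rho\otimes\nu^{a}\rho$ and a unique quotient containing $\nu^{a}\rho\otimes\rho$. Nothing in the Jacquet-module bookkeeping decides whether these coincide: for $a=1$ they do not, for $a=2$ they do.

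Your peeling induction has nothing to peel in this case. Invoking $\langle\Delta_1\rangle\times\langle\Delta_2\rangle\simeq\langle\Delta_2\rangle\times\langle\Delta_1\rangle$ would be circular, because you derive that isomorphism from the two-segment irreducibility you are trying to prove. The same example shows the contragredient step does not exhibit $\sigma$ as the unique quotient. The quotient it produces is characterized by the reversed concatenation $w_r\cdots w_1$, not by $e$, and identifying it with $\sigma$ is exactly the missing content. The cuspidal case has to be imported; Zelevinsky builds on Bernstein--Zelevinsky here rather than reproving it.

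\textbf{Repeated segments.} Your mechanism needs an exponent of multiplicity one, and for repeated segments there is none. Take $\pi=\langle\Delta\rangle\times\langle\Delta\rangle$, or already $\rho\times\rho$. Exchanging the roles of the two identical words is a fixed-point-free involution on shuffles. Hence every exponent of $r_{\min}(\pi)$ has even multiplicity, whatever the ordering, including the concatenation supplied by Frobenius reciprocity. Peeling does not rescue this. Let $\rho$ be the first cuspidal of $\Delta$ and $\Delta^-=\Delta\setminus\{\rho\}$. Up to semisimplification, the $\rho$-part of $r_{(m,n-m)}(\pi)$ contains $\rho\otimes(\langle\Delta^-\rangle\times\langle\Delta\rangle)$ twice. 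Frobenius reciprocity places only one copy in the Jacquet module of a submodule. Whether the second copy is there is a splitting question of the same nature as for $\rho\times\rho$, which formal bookkeeping cannot settle.

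To close both gaps you need two things:
\begin{enumerate}
\item the irreducibility of $\rho_1\times\cdots\times\rho_k$ for pairwise unlinked cuspidals, taken as external input;
\item a version of the Jacquet-module argument run with Levi subgroups such as $G_{km}\times G_{n-km}$, where equal cuspidals are grouped into the irreducible block $\rho^{\times k}$, rather than with minimal exponents.
\end{enumerate}

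\textbf{Minor points in the reducibility half.}
\begin{enumerate}
\item The irreducibility of $\langle\Delta_1\cup\Delta_2\rangle\times\langle\Delta_1\cap\Delta_2\rangle$ is the nested case of the other direction. It is not an instance of induction on total length, since that length is unchanged.
\item You still owe an argument that this representation actually occurs as a constituent of $\langle\Delta_1\rangle\times\langle\Delta_2\rangle$.
\item The passage to general $r$ needs only exactness of induction and commutativity of $\times$ in the Grothendieck group.
\end{enumerate}
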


\subsection{Classification of irreducible representations}
Let $\mathcal{O}$ denote the set of all finite multisets consisting of segments. For an element $a = \{\Delta_1,\dots,\Delta_r\}\in\mathcal{O}$, pick an ordering such that $\Delta_i$ does not precede $\Delta_j$ whenever $i<j$. Then the product $\langle\Delta_1\rangle\times\cdots\times\langle\Delta_r\rangle$ admits a unique irreducible submodule, which we write as $\langle a\rangle$. Theorem~6.1 states:

\begin{theorem}[Zelevinsky, Theorem 6.1]
\begin{enumerate}
\item Let $\Delta_1,\ldots,\Delta_r$ be segments in $\mathcal{C}$. Assume that for every pair of indices $i<j$, the segment $\Delta_i$ does not precede $\Delta_j$. Then the representation $\langle\Delta_1\rangle\times\ldots\times\langle\Delta_r\rangle$ contains a unique irreducible submodule; this submodule is denoted by $\langle\Delta_1,\ldots,\Delta_r\rangle$.
\item Two such representations $\langle\Delta_1,\ldots,\Delta_r\rangle$ and $\langle\Delta_1',\ldots,\Delta_s'\rangle$ are isomorphic if and only if the multisets $(\Delta_1,\ldots,\Delta_r)$ and $(\Delta_1',\ldots,\Delta_s')$ coincide up to permutation.
\item Every irreducible representation of $\mathbf{G}_n$ is isomorphic to some $\langle\Delta_1,\ldots,\Delta_r\rangle$.
\end{enumerate}
\end{theorem}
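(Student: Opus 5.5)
The plan is to follow Zelevinsky's original strategy. Everything rests on the geometric lemma (Bernstein--Zelevinsky) for Jacquet modules of products, together with the formula for $r_{(l,n-l),(n)}(\langle\Delta\rangle)$ recalled above. For a segment $\Delta=[\rho,\rho']$, iterating that formula shows that the minimal-Levi Jacquet module of $\langle\Delta\rangle$ is the single cuspidal term $\rho'\otimes\cdots\otimes\nu\rho\otimes\rho$, i.e.\ the segment read in decreasing order. For a product $\pi=\langle\Delta_1\rangle\times\cdots\times\langle\Delta_r\rangle$, the geometric lemma then expresses $r_{(n_1,\dots,n_r)}(\pi)$, in the Grothendieck group, as a sum over ``shuffles'' that cut each $\Delta_i$ into a beginning and an end and redistribute these pieces among the blocks.

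\textbf{Part 1 (unique submodule).} The key combinatorial claim is that, under the hypothesis that $\Delta_i$ does not precede $\Delta_j$ for $i<j$, the term $\langle\Delta_1\rangle\otimes\cdots\otimes\langle\Delta_r\rangle$ occurs in $r_{(n_1,\dots,n_r)}(\pi)$ with multiplicity exactly one. The non-identity shuffles produce tensor products whose cuspidal supports differ block by block, and the ordering hypothesis is precisely what rules out a coincidence. Now let $\sigma\subset\pi$ be any irreducible submodule. Since $\pi$ is induced from $\langle\Delta_1\rangle\otimes\cdots\otimes\langle\Delta_r\rangle$, Frobenius reciprocity gives a nonzero map $r(\sigma)\to\langle\Delta_1\rangle\otimes\cdots\otimes\langle\Delta_r\rangle$. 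Exactness of $r$ therefore forces this term to occur in $r(\sigma)$. Two distinct irreducible submodules, or a non-simple socle, would then produce multiplicity at least two, contradicting the claim. Hence $\pi$ has a unique irreducible submodule, namely $\langle a\rangle$.

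\textbf{Part 2 (injectivity).} I would introduce Zelevinsky's partial order on multisets with fixed support, in which $b<a$ when $b$ is obtained from $a$ by elementary operations replacing a linked pair $\Delta,\Delta'$ by $\Delta\cup\Delta',\Delta\cap\Delta'$. The aim is unitriangularity: in the Grothendieck group,
$$
\pi(a)=\langle a\rangle+\sum_{b<a}m(b;a)\langle b\rangle .
$$
To prove it, I would identify for each $a$ a distinguished term of its Jacquet module (the ``maximal'' shuffle) that occurs in $\pi(b)$ only when $b\le a$, and use induction on the order. Unitriangularity implies that $\langle a\rangle\cong\langle a'\rangle$ forces $a=a'$, because the $\langle a\rangle$ then form a basis change of the $\pi(a)$.

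\textbf{Part 3 (exhaustion).} This is the step I expect to be the main obstacle. Any irreducible $\sigma$ has a nonzero minimal Jacquet module, so by Frobenius reciprocity it embeds in some $\rho_1\times\cdots\times\rho_n$ with $\rho_i$ cuspidal. I would argue by a counting argument in the Grothendieck group on the fixed support $\omega=\{\rho_1,\dots,\rho_n\}$. The classes $\pi(a)$, for $a$ ranging over multisets with support $\omega$, are triangular with respect to the $\langle a\rangle$. Comparing minimal Jacquet modules, every irreducible constituent of $\rho_1\times\cdots\times\rho_n$ has its minimal Jacquet module built from the same cuspidal terms. I would then show that the ``leading'' cuspidal term of any irreducible subquotient determines a multiset $a$, obtained by grouping consecutive $\nu$-steps into segments, such that the irreducible coincides with $\langle a\rangle$. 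This forces every irreducible with support $\omega$ to be of the form $\langle a\rangle$. If this direct identification proves delicate, the fallback is Zelevinsky's original route via derivatives: induct on $n$, using that the highest derivative of $\sigma$ is irreducible and determines $\sigma$ up to the segments removed.
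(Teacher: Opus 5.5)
The paper gives no proof of this statement; it only quotes Zelevinsky's Theorem~6.1. Your outline therefore has to stand on its own, and it has genuine gaps in all three parts.

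\textbf{Part 1.} Your key claim is false as stated, even for distinct segments. Take $\rho$ cuspidal on $\mathbf{G}_m$ and the admissible order $([\rho,\nu\rho],\{\nu\rho\})$; the segments are nested, hence unlinked. The geometric lemma gives
$r_{(2m,m)}(\langle[\rho,\nu\rho]\rangle\times\nu\rho)=\langle[\rho,\nu\rho]\rangle\otimes\nu\rho+(\rho\times\nu\rho)\otimes\nu\rho$.
The second term has the same supports block by block and contains $\langle[\rho,\nu\rho]\rangle\otimes\nu\rho$ again, so the multiplicity is $2$. Repeated segments cause the same problem: $r_{(m,m)}(\rho\times\rho)=2\,\rho\otimes\rho$, because transpositions of equal segments always add copies. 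The products are irreducible in both examples, so the theorem survives, but your socle argument does not.

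What rescues the argument is a normalization you have not made.
\begin{itemize}
\item By Theorem~4.2 the isomorphism class of $\langle\Delta_1\rangle\times\cdots\times\langle\Delta_r\rangle$ does not depend on the admissible order, since two admissible orders differ by adjacent swaps of unlinked segments.
\item So pass to the order that, on each cuspidal line, has non-increasing beginnings, with longer segments first among equal beginnings.
\item Group equal segments into the irreducible factors $\langle\Delta\rangle^{\times m_\Delta}$.
\end{itemize}
For this order, block~1 can only be fed by the copies of $\Delta_1$ taken as whole segments: its beginning is maximal, and its end is maximal among segments with that beginning. Induction on the blocks then gives the multiplicity one you want.

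Note also that, with the conventions recalled in the paper, the minimal Jacquet module of $\langle\Delta\rangle$ is the increasing word $\rho\otimes\nu\rho\otimes\cdots\otimes\rho'$, not the decreasing one. It is a quotient by Frobenius reciprocity, since $\langle\Delta\rangle\hookrightarrow\rho\times\cdots\times\rho'$, and the separation argument depends on this orientation.

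\textbf{Parts 2 and 3.} As planned, these are circular. The identity $\pi(a)=\langle a\rangle+\sum_{b<a}m(b;a)\langle b\rangle$ in the Grothendieck group presupposes that every constituent of $\pi(a)$ is some $\langle b\rangle$, i.e.\ part~(3). This is Zelevinsky's Theorem~7.1, which comes after~6.1. Yet your Part~3 uses exactly this triangularity.

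Your leading-term property also points the wrong way. For $a=\{[\rho,\nu\rho]\}$ and $b=\{\{\nu\rho\},\{\rho\}\}$ one has $a<b$, yet $\pi(a)=\langle a\rangle$ is a constituent of $\pi(b)=\nu\rho\times\rho$. So every term of $r(\pi(a))$ occurs in $r(\pi(b))$.

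What (2) needs is a term $T_a$ of $r(\langle a\rangle)$ itself whose occurrence in $r(\pi(b))$ forces $a\le b$. The natural candidate is the concatenation of the increasing words of the segments in an admissible order, which is a quotient of $r_{\min}(\langle a\rangle)$ by Frobenius reciprocity. Then $\langle a\rangle\cong\langle a'\rangle$ gives $a\le a'\le a$ directly, with no appeal to exhaustion.

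For Part~3, a counting argument would need an a priori upper bound on the number of irreducibles with support $\omega$, which you do not have. The sentence ``the irreducible coincides with $\langle a\rangle$'' is the entire content of~(3). Take a suitably extremal embedding $\sigma\hookrightarrow\rho_1\times\cdots\times\rho_n$ and regroup it into admissibly ordered segments $\Delta_1,\dots,\Delta_r$. You must then show that $r(\sigma)$ has $\langle\Delta_1\rangle\otimes\cdots\otimes\langle\Delta_r\rangle$ as a quotient, rather than some other constituent of the corresponding products of cuspidals. That requires controlling the Jacquet modules of the other constituents of $\rho\times\nu\rho\times\cdots\times\nu^k\rho$. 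Only then does Part~1 give $\sigma\cong\langle a\rangle$. The derivative fallback is only named, not carried out.
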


\subsection{Decomposition numbers and partial order}
For $a,b\in\mathcal{O}$, write $m(b;a)$ for the multiplicity of $\langle b\rangle$ as a subquotient in the Jordan--H\"older series of $\pi(a)=\langle\Delta_1\rangle\times\cdots\times\langle\Delta_r\rangle$. An \emph{elementary operation} on a multiset $a$ replaces two linked segments $\Delta,\Delta'$ by the pair $\Delta\cup\Delta'$ and $\Delta\cap\Delta'$ (with the latter omitted if it is empty). We define a partial order by $b\le a$ if $b$ can be reached from $a$ through a finite sequence of elementary operations. Theorem~7.1 gives:

\begin{theorem}[Zelevinsky, Theorem 7.1]
$m(b;a)\neq 0$ if and only if $b\le a$. Moreover, $m(a;a)=1$ for all $a\in\mathcal{O}$.
\end{theorem}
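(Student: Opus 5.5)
The plan is to follow Zelevinsky's original strategy, since the statement is his Theorem~7.1. I will work throughout in the Grothendieck group and use two tools: the geometric lemma (the Bernstein--Zelevinsky computation of Jacquet modules of products), and the Jacquet modules of the $\langle\Delta\rangle$ recalled above, which come from Proposition~3.4 of \cite{Zelevinsky1980}. I take Theorem~6.1 as given. It tells us that $\langle a\rangle$ is the unique irreducible submodule of $\pi(a)$, and that the $\langle b\rangle$, $b\in\mathcal O$, are pairwise distinct.

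First I will show that $m(a;a)=1$ and that $m(b;a)\neq0$ forces $b\le a$. The natural invariant is a minimal Jacquet module. For $a$, order the segments so that no segment precedes a later one, and fix the Levi $M_a$ adapted to the sequence of beginnings of the segments in $a$. Using the geometric lemma together with the Jacquet module formula for $\langle\Delta\rangle$, I will compute $r_{M_a}(\pi(a))$. I expect it to contain a distinguished cuspidal constituent, namely the ``beginning'' word attached to $a$, with multiplicity exactly one. That constituent occurs in $r_{M_a}\langle a\rangle$, since Frobenius reciprocity applied to the embedding $\langle a\rangle\hookrightarrow\pi(a)$ gives it. This yields $m(a;a)=1$.

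For the implication $m(b;a)\ne0\Rightarrow b\le a$, I will argue by contradiction. Suppose $\langle b\rangle$ occurs in $\pi(a)$. Then the distinguished word of $b$ appears in the appropriate Jacquet module of $\pi(a)$. Every term produced by the geometric lemma corresponds to a way of cutting each segment of $a$ into pieces and reassembling them. A combinatorial analysis should then show that the multiset obtained by reading off the word of $b$ arises from $a$ by elementary operations, i.e.\ that $b\le a$. I expect this combinatorial step to be the main obstacle. What makes it hard is the need for an explicit combinatorial description of $\le$; the natural candidate is the condition that for all cuspidals $\rho\le\rho'$ on the line, the number of segments of $b$ containing $[\rho,\rho']$ is at least the corresponding count for $a$, with equal total support. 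I would first prove that $\le$ is equivalent to this condition, by an induction on the number of segments, and then verify the condition via the geometric lemma.

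For the converse, $b\le a\Rightarrow m(b;a)\ne0$, transitivity of $\le$ reduces the claim to a single elementary operation. Let $b$ be obtained from $a$ by replacing linked segments $\Delta,\Delta'$ with $\Delta\cup\Delta'$ and $\Delta\cap\Delta'$. I will use the known fact that $\langle\Delta\rangle\times\langle\Delta'\rangle$ has exactly two constituents, one of which is $\langle\Delta\cup\Delta'\rangle\times\langle\Delta\cap\Delta'\rangle$. By Theorem~4.2 in the two-segment case this is reducible, and its length is computed from Jacquet modules. Commutativity of $\times$ in the Grothendieck group then lets me write $\pi(a)\ge\pi(b)$ as an inequality of nonnegative combinations. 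Since $\pi(b)$ contains $\langle b\rangle$, it follows that $\langle b\rangle$ occurs in $\pi(a)$. Iterating along a chain of elementary operations finishes the proof.
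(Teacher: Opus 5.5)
The step that fails is your proof of $m(a;a)=1$. No cuspidal word can be guaranteed to occur with multiplicity one in the minimal Jacquet module of $\pi(a)$, whatever word you choose. For $a=\{[\rho],[\rho]\}$ one has $\pi(a)=\rho\times\rho$ and $r(\rho\times\rho)=2\,\rho\otimes\rho$. More generally, a repeated segment in $a$ makes the multiplicity of every word even, since swapping the two copies in the geometric lemma is a fixed-point-free involution on the terms. Frobenius reciprocity then only gives $m(a;a)\le$ (multiplicity of the word), which proves nothing.

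The repair is to stop above the cuspidal level.
\begin{itemize}
\item On each cuspidal line, list $a$ in weakly decreasing order of beginnings. Let $a_1,\dots,a_k$ be the groups of segments with a common beginning $\beta_1>\dots>\beta_k$.
\item Put $M=\prod_j G_{\deg a_j}$ and $\sigma=\pi(a_1)\otimes\cdots\otimes\pi(a_k)$. Segments with a common beginning are nested, so each $\pi(a_j)$ is irreducible by Zelevinsky's Theorem~4.2. Also $\pi(a)=i_M(\sigma)$.
\item In the geometric lemma for $r_M\pi(a)$, the first block receives initial pieces of segments. The minimum of $\operatorname{supp}(a_1)$ is the largest beginning $\beta_1$. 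Matching cuspidal supports (count the copies of $\beta_1$, then compare total sizes) forces these pieces to be exactly the whole segments of $a_1$. The same argument applies inductively to the later blocks.
\item Hence $\sigma$ occurs exactly once in $r_M\pi(a)$. Frobenius reciprocity applied to $\langle a\rangle\hookrightarrow i_M(\sigma)$ puts $\sigma$ in $r_M\langle a\rangle$, and this gives $m(a;a)=1$.
\end{itemize}

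The paper itself gives no proof here; it simply quotes Zelevinsky, so your argument must stand on its own. Your ``if'' direction is the standard one and is fine. What is transitive along a chain is the inequality $\pi(a)-\pi(b)\ge 0$ in the Grothendieck group, not the statement that $\langle b\rangle$ occurs in $\pi(a)$. That inequality comes from the length-two result for two linked segments, multiplied by the remaining factors.

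For ``only if'', the representation-theoretic step is easier than you expect if you choose the word carefully. Concatenate the increasing words of the segments of $b$ in weakly decreasing order of beginnings.
\begin{itemize}
\item This word $w(b)$ lies in $r_{\min}\langle b\rangle$. Hence it is a shuffle of the increasing words of the segments of $a$.
\item For each $\Delta\in a$ containing $[i,j]$, the letter $j$ of its chain $i,\dots,j$ sits in some block $\Delta'$ of $w(b)$.
\item Distinct $\Delta$ give distinct blocks, because a block contains $j$ at most once.
\item The letter $i$ of the chain lies in the same block or an earlier one. So the beginning of $\Delta'$ is at most $i$, and $\Delta'\supseteq[i,j]$.
\end{itemize}
Your rank inequalities therefore follow at once. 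The real content left is the purely combinatorial converse: rank inequalities with equal support imply $b\le a$. Your ``induction on the number of segments'' still has to be made precise at that point. For example, show that if $a\neq b$ satisfy the inequalities, then some elementary operation $a\to a'$ preserves them.
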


\begin{corollary}
\label{cor:expansion-Zelevinsky}
Let $a = \{\Delta_1,\dots,\Delta_r\}$ be an ordered multiset of segments. Then there exist integers $c_b$ such that
\[
\langle a \rangle = \sum_{b \le a} c_b \, \pi(b),
\]
and moreover $c_a = 1$.
\end{corollary}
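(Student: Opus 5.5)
This is a formal inversion of the unitriangular decomposition matrix from Theorem~7.1 (Zelevinsky), carried out in the Grothendieck group of the finite-length representations with a fixed cuspidal support.

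First, fix the setting. Fix the cuspidal support of $a$ (as a multiset of cuspidals). Every $b$ obtained from $a$ by elementary operations has the same cuspidal support, since replacing $\Delta,\Delta'$ by $\Delta\cup\Delta'$ and $\Delta\cap\Delta'$ preserves the multiset of cuspidals. Hence the set $S_a=\{b\in\mathcal O : b\le a\}$ is finite: there are only finitely many multisegments with a given cuspidal support. The order $\le$ restricted to $S_a$ is a partial order, and $S_a$ is downward closed.

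Second, set up the triangular system. By Theorem~7.1, for every $d\in S_a$ we have, in the Grothendieck group,
\[
\pi(d)=\sum_{b\le d} m(b;d)\,\langle b\rangle, \qquad m(d;d)=1 .
\]
All $b\le d$ again lie in $S_a$ by transitivity. Choose a linear extension of $\le$ on $S_a$. The matrix $(m(b;d))_{b,d\in S_a}$ is then upper unitriangular with integer entries. Its inverse is therefore also upper unitriangular with integer entries (determinant $1$, computed by back substitution). Write the inverse as $(c_{b,d})$. Then
\[
\langle d\rangle=\sum_{b\le d} c_{b,d}\,\pi(b), \qquad c_{d,d}=1 .
\]
Moreover $c_{b,d}\neq 0$ only if $b\le d$, because the inverse of a matrix supported on the incidence relation of a poset is supported on the same relation (Möbius-type inversion in the incidence algebra).

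Third, conclude. Take $d=a$ and set $c_b:=c_{b,a}$; this gives the asserted expansion with $c_a=1$. I would do the support claim by induction along the linear extension rather than via incidence algebras. Write $\pi(a)=\langle a\rangle+\sum_{b<a}m(b;a)\langle b\rangle$. Solve for $\langle a\rangle$ and substitute the inductively known expansions of each $\langle b\rangle$, $b<a$, in terms of $\pi(e)$ with $e\le b<a$. Integrality and the support condition $e\le a$ are preserved at each step.

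The only point requiring care, and the one I expect to be the main obstacle, is the finiteness and downward closure of $S_a$, which make the induction well-founded. Finiteness follows from the preserved cuspidal support: segments are subsets of the finite support multiset, and the number of segments is bounded by its size. The ordering convention "does not precede" plays no role here, since $\pi(b)$ is well-defined in the Grothendieck group regardless of order.
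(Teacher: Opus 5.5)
Your proposal is correct and follows the paper's argument. Both invert the unitriangular relation $\pi(a)=\langle a\rangle+\sum_{b<a}m(b;a)\langle b\rangle$ from Theorem~7.1 by induction on $\le$, substituting the inductively known expansions of $\langle b\rangle$ for $b<a$; you additionally check that $\{b : b\le a\}$ is finite (because cuspidal support is preserved), which makes the induction well-founded, a point the paper leaves implicit, and your version absorbs the paper's separate base case (minimal $a$, via Theorem~4.2) as the empty-sum case of Theorem~7.1.
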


\begin{proof}
If $a$ is minimal with respect to the partial order $\le$, then the segments in $a$ are pairwise unlinked, and by Theorem~4.2 we have $\langle a \rangle = \pi(a)$. Hence the statement holds with $c_a=1$ and all other coefficients zero.

In general, Theorem~7.1 gives the decomposition of the standard module
\[
\pi(a) = \sum_{b \le a} m(b;a) \, \langle b \rangle,
\]
with $m(a;a)=1$. Rewriting, we obtain
\[
\langle a \rangle - \pi(a) = \sum_{b < a} (-m(b;a)) \, \langle b \rangle .
\]
The right-hand side involves only multisets strictly smaller than $a$. By induction on the partial order $\le$, each $\langle b \rangle$ with $b<a$ can be expressed as a linear combination of $\pi(c)$ with $c \le b$. Substituting these expressions yields $\langle a \rangle$ as a linear combination of $\pi(b)$ with $b \le a$, and the coefficient of $\pi(a)$ is $1$.
\end{proof}

\subsection{Duality and non-degenerate representations}
There exists an involutive automorphism $\omega\mapsto \operatorname{St}(\omega)$ of the Grothendieck ring $\mathcal{R}$ (the representation bialgebra) uniquely characterised by sending $\langle\Delta\rangle$ to $\operatorname{St}(\langle\Delta\rangle)$ for every segment $\Delta$. This involution interchanges submodules and quotients. For the class of non-degenerate representations, Theorem~9.7 provides a classification in terms of the representations $\operatorname{St}(\langle\Delta\rangle)$:

\begin{theorem}[Zelevinsky, Theorem 9.7]
\begin{enumerate}
\item For any $a=\{\Delta_1,\dots,\Delta_r\}\in\mathcal{O}$, the product $\operatorname{St}(\langle\Delta_1\rangle)\times\cdots\times \operatorname{St}(\langle\Delta_r\rangle)$ is non-degenerate. It is irreducible exactly when the segments $\Delta_1,\dots,\Delta_r$ are pairwise unlinked.
\item Every irreducible non-degenerate representation $\omega$ of $\mathbf{G}_n$ can be expressed uniquely as a product $\omega = \operatorname{St}(\langle\Delta_1\rangle)\times\cdots\times \operatorname{St}(\langle\Delta_r\rangle)$, where the segments $\Delta_i$ are pairwise not linked.
\end{enumerate}
\end{theorem}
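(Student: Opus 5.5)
We plan to reduce both parts to results already recorded, chiefly Theorem~4.2 of Zelevinsky and the properties of the involution $\operatorname{St}$ on $\mathcal R$, together with two standard facts about Whittaker models. The first is Rodier's heredity: if $\pi_i$ are generic, then $\pi_1\times\cdots\times\pi_r$ has a one-dimensional space of Whittaker functionals. The second is the theory of Bernstein--Zelevinsky derivatives, in which an irreducible $\omega$ is non-degenerate exactly when its highest derivative is $\omega^{(n)}=1$.

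\textbf{Part 1.} The involution $\operatorname{St}$ is a ring automorphism of $\mathcal R$ sending irreducibles to irreducibles. Hence, in the Grothendieck group,
$$\operatorname{St}(\langle\Delta_1\rangle)\times\cdots\times\operatorname{St}(\langle\Delta_r\rangle)=\operatorname{St}\bigl(\langle\Delta_1\rangle\times\cdots\times\langle\Delta_r\rangle\bigr),$$
and the two products have the same number of irreducible constituents. Irreducibility is therefore equivalent to irreducibility of $\langle\Delta_1\rangle\times\cdots\times\langle\Delta_r\rangle$, which Theorem~4.2 characterizes by pairwise unlinkedness. For non-degeneracy, we first check that each $\operatorname{St}(\langle\Delta\rangle)$ is generic. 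Its highest derivative is computed from the dual of Proposition~3.4: the derivative of $\operatorname{St}(\langle[\rho,\rho']\rangle)$ is $\operatorname{St}(\langle[\nu\rho,\rho']\rangle)$ at the level $m$, and iterating this reaches $1$. Rodier's heredity (equivalently, the Leibniz rule for derivatives) then gives genericity of the product.

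\textbf{Part 2.} Let $\omega$ be irreducible and non-degenerate, with cuspidal support $\rho_1,\dots,\rho_k$. Then $\omega$ is a constituent of $\rho_1\times\cdots\times\rho_k=\operatorname{St}(\langle\{\rho_1\}\rangle)\times\cdots$. By heredity this product has exactly one generic constituent, so $\omega$ is that constituent.

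For \emph{existence}, we start from the multisegment of singletons and perform elementary operations. The key claim is: for linked $\Delta,\Delta'$, the unique generic constituent of $\operatorname{St}(\langle\Delta\rangle)\times\operatorname{St}(\langle\Delta'\rangle)$ is
$$\operatorname{St}(\langle\Delta\cup\Delta'\rangle)\times\operatorname{St}(\langle\Delta\cap\Delta'\rangle).$$
This is the step I expect to be the main obstacle. I would prove it by applying $\operatorname{St}$ to Zelevinsky's length-two decomposition $\langle\Delta\rangle\times\langle\Delta'\rangle=\langle\Delta,\Delta'\rangle+\langle\Delta\cup\Delta'\rangle\times\langle\Delta\cap\Delta'\rangle$ (Theorem~7.1 with $m=1$). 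Then, using derivatives, I would check that the image of $\langle\Delta,\Delta'\rangle$ has highest derivative of positive degree, so it is degenerate. Knowing this claim, the generic constituent of a product $\prod\operatorname{St}(\langle\Delta_i\rangle)$ is contained in the product obtained after any elementary operation. Since elementary operations strictly decrease the multisegment in the order $\le$, the process terminates at a pairwise unlinked multisegment $a$. By Part~1 the product attached to $a$ is irreducible, and it contains the generic constituent, so it equals $\omega$.

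For \emph{uniqueness}, suppose $\omega=\prod\operatorname{St}(\langle\Delta_i\rangle)$ with the $\Delta_i$ pairwise unlinked. The highest derivative of $\omega$ then corresponds to removing the first element of each segment, giving $\prod\operatorname{St}(\langle\Delta_i^-\rangle)$, which is again an unlinked product. Induction on $n$ recovers the multiset $\{\Delta_i^-\}$ from $\omega$. Together with the level data of the derivative, which gives the removed cuspidals, this recovers $\{\Delta_i\}$. Hence two unlinked multisegments giving the same $\omega$ coincide.
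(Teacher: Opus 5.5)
The paper gives no proof of this statement; it only quotes Zelevinsky, so your argument has to stand on its own. Part~1 and the existence half of Part~2 work, granting as the paper does that $\operatorname{St}$ preserves irreducibility. The uniqueness step, however, is wrong as written.

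For a non-degenerate irreducible $\omega$ of $\mathbf G_n$, the highest derivative is $\omega^{(n)}=1$, the trivial representation of $\mathbf G_0$. It carries no information about the segments. Write $\Delta_i=[\rho_i,\rho_i']$ with $\rho_i\in\operatorname{Irr}\mathbf G_{m_i}$. Then $\prod_i\operatorname{St}(\langle\Delta_i^-\rangle)$ is only one constituent of the derivative of level $k=\sum_i m_i$. By the Leibniz rule, that derivative is $\sum\prod_i\operatorname{St}(\langle\Delta_i\rangle)^{(p_im_i)}$, summed over all $(p_i)$ with $\sum_i p_im_i=k$. For example, take $\omega=\operatorname{St}(\langle[\chi,\nu\chi]\rangle)\times\nu^3\chi$ with $\chi$ a character of $\mathbf G_1$; then $\omega^{(2)}$ has the two constituents $\nu\chi$ and $\nu^3\chi$. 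So there is no irreducible $\prod_i\operatorname{St}(\langle\Delta_i^-\rangle)$ to run your induction on. You have transported Zelevinsky's highest-derivative formula for $\langle a\rangle$, which lives on the other side of the duality.

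The repair uses only tools you already invoke. Suppose $\prod\operatorname{St}(\langle\Delta_i\rangle)\cong\prod\operatorname{St}(\langle\Delta'_j\rangle)$ with both families pairwise unlinked. Apply $\operatorname{St}$ to get $\prod\langle\Delta_i\rangle=\prod\langle\Delta'_j\rangle$ in $\mathcal R$. Both sides are irreducible by Zelevinsky's Theorem~4.2, so they are $\langle a\rangle$ and $\langle a'\rangle$, and Zelevinsky's Theorem~6.1(2) gives $a=a'$. Alternatively: by the unitriangularity in Theorem~7.1, $\mathcal R$ is a polynomial ring in the classes $\langle\Delta\rangle$, hence also in the $\operatorname{St}(\langle\Delta\rangle)$, and distinct multisets give distinct monomials.

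There are also two smaller points. First, the step you single out as the main obstacle is immediate. Theorem~7.1 already shows that $\langle\Delta\cup\Delta'\rangle\times\langle\Delta\cap\Delta'\rangle$ occurs in $\langle\Delta\rangle\times\langle\Delta'\rangle$. After applying $\operatorname{St}$, the product $\operatorname{St}(\langle\Delta\cup\Delta'\rangle)\times\operatorname{St}(\langle\Delta\cap\Delta'\rangle)$, which is generic by Part~1, occurs in $\operatorname{St}(\langle\Delta\rangle)\times\operatorname{St}(\langle\Delta'\rangle)$. Since the generic constituent there has multiplicity one, the claim follows, and nothing about $\operatorname{St}(\langle\Delta,\Delta'\rangle)$ needs checking.

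Second, you should cite the genericity of $\operatorname{St}(\langle\Delta\rangle)$ rather than derive it by iterating the level-$m$ derivative. Bernstein--Zelevinsky give $\operatorname{St}(\langle[\rho,\nu^{l-1}\rho]\rangle)^{(pm)}=\operatorname{St}(\langle[\nu^p\rho,\nu^{l-1}\rho]\rangle)$ for every $0\le p\le l$. Higher derivatives are not iterates of lower ones; for instance $(\chi_1\times\chi_2)^{(1)(1)}$ has length two while $(\chi_1\times\chi_2)^{(2)}=1$. The Jacquet-module description of derivatives is also circular at level $n$.

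With these fixes your route is a legitimate alternative. Note that it rests on the theorem that $\operatorname{St}$ preserves irreducibility (Aubert, Schneider--Stuhler). Zelevinsky did not have that theorem, but this paper takes it for granted.
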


\section{Induction for Hecke algebras and symmetric group combinatorics}
\subsection{Pieri rule and Kostka numbers}

We recall a classical result on the induction of sign representations from a Young subgroup, following \cite[Section 6.6.3]{GeckPfeiffer2000}. Let $S_n$ denote the symmetric group on $n$ letters and let $\rho_\lambda$ be the irreducible representation of $S_k$ corresponding to a partition $\lambda$ of $k$.

\begin{theorem}[Pieri rule for the sign representation]
Let $n = k + l$ and let $\lambda$ be a partition of $k$. Denote by $\operatorname{sgn}$ the sign representation of $S_l$. Then
\[
\operatorname{Ind}_{S_k \times S_l}^{S_n}(\rho_\lambda \otimes \operatorname{sgn}) = \bigoplus_{\nu} \rho_\nu,
\]
where the sum runs over all partitions $\nu$ of $n$ whose Young diagram can be built from that of $\lambda$ by adding $l$ boxes, with the restriction that no two added boxes lie in the same row.
\end{theorem}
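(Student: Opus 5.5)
The plan is to deduce the sign-Pieri rule from the classical Pieri rule for the trivial representation, by tensoring with the sign character. First, recall that Young's rule/Pieri rule for the trivial representation states
\[
\operatorname{Ind}_{S_k\times S_l}^{S_n}(\rho_\mu\otimes \mathbf{1})=\bigoplus_{\kappa}\rho_\kappa,
\]
where $\kappa$ runs over the partitions obtained from $\mu$ by adding $l$ boxes, no two in the same column (a horizontal strip). I would take this as the standard input, either quoted from \cite[Section 6.6]{GeckPfeiffer2000}, or proved by Frobenius reciprocity together with the branching rule $S_{m}\downarrow S_{m-1}$ and a character computation via symmetric functions ($s_\mu h_l=\sum s_\kappa$).

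Second, use the two basic facts $\rho_\lambda\otimes\operatorname{sgn}\cong\rho_{\lambda'}$ for any partition $\lambda$, and $\operatorname{Ind}_H^G(W)\otimes\chi\cong\operatorname{Ind}_H^G(W\otimes\chi|_H)$ for a one-dimensional character $\chi$ of $G$ (the projection formula). Since $\operatorname{sgn}_{S_n}$ restricts to $\operatorname{sgn}_{S_k}\otimes\operatorname{sgn}_{S_l}$ on the Young subgroup, we get
\[
\operatorname{Ind}_{S_k\times S_l}^{S_n}(\rho_\lambda\otimes\operatorname{sgn})
\cong\Bigl(\operatorname{Ind}_{S_k\times S_l}^{S_n}(\rho_{\lambda'}\otimes\mathbf 1)\Bigr)\otimes\operatorname{sgn}.
\]
Applying the trivial Pieri rule to $\mu=\lambda'$, and then tensoring with $\operatorname{sgn}$, yields $\bigoplus\rho_{\kappa'}$, with $\kappa$ ranging over horizontal strips added to $\lambda'$.

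Third, conjugate the combinatorics. Transposing Young diagrams interchanges rows and columns, so $\kappa\supset\lambda'$ with $\kappa/\lambda'$ a horizontal strip of size $l$ exactly when $\nu=\kappa'\supset\lambda$ with $\nu/\lambda$ a vertical strip of size $l$, i.e.\ no two added boxes lie in the same row. This gives the stated sum with multiplicity one.

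The only step that carries real content is the trivial Pieri rule itself. If it is not simply quoted, I would prove it via the Frobenius characteristic map, which turns induction from $S_k\times S_l$ into multiplication of symmetric functions. The identity $s_\mu h_l=\sum s_\kappa$ then follows from the Jacobi–Trudi or bialternant formula: multiply $a_{\mu+\delta}$ by $h_l$ and expand. Alternatively, one can apply the dual identity $s_\lambda e_l=\sum s_\nu$ directly, using $\operatorname{ch}(\operatorname{sgn}_l)=e_l$. I expect this bookkeeping to be the main obstacle; the remaining steps are formal.
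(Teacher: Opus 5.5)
Your argument is correct. It differs from the paper only in that the paper gives no proof at all: the statement is quoted as a classical fact from Geck--Pfeiffer \cite[Section 6.6.3]{GeckPfeiffer2000} and used only as input for the iterated formula that follows.

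Your route reduces the sign version to the ordinary horizontal-strip Pieri rule, so the only genuine input is that rule (or, equivalently, $s_\lambda e_l=\sum s_\nu$ used directly), as you say. The steps are:
\begin{enumerate}
\item twist by $\operatorname{sgn}_{S_n}$ using the projection formula;
\item use $\rho_\lambda\otimes\operatorname{sgn}\cong\rho_{\lambda'}$ and $\operatorname{sgn}_{S_n}|_{S_k\times S_l}=\operatorname{sgn}_{S_k}\otimes\operatorname{sgn}_{S_l}$;
\item transpose horizontal strips added to $\lambda'$ into vertical strips added to $\lambda$.
\end{enumerate}

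Beyond the citation, your approach buys something useful. The same twist applied to the whole Young subgroup gives $\operatorname{Ind}_H^{S_n}(\operatorname{sgn}_1\otimes\cdots\otimes\operatorname{sgn}_r)\cong\operatorname{Ind}_H^{S_n}(\mathbf 1)\otimes\operatorname{sgn}$. Young's rule then yields the paper's subsequent decomposition $\bigoplus_\tau K_{\tau',\lambda'}\rho_\tau$ at once, without iterating the Pieri rule, and it makes transparent why conjugate partitions appear in the Kostka indices.

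Keep the labelling convention in mind: $\rho_\lambda\otimes\operatorname{sgn}\cong\rho_{\lambda'}$ presupposes $\rho_{(n)}$ trivial and $\rho_{(1^n)}$ sign. This is the convention the paper uses, since it starts the iteration from the single column $(1^{n_1})$ for the sign of $S_{n_1}$.
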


Now take a composition $n = n_1 + n_2 + \cdots + n_r$ with $n_1 \ge n_2 \ge \cdots \ge n_r$, and let
\[
H = S_{n_1} \times S_{n_2} \times \cdots \times S_{n_r} \subseteq S_n
\]
be the associated Young subgroup. Define the partition
\[
\lambda = (n_1, n_2, \dots, n_r)',
\]
the conjugate of $(n_1,\dots,n_r)$. For each factor $S_{n_i}$ we take the sign representation $\operatorname{sgn}_i$.

Starting from the Young diagram of the partition $(1^{n_1})$ (a single column of $n_1$ boxes) and applying the Pieri rule iteratively for $i = 2, \dots, r$, we add $n_i$ boxes at the $i$-th step, never placing two new boxes in the same row. The result is the decomposition
\[
\operatorname{Ind}_H^{S_n}(\operatorname{sgn}_1 \otimes \operatorname{sgn}_2 \otimes \cdots \otimes \operatorname{sgn}_r) = \bigoplus_{\tau \vdash n} K_{\tau',\,\lambda'} \, \rho_\tau,
\]
where $\tau$ runs over partitions of $n$, $\lambda' = (n_1,\dots,n_r)$, and $K_{\tau',\,\lambda'}$ is the Kostka number: the number of semistandard Young tableaux of shape $\tau'$ and weight $\lambda'$. 

\subsection{Minimal length coset representatives}

Let $(W,S)$ be a finite Coxeter system with length function $\ell$, and let $W_J$ be a parabolic subgroup generated by $J\subseteq S$. The pair $(W_J,J)$ is itself a Coxeter system whose length function agrees with the restriction of $\ell$ to $W_J$.

Consider the following subset of $W$:
\[
Y_J := \{ x\in W \mid \ell(xs) > \ell(x) \ \text{for every } s\in J \}.
\]
These elements give a convenient parametrisation of the coset space $W/W_J$ (left cosets). The theorem below summarises their essential properties.

\begin{theorem}\label{thm:dist-reps}
With $Y_J$ as above:
\begin{enumerate}
\item[(a)] Every $w\in W$ factors uniquely as $w = x v$ with $x\in Y_J$ and $v\in W_J$, and moreover $\ell(w) = \ell(x) + \ell(v)$.
\item[(b)] For an element $x\in W$, the following three conditions are equivalent:
  \begin{itemize}
  \item[(i)] $x$ belongs to $Y_J$;
  \item[(ii)] adding any $v\in W_J$ on the right always adds length, i.e.\ $\ell(xv)=\ell(x)+\ell(v)$;
  \item[(iii)] $x$ is the unique element of shortest length inside the coset $x W_J$.
  \end{itemize}
  In particular, $Y_J$ picks out exactly one representative from each left coset of $W_J$.
\end{enumerate}
\end{theorem}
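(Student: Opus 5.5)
The plan is to prove (a) first, and then get (b) from it together with an exchange argument. Throughout I use the standard facts about the length function of a Coxeter system: $\ell(ws)=\ell(w)\pm 1$ for $s\in S$, and the exchange condition. I also use that $\ell|_{W_J}$ is the length function of $(W_J,J)$, as recalled before the statement.

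\textbf{Existence in (a).} Fix $w\in W$ and choose $x\in wW_J$ of minimal length. For every $s\in J$ we have $xs\in wW_J$, so minimality gives $\ell(xs)\ge\ell(x)$, hence $\ell(xs)>\ell(x)$ and $x\in Y_J$. Write $w=xv$ with $v\in W_J$.

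\textbf{Additivity of length.} The key step is to show that $\ell(xv)=\ell(x)+\ell(v)$ for every $x\in Y_J$ and every $v\in W_J$. This is (i)$\Rightarrow$(ii), and it also gives the length statement in (a). I argue by induction on $\ell(v)$. Write $v=v's$ with $s\in J$ and $\ell(v')=\ell(v)-1$. By induction, $\ell(xv')=\ell(x)+\ell(v')$. Suppose, for contradiction, that $\ell(xv's)<\ell(xv')$.

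Take a reduced expression of $xv'$ obtained by concatenating reduced words for $x$ and for $v'$. By the exchange condition, $xv's$ is obtained by deleting one letter from this word. The deleted letter cannot lie in the $v'$ part: that would give $\ell(v's)<\ell(v')$ inside $W_J$, contradicting $\ell(v)=\ell(v')+1$. So the deleted letter lies in the $x$ part, and we get $xv's=x_1v'$ with $\ell(x_1)=\ell(x)-1$.

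Now $x_1=x\,(v'sv'^{-1})$ lies in $xW_J$ and is strictly shorter than $x$. To turn this into a contradiction, I plan to prove the statement "$x\in Y_J$ implies $x$ is minimal in $xW_J$" simultaneously in the induction. The cleaner route is to show directly that (i) implies (iii) via a deletion-condition argument: if $xu$ with $u\in W_J$ is shorter than $x$, then some $s\in J$ satisfies $\ell(xs)<\ell(x)$. This is the step I expect to be the main obstacle. I would handle it by induction on $\ell(u)$, applying the exchange condition to $(xu)u^{-1}$ one letter at a time and using that each letter of $u^{-1}$ lies in $J$.

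\textbf{Uniqueness in (a).} Suppose $x,x'\in Y_J$ lie in the same coset, so $x'=xv$ with $v\in W_J$. Additivity gives $\ell(x')=\ell(x)+\ell(v)$. Symmetrically, $x=x'v^{-1}$ gives $\ell(x)=\ell(x')+\ell(v)$. Adding the two equalities forces $\ell(v)=0$, so $v=1$ and $x=x'$.

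\textbf{Part (b).} The implications close up as follows.
\begin{itemize}
\item (ii)$\Rightarrow$(i): take $v=s\in J$.
\item (i)$\Rightarrow$(ii): this is the additivity step above.
\item (ii)$\Rightarrow$(iii): every other element $xv$ of the coset, with $v\ne 1$, has length $\ell(x)+\ell(v)>\ell(x)$.
\item (iii)$\Rightarrow$(i): this is the minimality argument used for existence.
\end{itemize}
Uniqueness of the shortest element follows from the same length computation. Together with (a), this shows that $Y_J$ picks out exactly one representative from each left coset of $W_J$.
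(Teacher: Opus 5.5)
There is a genuine gap at exactly the step you flag, and it is the real content of the theorem. Your additivity induction for an arbitrary $x\in Y_J$ uses the exchange condition to produce $x_1=xt$ with $t=v'sv'^{-1}\in W_J$ and $\ell(x_1)<\ell(x)$. This contradicts $x\in Y_J$ only if you already know (i)$\Rightarrow$(iii). The induction on $\ell(u)$ you propose for that implication does not close. Write $y=xu$ and $x=y\,t_1\cdots t_k$, where $t_1\cdots t_k$ is a reduced word for $u^{-1}$ in $J$. Either $\ell(xt_k)<\ell(x)$, and you are done. Or $\ell(y\,t_1\cdots t_{k-1})=\ell(xt_k)=\ell(x)+1$, and then the inductive hypothesis only yields a descent in $J$ of $y\,t_1\cdots t_{k-1}$. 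But $t_k$ is already such a descent, so you learn nothing about $x$. Peeling off the first letter instead, $u=tu'$, fails the same way, since the hypothesis applied to $xt$ may simply return $t$. Your proofs of (i)$\Rightarrow$(ii), of uniqueness in (a), and the closing of the cycle in (b) all rest on this step, so the proposal as written does not prove the theorem.

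The paper gives no proof; it quotes the standard result (Geck--Pfeiffer, Section~2.1). The standard argument avoids your obstacle by reordering the steps, using your own exchange argument.

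\begin{itemize}
\item Run your induction for $x_0$ of minimal length in $wW_J$ rather than for an arbitrary element of $Y_J$. Then $x_1\in x_0W_J$ with $\ell(x_1)<\ell(x_0)$ is an immediate contradiction, so $\ell(x_0v)=\ell(x_0)+\ell(v)$ for all $v\in W_J$. This already gives existence, the length formula, and uniqueness of the shortest element of the coset.
\item Now (i)$\Rightarrow$(iii) is easy. Suppose $x\in Y_J\cap wW_J$ with $x\neq x_0$. Write $x=x_0v$ with $1\neq v\in W_J$, and $v=v''s$ with $s\in J$ and $\ell(v'')=\ell(v)-1$.
\item Then $\ell(xs)=\ell(x_0v'')=\ell(x_0)+\ell(v)-1=\ell(x)-1$, contradicting $x\in Y_J$. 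Hence $Y_J\cap wW_J=\{x_0\}$, and the rest of your argument goes through unchanged.
\end{itemize}

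Alternatively, you can close your original induction with the geometric representation. The reflection $t=v'sv'^{-1}$ has root $v'(\alpha_s)$, which is positive because $\ell(v's)>\ell(v')$. It lies in the span of $\{\alpha_{s'}: s'\in J\}$, so it is a nonnegative combination of these simple roots. Hence $x(\alpha_{s'})>0$ for all $s'\in J$ forces $x(v'(\alpha_s))>0$, that is, $\ell(xt)>\ell(x)$, contradicting $\ell(x_1)<\ell(x)$.
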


\begin{definition}
Elements of $Y_J$ are called \emph{distinguished left coset representatives} (or \emph{minimal length representatives}) of $W_J$ in $W$. The decomposition established above is written as
\[
W = Y_J \cdot W_J,
\]
indicating that $(x,v)\mapsto xv$ is a bijection from $Y_J\times W_J$ onto $W$ and that lengths add under this product.
\end{definition}

When such a representative is multiplied on the left by a simple generator, the result either stays in $Y_J$ or falls back into the same coset. This is made precise by the following lemma.

\begin{lemma}[Deodhar's lemma]
Let $J\subseteq S$, $x\in Y_J$ and $s\in S$. Then one of two possibilities occurs: either $sx\in Y_J$, or $sx = x u$ for some $u\in J$.
\end{lemma}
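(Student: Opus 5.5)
The plan is the standard one for Deodhar's lemma: compare the lengths of $sx$ and $x$, and use the characterisation of $Y_J$ in Theorem~\ref{thm:dist-reps}(b)(ii). The proof is a two-case check, first on whether $sx\in Y_J$, and then on how lengths behave.

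Assume $sx\notin Y_J$. By the definition of $Y_J$ there is some $t\in J$ with $\ell(sxt)<\ell(sx)$. Since $x\in Y_J$, part (b)(ii) gives $\ell(xt)=\ell(x)+1$. Because multiplying by a simple reflection on either side changes length by exactly $\pm1$, we have
\[
\ell(sx)\le \ell(x)+1 \quad\text{and}\quad \ell(sxt)\ge \ell(xt)-1=\ell(x).
\]
Combining these with $\ell(sxt)<\ell(sx)$ forces $\ell(sx)=\ell(x)+1$ and $\ell(sxt)=\ell(x)$.

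Now compare the two reduced-length computations. We have $\ell(s\cdot xt)=\ell(xt)-1$ while $\ell(s\cdot x)=\ell(x)+1$, so $s$ is a left descent of $xt$ but not of $x$. I will use the exchange condition on a reduced word for $xt$ obtained by appending $t$ to a reduced word $s_1\cdots s_k$ of $x$. Since $\ell(s\,xt)<\ell(xt)$, the exchange condition says that $s\,xt$ equals $xt$ with one letter deleted. If the deleted letter is some $s_i$ from the word for $x$, then $s\,xt=x't$ with $\ell(x')<\ell(x)$ and $sx=x'$. This contradicts $\ell(sx)=\ell(x)+1$. Hence the deleted letter is $t$ itself, so $sxt=x$, that is, $sx=xt$ with $t\in J$. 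This is the second alternative, with $u=t$.

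The main point to get right is this exchange-condition step, namely ruling out deletion inside the word for $x$. It uses only the standard exchange property of Coxeter systems. Alternatively, one can phrase it via the unique factorisation in Theorem~\ref{thm:dist-reps}(a). Write $sx=yv$ with $y\in Y_J$ and $v\in W_J$, so $\ell(sx)=\ell(y)+\ell(v)$. If $v\neq 1$, then $x=s\,yv$, and $\ell(x)\ge \ell(v)$, together with $\ell(y)\ge\ell(x)+1-\ell(v)$ and minimality of $x$ in its coset, yields $\ell(y)=\ell(x)$ and $\ell(v)=1$. Then $sy$ and $x$ lie in the same coset, which pins down $y=x$ and $v=u\in J$. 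I would present the exchange-condition version, as it is the shortest.
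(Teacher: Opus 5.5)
The paper gives no proof of this lemma; it quotes it as a standard Coxeter-group fact, following Geck--Pfeiffer. So there is nothing to compare against. Your exchange-condition argument is correct and complete, and it is the usual textbook proof.

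The length bookkeeping is right. The chain $\ell(x)=\ell(xt)-1\le \ell(sxt)<\ell(sx)\le \ell(x)+1$ forces $\ell(sx)=\ell(x)+1$. The word $s_1\cdots s_k t$ is then reduced. Deleting some $s_i$ would give $\ell(sx)\le k-1$, so the deleted letter must be $t$, and $sx=xt$. You do not actually need Theorem~\ref{thm:dist-reps}(b)(ii) to get $\ell(xt)=\ell(x)+1$; it is immediate from the definition of $Y_J$.

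Drop or repair your alternative sketch via Theorem~\ref{thm:dist-reps}(a). The inequalities $\ell(x)\ge\ell(v)$ and $\ell(y)\ge \ell(x)+1-\ell(v)$ are not justified as written.

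You can legitimately get $\ell(v)=1$ and $\ell(y)=\ell(x)$. From $x=syv$ with $x\in Y_J$ you get $sy=xv^{-1}$, hence $\ell(x)+\ell(v)=\ell(sy)\le \ell(y)+1$. Combine this with $\ell(y)+\ell(v)=\ell(sx)\le \ell(x)+1$.

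The final step still fails. The relations $sx=yu$ and $sy=xu$ are one and the same relation $y=sxu$, so knowing that $sy$ lies in $xW_J$ does not force $y=x$. Deducing $sxu=x$ from $\ell(sx)=\ell(xu)=\ell(x)+1$ and $\ell(sxu)=\ell(x)$ is exactly your exchange-condition step again, so this route is not independent.

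If you want a genuinely different short proof, use roots. This is also the form in which the paper later uses distinguished representatives, in the proofs of Lemmas 11.2 and 11.3. We have $x\in Y_J$ if and only if $x(\alpha_t)>0$ for all $t\in J$. Also, $s$ permutes the positive roots other than $\alpha_s$. Hence $sx\notin Y_J$ forces $x(\alpha_t)=\alpha_s$ for some $t\in J$, so $xtx^{-1}=s_{x(\alpha_t)}=s$ and $sx=xt$.
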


\subsection{Induced modules for Iwahori--Hecke algebras}

We keep the Coxeter system $(W,S)$ and let $\mathbf{H}_q$ be the associated Iwahori--Hecke algebra over $\mathbb{C}$ with parameter $q$ (a prime power). Throughout we assume $\mathbf{H}_q$ is semisimple, which is the relevant case for applications to $p$-adic groups. All modules are taken to be left modules.

Given $J\subseteq S$, the parabolic subalgebra generated by $\{T_s\mid s\in J\}$ is denoted $(\mathbf{H}_q)_J$; it is isomorphic to the Hecke algebra of $W_J$.

Parabolic induction for Hecke algebras is defined by the usual tensor product construction.

\begin{definition}[Induced module]
For a left $(\mathbf{H}_q)_J$-module $V$, the induced module is
\[
\operatorname{Ind}_J^S(V) := \mathbf{H}_q \otimes_{(\mathbf{H}_q)_J} V,
\]
where the left $\mathbf{H}_q$-module structure comes from left multiplication on the first factor: $h\cdot (h'\otimes v) = (hh')\otimes v$.
\end{definition}

Because $\mathbf{H}_q$ is free as a right $(\mathbf{H}_q)_J$-module with basis $\{T_x\mid x\in Y_J\}$, each element of $\operatorname{Ind}_J^S(V)$ can be expressed uniquely as a finite sum $\sum_{x\in Y_J} T_x \otimes v_x$ with $v_x\in V$. The action of a generator $T_s$ on such a basis vector is given by a case distinction that reflects the behaviour described in Deodhar's lemma.

\begin{lemma}[Action of $T_s$ on an induced module]\label{lem:Ts-action}
Take $x\in Y_J$, $v\in V$ and $s\in S$. Then $T_s\cdot (T_x\otimes v)$ equals:
\begin{itemize}
\item $T_{sx}\otimes v$, when $sx\in Y_J$ and $\ell(sx)>\ell(x)$;
\item $T_x \otimes (T_u\cdot v)$, when $sx = x u$ with $u\in J$ (this is the case where $sx$ stays in the same coset $x W_J$);
\item $q\, T_{sx}\otimes v + (q-1)\, T_x\otimes v$, when $sx\in Y_J$ and $\ell(sx)<\ell(x)$.
\end{itemize}
\end{lemma}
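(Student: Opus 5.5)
We have to prove Lemma~\ref{lem:Ts-action}. Everything takes place in $\mathbf{H}_q\otimes_{(\mathbf{H}_q)_J}V$. We use the standard multiplication rules in $\mathbf{H}_q$:
\[
T_sT_w=T_{sw}\quad\text{if }\ell(sw)>\ell(w),\qquad T_sT_w=qT_{sw}+(q-1)T_w\quad\text{if }\ell(sw)<\ell(w).
\]
We also use the multiplicativity $T_{xv}=T_xT_v$ whenever $\ell(xv)=\ell(x)+\ell(v)$, which holds for $x\in Y_J$ and $v\in W_J$ by Theorem~\ref{thm:dist-reps}(b). The plan is to compute $T_sT_x$ in $\mathbf{H}_q$ and then move the factors lying in $(\mathbf{H}_q)_J$ across the tensor sign. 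Deodhar's lemma splits the argument into the cases below.

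\textbf{Case $sx\in Y_J$ and $\ell(sx)>\ell(x)$.} The first multiplication rule gives $T_sT_x=T_{sx}$. Hence
\[
T_s\cdot(T_x\otimes v)=T_{sx}\otimes v,
\]
which is already in normal form because $sx\in Y_J$.

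\textbf{Case $sx\in Y_J$ and $\ell(sx)<\ell(x)$.} The second rule gives $T_sT_x=qT_{sx}+(q-1)T_x$. Both $sx$ and $x$ lie in $Y_J$, so the formula follows directly.

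\textbf{Case $sx=xu$ with $u\in J$ (the Deodhar alternative).} This is the main step. We first show that $\ell(sx)>\ell(x)$. By Theorem~\ref{thm:dist-reps}(b)(ii) we have $\ell(xu)=\ell(x)+\ell(u)=\ell(x)+1$. Since $sx=xu$, this gives $\ell(sx)=\ell(x)+1>\ell(x)$, so this case never produces the quadratic term.

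Next we compute. The first rule gives $T_sT_x=T_{sx}=T_{xu}$. By length additivity, $T_{xu}=T_xT_u$. Since $T_u\in(\mathbf{H}_q)_J$, it can be moved across the tensor:
\[
T_s\cdot(T_x\otimes v)=T_xT_u\otimes v=T_x\otimes(T_u\cdot v).
\]

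\textbf{Exhaustiveness.} Deodhar's lemma says that either $sx\in Y_J$ or $sx=xu$ with $u\in J$. In the first alternative $\ell(sx)=\ell(x)\pm1$, so it splits into the two cases treated first. The three cases therefore cover every possibility.

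The only point needing care is the length computation in the Deodhar case, namely that $xu$ has length $\ell(x)+1$. The argument above handles it via Theorem~\ref{thm:dist-reps}, and it uses the fact that $s\in S$ and $u\in J$ are simple reflections.
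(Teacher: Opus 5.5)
Your proof is correct and is essentially the standard argument behind these formulas, which the paper does not prove itself but takes from Geck--Pfeiffer (page 287). You compute $T_sT_x$ in $\mathbf{H}_q$, split cases via Deodhar's lemma, and use length additivity $T_{xu}=T_xT_u$ (including your check that $\ell(sx)=\ell(x)+1$ in the case $sx=xu$) to move $T_u$ across the tensor sign.
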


The formulas above are adapted to left modules from the standard ones (compare \cite[page~287]{GeckPfeiffer2000}).

The compatibility of induction with the specialisation $q=1$ (which sends $\mathbf{H}_q$ to the group algebra $\mathbb{C}[W]$) is established in Section~9.1.9 of \cite{GeckPfeiffer2000}.  
This yields the following fundamental result.

\begin{theorem}[Decomposition numbers for parabolic induction]
\label{thm:induction_decomposition}
For any parabolic subgroup $W_J\subseteq W$ and any irreducible representation $\psi$ of $(\mathbf{H}_q)_J$, the multiplicities of irreducible $\mathbf{H}_q$-modules in $\operatorname{Ind}_J^S(\psi)$ are exactly the same as the multiplicities of irreducible $W$-modules in $\operatorname{Ind}_{W_J}^W(\psi|_{W_J})$.
\end{theorem}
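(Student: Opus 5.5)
The plan is to use Tits' deformation argument, which is the content of \cite[Section 9.1.9]{GeckPfeiffer2000}, and to organize it around characters.

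Since $\mathbf H_q$ and $(\mathbf H_q)_J$ are semisimple, multiplicities of simple modules are read off from characters. So it suffices to compare the character of $\operatorname{Ind}_J^S(\psi)$ with that of $\operatorname{Ind}_{W_J}^W(\psi|_{W_J})$ under the specialization $q\mapsto 1$.

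\textbf{Step 1: work over the generic algebra.} Introduce an indeterminate $\mathbf u$ and the generic Hecke algebra $\mathbf H_{\mathbf u}$ over $A=\mathbb C[\mathbf u^{\pm1/2}]$, with fraction field $K$. Its split semisimple extension $K\mathbf H_{\mathbf u}$ has simple modules $\chi_{\mathbf u}$ indexed by $\operatorname{Irr}(W)$. By Tits' deformation theorem, the character values $\chi_{\mathbf u}(T_w)$ lie in $A$, and specializing $\mathbf u\mapsto q$ gives the simple $\mathbf H_q$-modules while $\mathbf u\mapsto 1$ gives the simple $\mathbb C[W]$-modules, each bijectively. The same holds for the parabolic subalgebra $(\mathbf H_{\mathbf u})_J$. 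The simple $\psi$ of $(\mathbf H_q)_J$ and the $W_J$-module $\psi|_{W_J}$ are then the two specializations of one generic module $\psi_{\mathbf u}$. Here $\psi|_{W_J}$ is understood via this Tits bijection.

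\textbf{Step 2: induce generically and specialize.} Form $\operatorname{Ind}_J^S(\psi_{\mathbf u})=\mathbf H_{\mathbf u}\otimes_{(\mathbf H_{\mathbf u})_J}\psi_{\mathbf u}$. Because $\mathbf H_{\mathbf u}$ is free over $(\mathbf H_{\mathbf u})_J$ with basis $\{T_x\mid x\in Y_J\}$ (Theorem~\ref{thm:dist-reps}), this module is free over $A$. It is given by explicit matrices: by Lemma~\ref{lem:Ts-action}, the matrix of $T_s$ has entries polynomial in $\mathbf u$ and in the entries of $\psi_{\mathbf u}(T_u)$.

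Induction commutes with specialization. At $\mathbf u=q$ these matrices give $\operatorname{Ind}_J^S(\psi)$. At $\mathbf u=1$ they give exactly the left $\mathbb C[W]$-module $\mathbb C[W]\otimes_{\mathbb C[W_J]}\psi|_{W_J}$, since the three cases of Lemma~\ref{lem:Ts-action} degenerate to the permutation action on cosets $xW_J$.

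\textbf{Step 3: compare multiplicities.} Over $K$, write $\operatorname{Ind}_J^S(\psi_{\mathbf u})=\sum_\chi m_\chi\chi_{\mathbf u}$. Taking traces of each $T_w$ gives an identity of functions in $A$, and this identity survives both specializations. Since the specialized characters are linearly independent (semisimplicity at $q$ and at $1$), the integer $m_\chi$ is the multiplicity on both sides, which proves the statement.

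\textbf{Main obstacle.} The delicate step is Step~1. One must justify that character values are integral over $A$, and not merely in $K$. One must also check that the specialization at $q$ of a split generic simple module is simple, so that the bijections are well defined. I would try to handle both by quoting Tits' deformation theorem directly \cite[Section 9.1.9]{GeckPfeiffer2000}. If needed, I would supplement it with the explicit semisimplicity of $\mathcal H(S_n,q)$, since the paper only applies the statement to type $A$.
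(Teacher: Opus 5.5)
Your proposal is correct and follows essentially the paper's route. The paper gives no independent proof: it simply invokes the compatibility of Hecke-algebra induction with the specialization $q\mapsto 1$ from Geck--Pfeiffer, Section 9.1.9, and your generic-algebra and Tits-deformation argument is exactly what underlies that citation. The one point to make explicit is that $\psi_{\mathbf u}$ must be realized on an $(\mathbf H_{\mathbf u})_J$-stable $A$-lattice, which exists because $A=\mathbb C[\mathbf u^{\pm1/2}]$ is a principal ideal domain; then the induced module is $A$-free, and by semisimplicity its specializations at $\mathbf u=q$ and $\mathbf u=1$ are isomorphic to $\operatorname{Ind}_J^S(\psi)$ and $\operatorname{Ind}_{W_J}^W(\psi|_{W_J})$.
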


Thus, in the semisimple case (which holds for the Hecke algebras arising from $p$-adic groups), the parabolic induction rules for $\mathbf{H}_q$ are identical to those for the finite Coxeter group $W$.

\section{Depth 0 decomposition of one segment}

In this section, we still assume that $(\tau,W)$ is an irreducible depth $0$ supercuspidal representation of $G_r$ with type $(K_r,\tau_0)$, and that $\tau_0 \otimes \tau_0 \cdots \otimes \tau_0 = \rho_M$ is a representation of $(G_r^F)^n$. Of course, via inflation, we also view it as a representation of $J_M$; the notation is the same as in Section~5. Suppose that $\Delta = [\tau,\tau\nu^{n-1}]$ is a single segment. By standard Zelevinsky theory, the parabolic induction $\tau \times \tau\nu \times \cdots \times \tau\nu^{n-1}$ has a unique irreducible submodule $\langle\Delta\rangle$ and a unique irreducible quotient $\operatorname{St}(\langle\Delta\rangle)$. By the beginning of Section~7, all irreducible components of $\langle\Delta\rangle^{K_{nr+}}$ and $\operatorname{St}(\langle\Delta\rangle)^{K_{nr+}}$ lie in the Harish-Chandra series $\operatorname{Ind}_{P^F}^{G_{nr}^F}(\rho_M)$ (the notation is the same as in Section~7). By Section~3, all irreducible representations of this Harish-Chandra series can be parameterized by simple modules of the Iwahori--Hecke algebra $\mathcal{H}(S_n,q^r)$. For an irreducible representation $\lambda$ (which we may also regard as a partition), we will use $V_{\lambda}$ to denote the corresponding irreducible representation lying in that Harish-Chandra series (we will omit the superscript $(\rho_M)$ used in Section~3, as this will not cause confusion).

We begin with a technical lemma from \cite[Proposition 7.2.13]{DigneMichel2020}.
\begin{lemma}
Let $\operatorname{D}$ denote Curtis duality. Then $\operatorname{D}(V_{\lambda}) = V_{\lambda \otimes \operatorname{sgn}}$.
\end{lemma}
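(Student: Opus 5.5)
The plan is to compute Alvis--Curtis duality in Grothendieck-group terms and transport it across the Howlett--Lehrer correspondence $\Theta$ of Section~3. By definition $\operatorname{D}=\pm D_0$ with
\[
D_0=\sum_{J\subseteq S}(-1)^{|J|}\, i_{L_J}^{G^F_{nr}} r_{L_J}^{G^F_{nr}}.
\]
Harish-Chandra induction and restriction preserve Harish-Chandra series. Hence $D_0$ preserves the subgroup of the Grothendieck group spanned by the constituents of $\operatorname{Ind}_{P^F}^{G^F_{nr}}(\rho_M)$. I would therefore compute $D_0$ on that span through the $\mathcal H(S_n,q^r)$-modules $\Theta(V_\lambda)$.

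The first step is to relate the Harish-Chandra restriction and induction to the standard Levi subgroups $L_J\supseteq M^F$ with the restriction and induction of Hecke modules between $\mathcal H(S_n,q^r)$ and its parabolic subalgebras $\mathcal H(W_{J'},q^r)$. Here $W_{J'}$ is the Young subgroup of $S_n$ attached to $L_J$. This is the Howlett--Lehrer comparison theorem: $\Theta$ intertwines $i_{L}^{G}$ with $\mathbf H\otimes_{\mathbf H_{J'}}-$, and intertwines $r^G_L$, restricted to the series, with restriction of modules. Only those $L_J$ containing a conjugate of $M^F$ contribute. For $GL$, these are the $L_J$ whose block sizes are all multiples of $r$. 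The remaining terms vanish on our series because $r_{L_J}$ kills it, since cuspidal supports are conjugate to $M^F$.

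For the contributing $J$, the relevant Levi subgroups $L_J$ correspond to subsets $J'$ of the simple reflections of $S_n$. The sign $(-1)^{|J|}$ differs from $(-1)^{|J'|}$ only by a global sign, because $|S|-|J|=|S_n\text{-simple}|-|J'|$ measures the same semisimple-rank drop. So on the series $D_0$ corresponds, up to a global sign, to
\[
\sum_{J'}(-1)^{|J'|}\operatorname{Ind}_{J'}\operatorname{Res}_{J'}.
\]
By Theorem~\ref{thm:induction_decomposition} and the analogous statement for restriction (Tits deformation in the semisimple case), this operator has the same effect on characters as the Solomon alternating sum for $S_n$. The classical identity
\[
\sum_{J'}(-1)^{|J'|}\operatorname{Ind}_{W_{J'}}^{W}\operatorname{Res}_{W_{J'}}^{W}\chi=\pm\,\chi\otimes\operatorname{sgn}
\]
then yields $D_0(V_\lambda)=\pm V_{\lambda\otimes\operatorname{sgn}}$. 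The sign is removed by positivity, as in the proof of Theorem~4.5.

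The main obstacle is the first step. One has to verify that Harish-Chandra induction and restriction are compatible with the Hecke-module parabolic functors under $\Theta$, including the normalizations of the $T_w$ fixed in Section~3. If Howlett--Lehrer's comparison theorem is not directly usable, I would fall back on \cite[Proposition 7.2.13]{DigneMichel2020}, which is exactly this statement in Digne--Michel's setting. I would then check that their labelling of $V_\lambda$ agrees with the one defined via $\Theta$ in Section~3. In both cases the labelling comes from the same endomorphism algebra and the same specialization to $S_n$, so they should coincide.
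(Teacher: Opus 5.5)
Your argument is correct, but it does more than the paper does. The paper gives no argument for this lemma: it simply quotes \cite[Proposition 7.2.13]{DigneMichel2020}, which you keep only as a fallback.

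Your main route reconstructs the standard proof for this particular series:
\begin{enumerate}
\item Cuspidality kills every $r_{L_J}$ with $L_J$ not containing a conjugate of $M^F$.
\item The surviving $J$ are indexed by subsets $J'$ of the simple reflections of $S_n$, with the same rank drop, so the signs agree up to one global sign.
\item The Howlett--Lehrer comparison turns $D_0$ into $\pm\sum_{J'}(-1)^{|J'|}\operatorname{Ind}_{J'}\operatorname{Res}_{J'}$.
\item Tits deformation, Solomon's identity $\sum_{J'}(-1)^{|J'|}\operatorname{Ind}_{W_{J'}}^{S_n}1=\operatorname{sgn}$ and the projection formula then give $\chi\otimes\operatorname{sgn}$.
\end{enumerate}

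The citation buys brevity and generality, since it covers any Harish-Chandra series. The cost is that it leaves implicit that Digne--Michel's parametrization matches the labelling $V_\lambda$ defined through $\Theta$ and the normalized $T_w$ of Section~3. Your route makes exactly that check, and it can be made internal to the paper:
\begin{itemize}
\item Compatibility of $\Theta$ with induction is the paper's own Lemma~\ref{lem:theta-induction}. Its proof does not use the present lemma, so there is no circularity.
\item Compatibility with restriction is formal. By transitivity of induction and Frobenius reciprocity, $\Theta(V)=\operatorname{Hom}_{L_J}(\operatorname{Ind}_{P^F\cap L_J}^{L_J}\rho_M,\,r_{L_J}V)$. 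One then notes that $i_{L_J}^{G}$ sends the $L_J$-basis element $T_w$ to the $G$-basis element $T_w$ for $w$ in the Young subgroup. This holds because lengths agree, and because the unipotent radical $U_Q$ of $Q$ is normalized by $L_J$, which gives $e_{U^F}we_{U^F}=e_{U^F\cap L_J}\,w\,e_{U^F\cap L_J}\,e_{U_Q}$.
\item By semisimplicity and adjunction, either compatibility implies the other on Grothendieck groups.
\end{itemize}
So the step you flagged as the main obstacle is lighter than you expected.
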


Chan--Savin also prove the following result (\cite[Theorem 3.3]{ChanSavin2019}):

\begin{theorem}
\label{thm:BK-type}
Let $\mathfrak{s}$ be the inertial class of $G_{nr}$ associated to the cuspidal datum $[M, \tau \otimes \cdots \otimes \tau]$ ($n$ times), where $M \cong (G_r)^n$ and $\tau$ is an irreducible depth-zero supercuspidal representation of $G_r$. Let $(J,\rho)$ be the $\mathfrak{s}$-type constructed in Section~5. Then there is an isomorphism of Hecke algebras
$$
\mathcal{H}(G_{nr},\rho) \cong \mathcal{H}_{\mathrm{aff}}(S_n, q^r),
$$
where $q$ is the order of the residue field of $F$. Moreover, we have
$$
(\operatorname{St}(\langle\Delta\rangle))_{\rho} \cong \mathrm{St}_n,
$$
where $\mathrm{St}_n$ is the Steinberg module of $\mathcal{H}_{\mathrm{aff}}(S_n, q^r)$.
\end{theorem}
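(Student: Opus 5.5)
The statement is Theorem~\ref{thm:BK-type}. It has two parts: the Hecke algebra isomorphism, and the identification of $(\operatorname{St}(\langle\Delta\rangle))_\rho$ with the Steinberg module. The first part is already available as the main theorem of \cite{BushnellKutzko1993}, quoted at the end of Section~5, once we know from Proposition~5.7 that $(J,\rho)$ is a type for $\mathfrak{s}$. Under that isomorphism the finite subalgebra $H_{S_n}$ is supported on $J\,S_n\,J$, and the commutative part comes from the centre of $\mathcal{H}(M,\rho_M)$, i.e.\ from the lattice of translations $\operatorname{diag}(\varpi^{a_1}I_r,\dots,\varpi^{a_n}I_r)$. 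So the real content is the second part.

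The plan for the second part has four steps.

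\textbf{Step 1: dimension.} Since $\rho$ is trivial on $K_{nr+}$, Section~7 gives $V_\rho=\operatorname{Hom}_{P^F}(\rho_M,V^{K_{nr+}})$. By Theorem~4.3 and Proposition~6.3, applied to $V=\tau\times\cdots\times\tau\nu^{n-1}$, we get $V^{K_{nr+}}\cong\operatorname{Ind}_{P^F}^{G_{nr}^F}(\rho_M)$. Hence $\dim V_\rho=|S_n|=n!$. This matches $\dim\operatorname{Ind}_{H_{S_n}}^{\mathcal{H}_{\mathrm{aff}}}(\chi)$ for a character $\chi$ of the commutative part, as it must, because the functor $V\mapsto V_\rho$ is an equivalence compatible with parabolic induction in the sense of Bushnell--Kutzko covers.

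\textbf{Step 2: the $H_{S_n}$-action on $\operatorname{St}(\langle\Delta\rangle)_\rho$.} Zelevinsky's Jacquet module formula (Proposition~3.4 of \cite{Zelevinsky1980}, dual version) shows that $r_M(\operatorname{St}(\langle\Delta\rangle))$ is one-dimensional over $\rho_M$, namely the single exponent $\tau\nu^{n-1}\otimes\cdots\otimes\tau$. By Theorem~4.4 and the cover property, $\operatorname{St}(\langle\Delta\rangle)_\rho\cong (r_M\operatorname{St}(\langle\Delta\rangle))_{\rho_M}$, and this space is one-dimensional by Lemma~6.1. So $\operatorname{St}(\langle\Delta\rangle)_\rho$ is a one-dimensional $\mathcal{H}_{\mathrm{aff}}$-module. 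The $H_{S_n}$-part is therefore the trivial or the sign character, meaning each $[s]$ acts by $q^r$ or by $-1$.

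To decide which, I will use Proposition~7.1 to replace $[s]$ by $T_s$. Then I will compute on the finite side: $\operatorname{St}(\langle\Delta\rangle)^{K_{nr+}}$ is a one-dimensional-multiplicity constituent of $\operatorname{Ind}(\rho_M)$. By Theorem~4.5 it equals $D(\langle\Delta\rangle^{K_{nr+}})$. For $\langle\Delta\rangle$ the analogous argument applies, and it contains the vector coming from the unique $K$-spherical-type line; equivalently it corresponds to the index (trivial) character, on which $T_s$ acts by $q^r$ (via the formula $T_s=q^{r(r+1)/2}e_{U^F}se_{U^F}\otimes\gamma_s$, computing on the $G^F$-invariant line of $\operatorname{Hom}$). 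Applying the Curtis duality lemma, $D(V_\lambda)=V_{\lambda\otimes\operatorname{sgn}}$, then gives the sign character, $T_s\mapsto -1$, for $\operatorname{St}(\langle\Delta\rangle)$.

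\textbf{Step 3: the lattice part.} The translations act on the one-dimensional space by the character dictated by the exponent $\tau\nu^{n-1}\otimes\cdots\otimes\tau$, computed via the Jacquet module isomorphism above. Matching this with the normalization of the Bernstein generators shows that the resulting module is precisely the Steinberg module $\mathrm{St}_n$, i.e.\ the unique one-dimensional module with sign character on $H_{S_n}$ and the corresponding central character.

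\textbf{Step 4: the main obstacle.} I expect the hardest step to be the normalization in Step~3. The translation generators in \cite{BushnellKutzko1993} carry a $\delta_P^{1/2}$-type twist, and the identification of the exponent requires checking that the Bushnell--Kutzko isomorphism of $\mathcal{H}(M,\rho_M)$ with Laurent polynomials matches the unramified twist $\nu^{i}$ with the correct sign of the exponent. I would handle this first in the case $n=2$, comparing the two one-dimensional modules ($\langle\Delta\rangle$ and $\operatorname{St}(\langle\Delta\rangle)$) and using Theorem~4.5 to pin the convention. The general case then follows because $\mathrm{St}_n$ is characterized by its restrictions to the rank-one parabolic subalgebras.
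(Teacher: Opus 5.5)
Your first part (quoting Bushnell--Kutzko once $(J,\rho)$ is known to be a type) is fine. So is the one-dimensionality argument in Step~2. Both $\operatorname{St}(\langle\Delta\rangle)_\rho$ and $\langle\Delta\rangle_\rho$ are one-dimensional, $H_0$ acts on each by either the index character $[s]\mapsto q^r$ or the sign character $[s]\mapsto -1$, and Theorem~4.5 with Lemma~10.1 shows the two characters are opposite.

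The gap is the step that decides which is which. You assert that $\langle\Delta\rangle^{K_{nr+}}$ contains a ``$K$-spherical-type line'', and you compute $T_s$ on a ``$G^F$-invariant line''. That only makes sense for $r=1$, where $\langle\Delta\rangle$ is a character of $\operatorname{GL}_n(F)$. For $r>1$, $\langle\Delta\rangle$ has no $K_{nr}$-fixed vector, and the trivial representation of $G_{nr}^F$ does not lie in the Harish--Chandra series of $\rho_M=\tau_0^{\otimes n}$. Everything you actually use (one-dimensionality, Theorem~4.5, Lemma~10.1, Proposition~7.1) is symmetric under exchanging $\langle\Delta\rangle$ and $\operatorname{St}(\langle\Delta\rangle)$. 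Moreover, the paper runs this logic in the opposite direction: Proposition~10.3 obtains $\langle\Delta\rangle^{K_{nr+}}\cong V_{\mathrm{Triv}}$ from the present theorem plus duality, so invoking that identification here is circular. Step~4 has the same defect: Theorem~4.5 at $n=2$ only says the two one-dimensional modules are swapped, so it cannot pin down a convention.

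The paper gives no argument of its own here. It imports the statement from \cite[Theorem~3.3]{ChanSavin2019}, and that citation is what supplies the asymmetric input. To make your approach self-contained, reverse Steps~2 and~3.

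First, determine the eigenvalues of the translation elements on $\operatorname{St}(\langle\Delta\rangle)_\rho$ from the exponent $\tau\nu^{n-1}\otimes\cdots\otimes\tau$ of its Jacquet module along $M$. For this, use the $\mathcal{H}(M,\rho_M)$-equivariance of $V_\rho\cong (V_U)_{\rho_M}$, with the correct $t_P$-normalization and the $\delta_P^{1/2}$ twist from Bushnell--Kutzko.

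Then apply the Bernstein relation to this one-dimensional module. If $t$ is the eigenvalue of $[s]$ and $c$ that of the coroot translation $\theta_{-\alpha^\vee}$, it gives $t=(q^r-1)/(1-c)$ in one standard convention. Hence $t=q^r$ forces $c=q^{-r}$, and $t=-1$ forces $c=q^{r}$.

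So the translation eigenvalue, read off from the Jacquet exponent, decides the $H_0$-character. This is the genuine content of the second assertion. The normalization work you flag in Step~4 has to be carried out here, not deferred to duality. Only afterwards does Proposition~7.1 transfer the answer to $T_s$ on the finite side.
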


In particular, when we restrict this Steinberg module $\mathrm{St}_n$ to the subalgebra $H_0$, it is simply the sign representation of the finite Hecke algebra $H_0$.

\begin{proposition}
$\langle\Delta\rangle^{K_{nr+}} \cong V_{\mathrm{Triv}}$, and $\operatorname{St}(\langle\Delta\rangle)^{K_{nr+}} \cong V_{\mathrm{Sgn}}$ as representations of the finite Lie group $K_{nr}/K_{nr+}$.
\end{proposition}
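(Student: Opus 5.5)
We start with the Steinberg side, since Theorem~\ref{thm:BK-type} describes it directly. Put $V=\operatorname{St}(\langle\Delta\rangle)$. At the start of Section~7 we saw that every irreducible constituent of $V^{K_{nr+}}$ lies in the Harish--Chandra series $\operatorname{Ind}_{P^F}^{G_{nr}^F}(\rho_M)$. We also have
\[
V_\rho=\operatorname{Hom}_{P^F}(\rho_M,V^{K_{nr+}})\cong \operatorname{Hom}_{G_{nr}^F}\bigl(\operatorname{Ind}_{P^F}^{G_{nr}^F}(\rho_M),V^{K_{nr+}}\bigr).
\]
Since $\rho_M$ is cuspidal, the isomorphism above follows from Frobenius reciprocity together with $e_{U^F}$-invariance. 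By Theorem~3.1 and semisimplicity, $V^{K_{nr+}}\cong\bigoplus_\lambda m_\lambda V_\lambda$ for some multiplicities $m_\lambda$. Applying $\Theta$ then gives $V_\rho\cong\bigoplus_\lambda m_\lambda\,\Theta(V_\lambda)$ as $H_1$-modules, where $\Theta(V_\lambda)$ is the simple $H_1$-module labelled by $\lambda$. By Proposition~7.1, the $H_1$-module structure agrees with the $H_0$-module structure. By Theorem~\ref{thm:BK-type}, $V_\rho$ restricted to $H_0$ is the sign module, which is one-dimensional. Therefore exactly one $m_\lambda$ is nonzero, it equals $1$, and its $\lambda$ is the label of the sign module. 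This gives $\operatorname{St}(\langle\Delta\rangle)^{K_{nr+}}\cong V_{\mathrm{Sgn}}$.

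For $\langle\Delta\rangle$ we use Theorem~4.5. Zelevinsky's involution coincides with Aubert duality up to the normalization of signs, so $\operatorname{St}(\operatorname{St}(\langle\Delta\rangle))=\langle\Delta\rangle$. Theorem~4.5 then gives
\[
\langle\Delta\rangle^{K_{nr+}}=\operatorname{D}\bigl(\operatorname{St}(\langle\Delta\rangle)^{K_{nr+}}\bigr)=\operatorname{D}(V_{\mathrm{Sgn}}).
\]
By Lemma~9.1, $\operatorname{D}(V_{\mathrm{Sgn}})=V_{\mathrm{Sgn}\otimes\operatorname{sgn}}=V_{\mathrm{Triv}}$. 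The identity of Theorem~4.5 holds in the Grothendieck group, and $K_{nr}/K_{nr+}$ is finite, so it is an isomorphism of genuine representations.

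The step we expect to be the main obstacle is matching labels. The paper parametrizes the series by irreducible $S_n$-representations through the Hecke algebra $\mathcal H(S_n,q^r)$, and we must check that the one-dimensional module on which $T_s$ acts by $-1$ (the sign module, as opposed to $T_s\mapsto q^r$) corresponds to $\mathrm{Sgn}$ under the convention used in Lemma~9.1. This is the usual specialization $q^r\to1$ convention, and it is consistent with the remark after Theorem~\ref{thm:BK-type}. We also need to confirm that Chan--Savin's isomorphism of Hecke algebras restricts on the finite part to the algebra $H_0$ used in Proposition~7.1. If the identification of Aubert duality with Zelevinsky's $\operatorname{St}$ raises any doubt, there is an alternative route for $\langle\Delta\rangle$. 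One computes $\langle\Delta\rangle_\rho$ directly as the trivial $H_0$-module, using the Iwahori--Matsumoto-type involution of $\mathcal H_{\mathrm{aff}}$, which exchanges the Steinberg and trivial modules and is compatible with Aubert duality on types. The same counting argument then gives $V_{\mathrm{Triv}}$.
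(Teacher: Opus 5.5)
Your proposal is correct and follows the same approach as the paper. The Steinberg case comes from Proposition~7.1 together with the Chan--Savin result (Theorem~10.2) that $\operatorname{St}(\langle\Delta\rangle)_\rho$ restricts to the sign module of $H_0$; the $\langle\Delta\rangle$ case then follows from Theorem~4.5 and the Curtis-duality lemma $\operatorname{D}(V_\lambda)=V_{\lambda\otimes\operatorname{sgn}}$, which is Lemma~10.1 in the paper rather than 9.1, and your dimension count on the one-dimensional Hecke module spells out a step the paper leaves implicit.
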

\begin{proof}
By Proposition~7.1 and Theorem~10.2, the second equation is correct. By Lemma~10.1 and Theorem~4.5 we obtain the first equation.
\end{proof}

\section{Some technical lemmas}

Let $M^F = (G_r^F)^n$ be a Levi subgroup of $G_{nr}^F$, and let $P^F = M^F N^F$ be a standard parabolic subgroup with Levi factor $M^F$. We identify the symmetric group $S_{nr}$ with the Weyl group of $G_{nr}^F$, and we denote by $S$ the set of simple reflections of $S_{nr}$. Let $\Phi$ be the root system of type $A_{nr-1}$ with simple roots $\Delta$ corresponding to the upper‑triangular matrices. For a root $\alpha\in\Phi$, write $U_\alpha$ for the associated root subgroup.

Let $Q^F = L^F U^F$ be a standard parabolic subgroup of $G_{nr}^F$ containing $P^F$, and let $J\subseteq S$ be the subset corresponding to $Q^F$. Suppose that $L^F=L_1^F \times L_2^F \times \cdots \times L_t^F$, with $L_j^F$ of the form $G_{n_jr}^F$. Suppose $\tau=\tau_0 \otimes \tau_0 \otimes \cdots \otimes \tau_0$ is an irreducible cuspidal representation of $M^F$. And $\rho=\rho_1 \otimes \rho_2 \otimes \cdots \otimes \rho_t$ is an irreducible representation of $L^F$ such that $\rho_i$ lies in the Harish-Chandra series induced from $\tau_0 \otimes \tau_0 \otimes \cdots \otimes \tau_0$ ($n_i$ times). It is well known that the double coset space $P^F \backslash G_{nr}^F /Q^F$ corresponds to the double coset space $(S_r \times S_r \times \cdots \times S_r)  w (S_{n_1r} \times S_{n_2r} \times \cdots \times S_{n_tr})$.

\begin{lemma}
Let $w \in S_{nr}$ be such that $M^F \subseteq M^F \cap wQ^Fw^{-1}$. Then for integers $i,j$ with
\[
\bigl\lfloor (i-1)/r \bigr\rfloor = \bigl\lfloor (j-1)/r \bigr\rfloor,
\]
where $\lfloor\cdot\rfloor$ denotes the floor function,  the conjugation by $w^{-1}$ sends the root groups corresponding to $e_i-e_{i+1}$ and $e_j-e_{j+1}$ to the same block of $L^F$, and there exists an element $w_1$ in the double coset
\[
(S_r \times S_r \times \cdots \times S_r)w (S_{n_1r} \times S_{n_2r} \times \cdots \times S_{n_tr}),
\]
such that for every $0 \le a \le n-1$ there is an integer $k$ with
\[
w_1(ar + j) = kr + j \qquad \text{for every } 1 \le j \le r.
\]
\end{lemma}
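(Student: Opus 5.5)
The plan is to convert the hypothesis into a statement about roots and then to adjust $w$ inside its double coset. I read the hypothesis as $M^F\subseteq wQ^Fw^{-1}$, i.e.\ $w^{-1}M^Fw\subseteq Q^F$; the stated form $M^F\subseteq M^F\cap wQ^Fw^{-1}$ is equivalent to this. The proof splits into a root-theoretic step (the first assertion) and a combinatorial re-ordering step (the existence of $w_1$).

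\emph{Step 1: roots of $M^F$ land in roots of $L^F$.} The torus $T^F$ is normalized by $w$. The group $w^{-1}M^Fw$ is generated by $T^F$ and the root groups $U_{w^{-1}\alpha}$ for $\alpha$ a root of $M^F$. Such a $U_{w^{-1}\alpha}$ lies in $Q^F$ exactly when $w^{-1}\alpha$ is a root of $Q^F$, that is, a root of $L^F$ or of $U^F$. The roots of $M^F$ come in pairs $\pm\alpha$. If both $w^{-1}\alpha$ and $-w^{-1}\alpha$ are roots of $Q^F$, then neither can lie in $U^F$, because $U^F$ contains no pair of opposite roots. Hence $w^{-1}\alpha$ is a root of $L^F$.

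\emph{Step 2: each $r$-block of $M^F$ lands in one block of $L^F$.} Write the roots as $e_p-e_q$. A root $e_p-e_q$ belongs to $L^F$ iff $p$ and $q$ lie in the same $L$-block $\{(n_1+\dots+n_{k-1})r+1,\dots,(n_1+\dots+n_k)r\}$. Take the $a$-th $M$-block $\{ar+1,\dots,ar+r\}$. By Step 1, for any two indices $i,j$ in it, $w^{-1}(i)$ and $w^{-1}(j)$ lie in the same $L$-block. This is exactly the first assertion: the simple roots $e_i-e_{i+1}$ and $e_j-e_{j+1}$ with $\lfloor (i-1)/r\rfloor=\lfloor (j-1)/r\rfloor$ are sent into the same block of $L^F$. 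A cardinality count then shows that the $L$-block of size $n_kr$ receives exactly $n_k$ full $M$-blocks.

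\emph{Step 3: construct $w_1$ by reordering.} Choose $v\in S_{n_1r}\times\cdots\times S_{n_tr}$ that, inside each $L$-block, rearranges the images $w^{-1}(\{ar+1,\dots,ar+r\})$ of the $n_k$ $M$-blocks it receives. The image of the $a$-th $M$-block should become a consecutive run $\{kr+1,\dots,kr+r\}$, with the internal order of $j=1,\dots,r$ preserved. This is possible because an arbitrary permutation of each $L$-block is available. Then $w_1:=wv^{-1}$ lies in the same double coset (right multiplication by the Weyl group of $L^F$). It satisfies $w_1^{-1}(ar+j)=kr+j$ for all $j$, so $w_1^{-1}$ permutes the $r$-blocks without disturbing their interiors. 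Consequently $w_1$ itself also sends $ar+j$ to $k'r+j$ for a suitable $k'$, which is the stated form. If the convention requires it, the left factor $S_r\times\cdots\times S_r$ can additionally correct the order inside blocks.

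The main obstacle is not conceptual but bookkeeping. One must keep track of whether $w$ or $w^{-1}$ acts on indices, and match the convention for the double coset $(S_r)^n\,w\,\prod_k S_{n_kr}$ with the condition $M^F\subseteq wQ^Fw^{-1}$. I would fix these conventions once, via the Bruhat decomposition $P^F\backslash G^F_{nr}/Q^F$, before running Steps 1--3.
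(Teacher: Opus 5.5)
Your proposal is correct and follows the paper's proof. The opposite-roots observation sends every root of $M^F$ under $w^{-1}$ into the roots of $L^F$. Your direct use of all roots $e_p-e_q$ inside an $r$-block is a slightly cleaner substitute for the paper's induction through $e_i-e_{i+2}$. Your explicit choice $w_1=wv^{-1}$ with $v\in S_{n_1r}\times\cdots\times S_{n_tr}$ supplies the step the paper dismisses as following easily.

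As in the paper, read the first assertion as a statement about indices, or equivalently restrict it to $i,j\not\equiv 0 \pmod r$. For $i=ar+r$ the root $e_i-e_{i+1}$ is not a root of $M^F$, and its image under $w^{-1}$ need not lie in $L^F$ at all.
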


\begin{proof}
The assumption $M^F \subseteq M^F \cap wQ^Fw^{-1}$ implies that for every root subgroup $U_\alpha \subset M^F$ we have
\[
w^{-1}U_\alpha w = U_{w^{-1}(\alpha)} \subset Q^F.
\]
Since $U_{-\alpha} \subset M^F$, we also obtain $U_{-w^{-1}(\alpha)} \subset Q^F$, hence
\[
U_{w^{-1}(\alpha)} \subset L^F.
\]

For the first statement, by induction we only need to consider the case
$i = kr + a$ with $1 \le a \le r-2$ and $j = i+1$. Then $w^{-1}$ sends the root subgroups associated to $e_i - e_{i+1}$ and $e_{i+1} - e_{i+2}$ into $L^F$. We claim that their images must lie in the same block of $L^F$. Indeed, if they belonged to different blocks, then the image of the root subgroup associated to $e_i - e_{i+2}$ (which is contained in $M^F$) under conjugation by $w^{-1}$ would not lie in $L^F$, a contradiction.

Thus, the first statement tells us that $w^{-1}$ sends the indices $i,j$ in the lemma to the same interval determined by $J$.

The second statement follows easily from the first.
\end{proof}

We identify $S_n$ with the subgroup of $S_{nr}$ that permutes the $n$ blocks of size $r$; explicitly, for $\sigma \in S_n$, $0 \le a \le n-1$ and $1 \le j \le r$, the action is given by $\sigma(ar + j) = \sigma(a) r + j$. The parabolic $Q^F$ given above determines a Young subgroup of $S_n$. Let $\Phi_0$ be the root system of type $A_{n-1}$, and let $\Delta_0$ be the set of simple roots determined by $P^F$. Suppose $Q^F$ determines a subset $J_0$ of $\Delta_0$, and let $S_{J_0}$ be the subgroup of $S_n$ generated by $J_0$. We also let $X$ be the set of distinguished representatives of $S_n/S_{J_0}$.

\begin{lemma}
\label{lem:Hom-decomp}
$$
\operatorname{Hom}_{P^F}\bigl(\tau,\; \operatorname{Ind}_{Q^F}^{G_{nr}^F}(\rho)\bigr) \cong \bigoplus_{w \in X} \operatorname{Hom}_{P^F \cap L^F}(\tau, \rho).
$$
This isomorphism sends $\Phi$ on the left to the tuple
$$
v \longmapsto \bigl( \Phi(N_{w^{-1}} v)(w^{-1}) \bigr)_{w \in X},
$$
where $N_{w^{-1}}$ is an intertwiner satisfying $\tau(w x w^{-1}) N_{w^{-1}} = N_{w^{-1}} \tau(x)$ for $x \in M^F$.
\end{lemma}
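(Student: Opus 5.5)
This is a Mackey-type computation. We compute $\operatorname{Res}_{P^F}\operatorname{Ind}_{Q^F}^{G_{nr}^F}(\rho)$ and keep only the pieces on which the cuspidal $\tau$ can map nontrivially.

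\textbf{Step 1: Mackey decomposition.} By Frobenius reciprocity and the Mackey formula we get
$$
\operatorname{Hom}_{P^F}\bigl(\tau,\operatorname{Ind}_{Q^F}^{G_{nr}^F}\rho\bigr)=\bigoplus_{w\in P^F\backslash G_{nr}^F/Q^F}\operatorname{Hom}_{P^F\cap wQ^Fw^{-1}}\bigl(\tau,{}^{w}\rho\bigr).
$$
Here $\tau$ is inflated from $M^F$ and is trivial on $N^F$. The $w$-summand is realised by $\Phi\mapsto\bigl(v\mapsto\Phi(v)(w^{-1})\bigr)$, suitably twisted.

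\textbf{Step 2: cuspidality kills most double cosets.} The double cosets are represented by minimal-length elements $w$ of $(S_r)^n\backslash S_{nr}/(S_{n_1r}\times\cdots\times S_{n_tr})$. For such $w$, the group $M^F\cap wQ^Fw^{-1}$ is a parabolic subgroup of $M^F$, and the unipotent part coming from $w(U^F)w^{-1}$ acts trivially on ${}^w\rho$. So a nonzero map $\tau\to{}^w\rho$ factors through a Harish-Chandra restriction of $\tau$ to a proper Levi of $M^F$, unless $M^F\subseteq wQ^Fw^{-1}$. Since $\tau=\tau_0^{\otimes n}$ is cuspidal, those restrictions vanish, and only double cosets with $M^F\subseteq M^F\cap wQ^Fw^{-1}$ survive.

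\textbf{Step 3: identify the survivors with $X$.} For a surviving $w$, the preceding lemma gives $w_1$ in the same double coset which permutes the $r$-blocks rigidly. That is, $w_1$ lies in the image of $S_n\subset S_{nr}$, and it preserves $M^F$. Two such block permutations give the same $P^F$–$Q^F$ double coset exactly when they differ on the right by $S_{J_0}$. The point is that $(S_r)^n$ is absorbed by $M^F$ and commutes with the block permutations, so these double cosets are $S_n/S_{J_0}$. Taking minimal-length representatives gives the set $X$. The summand for $w\in X$ is then
$$
\operatorname{Hom}_{M^F}\bigl(\tau,{}^{w}\rho|_{M^F\cap wL^Fw^{-1}}\bigr).
$$
Composing with conjugation by $w^{-1}$ and the intertwiner $N_{w^{-1}}$ turns this into $\operatorname{Hom}_{P^F\cap L^F}(\tau,\rho)$, using that $\tau$ is $w$-stable. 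This gives exactly the stated map $v\mapsto\Phi(N_{w^{-1}}v)(w^{-1})$.

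\textbf{Main obstacle.} I expect Step 2 to be the hardest, for minimal-length $w$ that do not normalise $M^F$. There one has to show carefully that $P^F\cap wQ^Fw^{-1}$ contains the unipotent radical of a proper parabolic of $M^F$, and that this radical acts trivially on ${}^w\rho$. I would first try to argue root by root: if some simple root of $M^F$ is sent by $w^{-1}$ outside $L^F$, then by the argument in the proof of the preceding lemma it must go into $U^F$ or its opposite. That exhibits the required unipotent subgroup, after which cuspidality of $\tau_0$ finishes the step.
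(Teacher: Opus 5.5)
Your skeleton is the paper's: Frobenius reciprocity plus Mackey, cuspidality of $\tau$ to discard double cosets with $M^F\not\subseteq wQ^Fw^{-1}$, and the preceding lemma to move the survivors into the block permutations $S_n$. Your check that two block permutations lie in the same double coset iff they differ on the right by $S_{J_0}$ is correct. It works because $(S_r)^n$ is normalized by $S_n$ and contained in $S_{n_1r}\times\cdots\times S_{n_tr}$, and it is more explicit than the paper's ``modulo $S_{n_1r}\times\cdots$''. Step 2 is not the real obstacle. It is the standard fact that, for minimal-length $w$, $M^F\cap wQ^Fw^{-1}$ is a parabolic of $M^F$ with unipotent radical $M^F\cap wU^Fw^{-1}$. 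If you prove this root by root, a single simple root subgroup is not enough; you need all $U_\beta$ with $\beta>0$ in $M^F$ and $w^{-1}\beta$ a root of $U^F$.

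The gap is in Step 3. For $w\in X$ one has $M^F\subseteq wL^Fw^{-1}$, so $P^F\cap wQ^Fw^{-1}=M^F\,(N^F\cap wL^Fw^{-1})\,(N^F\cap wU^Fw^{-1})$. The last factor acts trivially on both sides, but $N^F\cap wL^Fw^{-1}$ does not act trivially on ${}^w\rho$. So the $w$-summand is $\operatorname{Hom}_{M^F}\bigl(\tau,({}^w\rho)^{N^F\cap wL^Fw^{-1}}\bigr)$, not $\operatorname{Hom}_{M^F}(\tau,{}^w\rho|_{M^F})$ as you wrote.

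Your space is genuinely too large. Take $n=2$, $r=1$, $Q^F=G_2^F$, $\rho=\operatorname{St}$, $w=1$, with $T,B$ the diagonal torus and upper Borel of $G_2^F$. Then $\dim\operatorname{Hom}_{T}(1,\operatorname{St}|_T)=2$, but $\dim\operatorname{Hom}_{B}(1,\operatorname{St})=1$.

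After conjugating by $w^{-1}$, the correct summand becomes $\operatorname{Hom}_{M^F}\bigl(\tau,\rho^{\,w^{-1}N^Fw\cap L^F}\bigr)$. To identify it with $\operatorname{Hom}_{P^F\cap L^F}(\tau,\rho)=\operatorname{Hom}_{M^F}(\tau,\rho^{N^F\cap L^F})$, you need $w^{-1}N^Fw\cap L^F=N^F\cap L^F$. This is exactly where the minimal-length choice $w\in X$ is used: $w$ sends the positive roots of $S_{J_0}$ to positive roots, so $w(N^F\cap L^F)w^{-1}\subseteq N^F$. The $w$-stability of $\tau$, which you invoke instead, only handles the Levi part.

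The same point is what makes your explicit map land in the right space. For $n\in N^F\cap L^F$, $\rho(n)\Phi(N_{w^{-1}}v)(w^{-1})=\Phi\bigl(\tau(wnw^{-1})N_{w^{-1}}v\bigr)(w^{-1})$. This equals $\Phi(N_{w^{-1}}v)(w^{-1})$ only because $wnw^{-1}\in N^F$. For a non-distinguished representative of the same double coset, the map would not in general land in $\operatorname{Hom}_{P^F\cap L^F}(\tau,\rho)$. The paper's proof states precisely this (``since $w$ is distinguished \ldots\ $w^{-1}N^Fw\cap L^F=N^F\cap L^F$''), and adding it repairs your argument.
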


\begin{proof}
By Frobenius reciprocity,
$$
\operatorname{Hom}_{P^F}\bigl(\tau,\; \operatorname{Ind}_{Q^F}^{G_{nr}^F}(\rho)\bigr) \cong \operatorname{Hom}_{Q^F}\bigl( \operatorname{Ind}_{P^F}^{G_{nr}^F}(\tau),\; \rho \bigr).
$$
For simplicity we temporarily write $i_{P^F}^{G_{nr}^F}$ and $r_{P^F}^{G_{nr}^F}$ for Harish-Chandra induction and restriction respectively; the indices may vary. Let $\Lambda$ be a set of double coset representatives for
$$
(S_r \times S_r \times \cdots \times S_r)  w  (S_{n_1r} \times S_{n_2r} \times \cdots \times S_{n_tr}).
$$
By the Mackey formula,
$$
r^{G_{nr}^F}_{Q^F} \, i^{G_{nr}^F}_{P^F}(\tau) \cong \bigoplus_{w \in \Lambda} i^{L^F}_{L^F \cap w^{-1}P^Fw} \, \operatorname{ad}(w) \, r^{M^F}_{wQ^Fw^{-1} \cap M^F}(\tau).
$$
Since $\tau$ is cuspidal, if $wQ^Fw^{-1} \cap M^F \neq M^F$, then $r^{M^F}_{wQ^Fw^{-1} \cap M^F}(\tau) = 0$. By Lemma~11.1, every double coset which gives a non-zero contribution admits a representative in $S_n$ of $S_{nr}$ (with the identification given above). Modulo $S_{n_1r} \times S_{n_2r} \times \cdots \times S_{n_tr}$, we can therefore assume that these representatives are exactly the elements of $X$.

Hence
$$
\operatorname{Hom}_{Q^F}\bigl( \operatorname{Ind}_{P^F}^{G_{nr}^F}(\tau),\; \rho \bigr) \cong \bigoplus_{w \in X} \operatorname{Hom}_{L^F}\bigl( i^{L^F}_{L^F \cap w^{-1}P^Fw}({}^{w}\tau),\; \rho \bigr),
$$
where ${}^{w}\tau(x) = \tau(w x w^{-1})$. By Frobenius reciprocity again, this equals
$$
\bigoplus_{w \in X} \operatorname{Hom}_{L^F \cap w^{-1}P^Fw}({}^{w}\tau,\; \rho).
$$

Now $L^F \cap w^{-1}P^Fw = (L^F \cap w^{-1}M^F w)(L^F \cap w^{-1}N^F w) = M^F (L^F \cap w^{-1}N^F w)$. Consequently,
$$
\bigoplus_{w \in X} \operatorname{Hom}_{L^F \cap w^{-1}P^Fw}({}^{w}\tau,\; \rho) \cong \bigoplus_{w \in X} \operatorname{Hom}_{M^F}\bigl( {}^{w}\tau,\; \rho^{\,w^{-1}N^F w \cap L^F} \bigr).
$$

For a root $\alpha$ in the root system of type $A_{n-1}$, denote by $N_\alpha$ the corresponding block root subgroup; e.g.\ for $\alpha = e_i - e_j$, $N_\alpha$ consists of matrices with $I_r$ on the diagonal and an arbitrary element of $M_r(\mathbb{F}_q)$ in the $(i,j)$ block. Since $w$ is distinguished (i.e., $w^{-1}$ sends $J_0$ to positive roots and $-J_0$ to negative roots), we have $w(\alpha) > 0$ for all $\alpha \in J_0$ and $w(\alpha) < 0$ for all $\alpha \in -J_0$. Therefore
$$
w^{-1}N^F w \cap L^F = \prod_{\substack{\alpha \in \Phi_{J_0}^+ \cup \Phi_{J_0}^- \\ w(\alpha) > 0}} N_\alpha = N^F \cap L^F.
$$
Moreover, ${}^{w}\tau \cong \tau$. Thus the direct sum above becomes
$$
\bigoplus_{w \in X} \operatorname{Hom}_{M^F}\bigl( \tau,\; \rho^{\,N^F \cap L^F} \bigr)
\cong \bigoplus_{w \in X} \operatorname{Hom}_{M^F}\bigl( \tau,\; \rho^{\,N^F \cap L^F} \bigr)
\cong \bigoplus_{w \in X} \operatorname{Hom}_{P^F \cap L^F}(\tau, \rho).
$$

It is routine to check that the composite isomorphism sends a map $\Phi$ on the left to the tuple
$$
v \longmapsto \bigl( \Phi(N_{w^{-1}} v)(w^{-1}) \bigr)_{w \in X},
$$
where $N_{w^{-1}}$ is an intertwiner satisfying $\tau(w x w^{-1}) N_{w^{-1}} = N_{w^{-1}} \tau(x)$ for $x \in M^F$.
\end{proof}

\begin{remark}
\begin{enumerate}
\item The intertwiner $N_{w^{-1}}$ is a scalar multiple of $\gamma_{w^{-1}}$ from Section~3.
\item The set of double coset representatives is much larger than $X$, but the cuspidality of $\tau$ forces only the elements of $X$ to give a non‑zero contribution.
\end{enumerate}
\end{remark}

For an irreducible representation $(V,\rho)$ of $G_{mr}^F$ which lies in the Harish-Chandra series induced from $\tau_0 \otimes \tau_0 \otimes \cdots \otimes \tau_0$ ($m$ times), we use the notation $\theta_m(\rho)$ to denote $\operatorname{Hom}_{P^F}(\tau_0 \otimes \tau_0 \otimes \cdots \otimes \tau_0, \rho)$, where $P^F$ is the standard parabolic subgroup corresponding to $\tau_0 \otimes \tau_0 \otimes \cdots \otimes \tau_0$ ($m$ times). We always omit the index $m$ because it will not cause confusion. We write the Iwahori--Hecke algebra $\mathcal{H}(S_n,q^r)$ as $H_n$ for short, and the subalgebra determined by $Q^F$ as $H_{J_0}$; here $J_0$ is a subset of simple reflections of $S_n$.

\begin{lemma}
\label{lem:theta-induction}
$$
\theta\bigl(\operatorname{Ind}_{Q^F}^{G_{nr}^F}(\rho)\bigr) \cong H_n \otimes_{H_{J_0}} \bigl( \theta(\rho_1) \otimes \theta(\rho_2) \otimes \cdots \otimes \theta(\rho_t) \bigr)
$$
as an isomorphism of $H_n$-modules. This isomorphism sends a map $\Phi$ to
$$
\sum_{w \in X} q^{(r(r-1)\ell(w))/2} \, T_w \otimes \Phi(\gamma_{w^{-1}}(v))(w^{-1}),
$$
viewed as a function of $v$.
\end{lemma}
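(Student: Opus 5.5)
The plan is to use Lemma~\ref{lem:Hom-decomp} as the vector-space skeleton and then check by hand that the explicit map is $H_n$-linear. First, $\theta(\rho_i)$ is an $H_{n_i}$-module by Section~3. The parabolic subalgebra $H_{J_0}\cong H_{n_1}\otimes\cdots\otimes H_{n_t}$ then acts on $\theta(\rho_1)\otimes\cdots\otimes\theta(\rho_t)$. This tensor product is identified with $\operatorname{Hom}_{P^F\cap L^F}(\tau,\rho)$, because $P^F\cap L^F$ is the product of the standard parabolics of the blocks $L_j^F$ and $\tau$ is the corresponding tensor product.

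Since $H_n$ is free over $H_{J_0}$ with basis $\{T_w\mid w\in X\}$, the right-hand side is, as a vector space, $\bigoplus_{w\in X}T_w\otimes(\cdots)$. Comparing with Lemma~\ref{lem:Hom-decomp} shows that the stated map
\[
\Phi\longmapsto\sum_{w\in X}q^{r(r-1)\ell(w)/2}\,T_w\otimes\Phi(\gamma_{w^{-1}}(v))(w^{-1})
\]
is a linear bijection. Here $N_{w^{-1}}$ is replaced by $\gamma_{w^{-1}}$ via the Remark after Lemma~\ref{lem:Hom-decomp}, and the nonzero scalar $q^{r(r-1)\ell(w)/2}$ does not affect bijectivity. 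It therefore remains to prove $H_n$-linearity. For this it suffices to check that the map intertwines each generator $T_s$, $s$ a simple reflection of $S_n$.

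For the generators we compute both sides. On the left, Section~3 gives
\[
T_s*\Phi(v)=q^{(r^2+r)/2}\,e_{N^F}s\,\Phi(\gamma_s v),
\]
where $\Phi(\gamma_sv)$ is a function in $\operatorname{Ind}_{Q^F}^{G_{nr}^F}(\rho)$. To get the $w$-component, evaluate at $w^{-1}$ after inserting $\gamma_{w^{-1}}$:
\[
q^{(r^2+r)/2}\,|N^F|^{-1}\sum_{n\in N^F}\Phi(\gamma_s\gamma_{w^{-1}}v)(w^{-1}ns).
\]
Then split according to Deodhar's lemma applied to $w^{-1}$ and $s$, matching the three cases of Lemma~\ref{lem:Ts-action}.

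In the first two cases the right-coset arguments are clean. If $sw\in X$ with $\ell(sw)>\ell(w)$, the sum over $N^F$ collapses after the change of variables $w^{-1}ns=(w^{-1}s)(sns)$. Only the part of $N^F$ conjugated by $s$ into $N^F\cap L$-type subgroups survives, and we obtain the $(sw)$-component, matching $T_{sw}\otimes\cdot$. If $sw=wu$ with $u\in J_0$, the element lands in the same coset, and the computation reproduces the Section~3 formula for $T_u$ acting on $\theta(\rho_j)$ inside the block $L_j^F$. The case $\ell(sw)<\ell(w)$ follows from the first case together with the quadratic relation $T_s^2=(q^r-1)T_s+q^r$, which both sides satisfy. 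So only the first two cases need direct computation.

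The main obstacle is bookkeeping of normalizations. The factor $q^{(r^2+r)\ell(w)/2}$ appears in $T_w$ of Section~3, while the proposed normalization is $q^{r(r-1)\ell(w)/2}$. The discrepancy, $q^{r\ell(w)}$, must be exactly what comes out of averaging $e_{N^F}$ over the unipotent directions $N_\alpha\cong M_r(\mathbb F_q)$ (of size $q^{r^2}$) that are not absorbed by $Q^F$. I would first verify the exponents carefully for $n=2$, $t=2$, where $X=\{1,s\}$. In that case the Mackey formula computation can be done explicitly and the scalars pinned down, after which the general case follows by induction on $\ell(w)$ using $\gamma_{w'w}=\gamma_w\gamma_{w'}$.
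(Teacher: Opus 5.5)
Your plan is essentially the paper's proof: the linear bijection comes from Lemma~\ref{lem:Hom-decomp}, and the generators $T_s$ are checked case by case via Deodhar's lemma. The same-coset case reduces to the Section~3 formula for $T_u$ inside $L^F$. The component that would require the messy $N_{-\alpha}$-average is obtained from the quadratic relation $T_s^2=(q^r-1)T_s+q^r$, exactly as the paper does for the coefficient of $T_{s_\alpha w}$.

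The normalization you flag is not an obstacle and needs no induction. When $w,s_\alpha w\in X$ with $\ell(s_\alpha w)>\ell(w)$, the root $w^{-1}(\alpha)$ is positive and outside $\Phi_{J_0}$, so the whole $e_{N^F}$-average collapses and contributes no scalar. The $w$-components then agree because $q^{r(r+1)/2}=q^{r}\cdot q^{r(r-1)/2}$. The factor $q^r$ comes from $T_s(T_{sw}\otimes m)=q^rT_w\otimes m+(q^r-1)T_{sw}\otimes m$, not from averaging over $N_\alpha$.
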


\begin{proof}
Set $N_0^F = N^F \cap L^F$. By standard facts about Hecke algebras, the right-hand side is isomorphic to $|X|$ copies of $\theta(\rho_1) \otimes \theta(\rho_2) \otimes \cdots \otimes \theta(\rho_t)$ as a vector space, and Lemma~11.2 tells us that the left-hand side is likewise isomorphic to $|X|$ copies of the same space. Hence the two sides are at least isomorphic as vector spaces, and the map sending $\Phi$ to $\sum_{w \in X} q^{(r(r-1)\ell(w))/2} T_w \otimes \Phi(\gamma_{w^{-1}}(v))(w^{-1})$ is an isomorphism of vector spaces. Therefore it suffices to verify that this map preserves the $H_n$-module structure.

We only need to check the action of the generators $T_{s_\alpha}$, where $\alpha$ runs over the simple roots of $A_{n-1}$. At the end of Section~3 we computed
$$
(T_{s_\alpha} * \Phi)(v)(x) = q^{r(r+1)/2} \bigl( e_{N^F} s_\alpha \Phi(\gamma_{s_\alpha} v) \bigr)(x) = q^{r(r+1)/2} \Phi(\gamma_{s_\alpha}(v))(x e_{N^F} s_\alpha).
$$
Applying the vector space isomorphism described above, we send $T_{s_\alpha} * \Phi(v)$ to
\begin{align*}
\sum_{w \in X} q^{(r(r-1)\ell(w))/2 + r(r+1)/2} T_w &\otimes \Phi(\gamma_{s_\alpha}\gamma_{w^{-1}}(v))(w^{-1} e_{N^F} s_\alpha) \\
= \sum_{w \in X} q^{(r(r-1)\ell(w))/2 + r(r+1)/2} T_w &\otimes \Phi(\gamma_{w^{-1}s_\alpha}(v))(w^{-1} e_{N^F} s_\alpha).
\end{align*}
Here the isomorphism is designed so that $T_{s_\alpha} * \Phi$ acts on $\gamma_{w^{-1}}(v)$, which is why the intertwiner becomes $\gamma_{s_\alpha} \gamma_{w^{-1}}$.

Before proceeding, we claim that for all $u_1, u_2 \in N^F$,
$$
\Phi(\gamma_{w^{-1}}(v))(u_1 w^{-1} u_2) = \Phi(\gamma_{w^{-1}}(v))(w^{-1}).
$$
Indeed, write $u_1 = u_0 l$ with $u_0 \in U^F$ and $l \in L^F \cap N^F$. Then
\begin{align*}
\Phi(\gamma_{w^{-1}}(v))(u_1 w^{-1} u_2)
&= \Phi(\gamma_{w^{-1}}(v))(u_0 l w^{-1} u_2) \\
&= \Phi(\gamma_{w^{-1}}(v))(l w^{-1} u_2) \\
&= \Phi(\gamma_{w^{-1}}(v))(w^{-1} w l w^{-1} u_2).
\end{align*}
Since $w$ is distinguished, we have $w l w^{-1} \in N^F$. By definition of $\Phi$, the function $x \mapsto \Phi(\gamma_{w^{-1}}(v))(w^{-1} x)$ is right $N^F$-invariant:
\begin{align*}
\Phi(\gamma_{w^{-1}}(v))(w^{-1} u)
= (u \Phi(\gamma_{w^{-1}}(v)))(w^{-1})
= \Phi(u \gamma_{w^{-1}}(v))(w^{-1})
= \Phi(\gamma_{w^{-1}}(v))(w^{-1}).
\end{align*}
Thus the claim follows.

Using right $N^F$-invariance, the expression above simplifies to
$$
\sum_{w \in X} q^{(r(r-1)\ell(w))/2 + r(r+1)/2} T_w \otimes \Phi(\gamma_{w^{-1}s_\alpha}(v))(w^{-1} e_{N_\alpha} s_\alpha).
$$

Now we analyse the action of $T_{s_\alpha}$ on the induced module. Fix $w \in X$ and consider the term $q^{(r(r-1)\ell(w))/2} T_w \otimes \Phi(\gamma_{w^{-1}}(v))(w^{-1})$. We distinguish three cases according to Deodhar's lemma.

\noindent\textbf{Case 1:} $s_\alpha w = w s_\beta$ with $\beta \in J_0$.
This means $s_\alpha = w s_\beta w^{-1} = s_{w(\beta)}$, and because $\alpha,\beta$ are positive and $w$ is distinguished, we obtain $\alpha = w(\beta)$. Then
\begin{align*}
T_{s_\alpha} \cdot \bigl( q^{(r(r-1)\ell(w))/2} T_w &\otimes \Phi(\gamma_{w^{-1}}(v))(w^{-1}) \bigr) \\
&= q^{(r(r-1)\ell(w))/2} T_w \otimes T_{s_\beta} * \bigl( \Phi(\gamma_{w^{-1}}(v))(w^{-1}) \bigr) \\
&= q^{(r(r-1)\ell(w))/2 + r(r+1)/2} T_w \otimes \rho(e_{N_0^F} s_\beta) \Phi(\gamma_{w^{-1}} \gamma_{s_\beta}(v))(w^{-1}).
\end{align*}
Note that
\begin{align*}
\rho(e_{N_0^F} s_\beta) \Phi(\gamma_{s_\beta w^{-1}}(v))(w^{-1})
&= \Phi(\gamma_{s_\beta w^{-1}}(v))(e_{N_0^F} s_\beta w^{-1}) \\
&= \Phi(\gamma_{w^{-1} s_\alpha}(v))(e_{N_\beta} s_\beta w^{-1}) \quad (\text{right $N^F$-invariance}).
\end{align*}

Since $e_{N_0^F}s_\beta w^{-1}=e_{N_\beta}e_{N^{\beta}}s_\beta w^{-1}=e_{N_\beta}s_\beta w^{-1}e_{w s_{\beta}(N^{\beta})s_\beta w^{-1}}$ (set $N_0^F=N_\beta N^{\beta}$), and $s_\beta$ permutes $\Phi_{J_0}^+ \setminus \{\beta\}$ while $w(\Phi_{J_0}^+)>0$, we obtain the last equality.
Now $e_{N_\beta} s_\beta w^{-1} = s_\beta w^{-1} e_{N_{w s_\beta(\beta)}} = s_\beta w^{-1} e_{N_{w(-\beta)}} = s_\beta w^{-1} e_{N_{-\alpha}} = w^{-1} s_\alpha e_{N_{-\alpha}} = w^{-1} e_{N_\alpha} s_\alpha$.
Thus
$$
\rho(e_{N_0^F} s_\beta) \Phi(\gamma_{s_\beta w^{-1}}(v))(w^{-1})
= \Phi(\gamma_{w^{-1} s_\alpha}(v))(w^{-1} e_{N_\alpha} s_\alpha).
$$
This is exactly the term produced by the left action, so the two sides agree.

\noindent\textbf{Case 2:} $s_\alpha w \in X$ and $\ell(s_\alpha w) = 1 + \ell(w)$.
Here we compare the contributions of $w$ and $s_\alpha w$. After applying $T_{s_\alpha}$ on the left, the terms involving $w$ and $s_\alpha w$ are
\begin{align*}
& q^{(r(r-1)\ell(w))/2 + r(r+1)/2} T_w \otimes \Phi(\gamma_{w^{-1}s_\alpha}(v))(w^{-1} e_{N_\alpha} s_\alpha) \\
&\quad + q^{(r(r-1)\ell(w) + r(r-1))/2 + r(r+1)/2} T_{s_\alpha w} \otimes \Phi(\gamma_{w^{-1}}(v))(w^{-1} s_\alpha e_{N_\alpha} s_\alpha).
\end{align*}
Because $\ell(s_\alpha w) > \ell(w)$, we have $w^{-1}(\alpha) > 0$, and hence $w^{-1} e_{N_\alpha} s_\alpha = e_{N_{w^{-1}(\alpha)}} w^{-1} s_\alpha$. Using the claim ($s_\alpha w$ is distinguished), the first summand becomes
$$
q^{(r(r-1)\ell(w))/2 + r(r+1)/2} T_w \otimes \Phi(\gamma_{w^{-1}s_\alpha}(v))(w^{-1} s_\alpha).
$$
Now compute the right action of $T_{s_\alpha}$ on the two basis vectors. By the induction formula (Lemma~9.6), the relevant part is
\begin{align*}
& q^{(r(r-1)\ell(w))/2} \Bigl( T_{s_\alpha w} \otimes \bigl( \Phi(\gamma_{w^{-1}}(v))(w^{-1}) + (q^r-1) q^{r(r-1)/2} \Phi(\gamma_{w^{-1}s_\alpha}(v))(w^{-1} s_\alpha) \bigr) \\
&\qquad\qquad + T_w \otimes q^{r(r+1)/2} \Phi(\gamma_{w^{-1}s_\alpha}(v))(w^{-1} s_\alpha) \Bigr).
\end{align*}
We compare the coefficients of $T_w$ and $T_{s_\alpha w}$.

\noindent Coefficient of $T_w$: on the left we have $q^{(r(r-1)\ell(w))/2 + r(r+1)/2} \Phi(\gamma_{w^{-1}s_\alpha}(v))(w^{-1} s_\alpha)$; on the right we get the same expression $q^{(r(r-1)\ell(w))/2 + r(r+1)/2} \Phi(\gamma_{w^{-1}s_\alpha}(v))(w^{-1} s_\alpha)$. They coincide.

\noindent Coefficient of $T_{s_\alpha w}$: on the left we have
$$
q^{(r(r-1)\ell(w) + r(r-1))/2 + r(r+1)/2} \Phi(\gamma_{w^{-1}}(v))(w^{-1} s_\alpha e_{N_\alpha} s_\alpha),
$$
while on the right we have
$$
q^{(r(r-1)\ell(w))/2} \Bigl( \Phi(\gamma_{w^{-1}}(v))(w^{-1}) + (q^r-1) q^{r(r-1)/2} \Phi(\gamma_{w^{-1}s_\alpha}(v))(w^{-1} s_\alpha) \Bigr).
$$
Multiplying the right-hand side by $q^r$, it is enough to prove
\begin{align*}
q^{r(r+1)} \Phi(\gamma_{w^{-1}}(v))(w^{-1} s_\alpha e_{N_\alpha} s_\alpha)
= q^r \Phi(\gamma_{w^{-1}}(v))(w^{-1}) + (q^r-1) q^{r(r+1)/2} \Phi(\gamma_{w^{-1}s_\alpha}(v))(w^{-1} s_\alpha).
\end{align*}
Notice that
$$
w^{-1}e_{N^F}s_{\alpha}e_{N_\alpha}s_\alpha = w^{-1}e_{N_\alpha}s_\alpha e_{N_\alpha}s_\alpha e_{N^{\alpha}} = e_{N_{w^{-1}(\alpha)}} w^{-1}s_\alpha e_{N_\alpha}s_\alpha e_{N^{\alpha}},
$$
where $N^F = N_{\alpha}N^{\alpha}$.
Note that $w^{-1}(\alpha)>0$. Since $w$ and $s_\alpha w$ are both distinguished, if $w^{-1}(\alpha) \in \Phi_{J_0}^+$, then $s_\alpha w(w^{-1}(\alpha))>0$, that is $-\alpha>0$, a contradiction. Hence $N_{w^{-1}(\alpha)} \subset U^F$. The left factor can therefore be cancelled by the left $U^F$-invariance, and the right factor can also be cancelled by the right $N^F$-invariance. Thus we obtain:
$$
q^{r(r+1)} \Phi(\gamma_{w^{-1}}(v))(w^{-1} s_\alpha e_{N_\alpha} s_\alpha) = q^{r(r+1)} \Phi(\gamma_{w^{-1}}(v))(w^{-1} e_{N_\alpha}s_\alpha e_{N_\alpha} s_\alpha) .
$$
The same argument shows that $\Phi(\gamma_{w^{-1}s_\alpha}(v))(w^{-1} s_\alpha) = \Phi(\gamma_{w^{-1}s_\alpha}(v))(w^{-1} e_{N_\alpha}s_\alpha)$.
Also notice that the map
$$
v \longmapsto \bigl( x \mapsto \Phi(\gamma_{w^{-1}}(v))(x) \bigr)
$$
is an element of $\theta\bigl( \operatorname{Ind}_{Q^F}^{G_{nr}^F}(\rho) \bigr)$. Applying $T_{s_\alpha}$ twice to this element, evaluating at $w^{-1}$, and taking the vector value at $\gamma_{w^{-1}}(v)$ gives the left-hand side above, while applying $T_{s_\alpha}$ once, evaluating at $w^{-1}$, and taking the vector value at $\gamma_{w^{-1}}(v)$ gives $q^{r(r+1)/2} \Phi(\gamma_{w^{-1}s_\alpha}(v))(w^{-1} s_\alpha)$, and evaluating at $w^{-1}$ with vector value at $\gamma_{w^{-1}}(v)$ yields $\Phi(\gamma_{w^{-1}}(v))(w^{-1})$. The desired equality is thus exactly the quadratic relation
$$
T_{s_\alpha}^2 = (q^r-1) T_{s_\alpha} + q^r.
$$
Hence the coefficients match.

\noindent\textbf{Case 3:} $s_\alpha w \in X$ and $\ell(s_\alpha w) = \ell(w) - 1$.
This reduces to Case~2 by replacing $w$ with $s_\alpha w$.

Since the map preserves the action of every generator $T_{s_\alpha}$, it is an isomorphism of $H_n$-modules.
\end{proof}

\begin{remark}
The notation is the same as in the setup at the beginning of Section~7. Suppose furthermore that $Q = LN$ is a parabolic subgroup of $G_{nr}$ containing $P$, and that $(\pi,V=V_1 \otimes V_2 \otimes \cdots \otimes V_t)$ is a representation of $L=G_{n_1r }\times G_{n_2r}\times \cdots \times G_{n_t r}$ lying in the Bernstein block $[M,\tau \otimes \tau \otimes \cdots \otimes \tau]_L$. Then, combining Theorem~4.3 and Proposition~7.1, we have in fact shown that
$$
\operatorname{Ind}_Q^{G_{nr}}(V)_\rho \cong H(S_n,q^r) \otimes_{H_{J_0}} \bigl( (V_1)_\rho \otimes (V_2)_\rho \otimes \cdots \otimes (V_t)_\rho \bigr)
$$
as $H(S_n,q^r)$-modules, where $H_{J_0}$ is the subalgebra associated to $Q$.
\end{remark}

\section{Depth 0 decomposition of a simple block}
Let $M$ be the Levi subgroup of $G_{nr}$ of the form $(G_r)^n$, and let $P = MN$ be the corresponding standard parabolic subgroup. Let $(\tau, W)$ be an irreducible depth-zero supercuspidal representation of $G_r$, and let $(K_r, \tau_0)$ be its Bushnell--Kutzko type; thus $\tau_0$ is an irreducible cuspidal representation of $G_r^F = \operatorname{GL}_r(\mathbb{F}_q)$, which we also regard as a representation of $K_r$ via inflation. We denote by
\[
\rho_n = \tau \otimes \tau \otimes \cdots \otimes \tau \quad (n \text{ times})
\]
the corresponding irreducible supercuspidal representation of $M$.

In this section we study the Bernstein block $[M, \rho_n]$. By the discussion of Section~7, if $(\pi, V)$ is an irreducible representation in this block, then every irreducible constituent of $V^{K_{nr+}}$ lies in the Harish-Chandra series
\[
\operatorname{Ind}_{P^F}^{G_{nr}^F} (\tau_0 \otimes \tau_0 \otimes \cdots \otimes \tau_0) \qquad (n \text{ times}).
\]
Irreducible representations in this Harish-Chandra series are parameterized by the simple modules of the Iwahori--Hecke algebra $\mathcal{H}(S_n, q^r)$. For an irreducible representation $\lambda$ of $S_n$ (which we may identify with a partition of $n$), we denote by $V_\lambda$ the corresponding irreducible representation occurring in the above Harish-Chandra series.

In this section we investigate the following problem. Let $(\pi, V) = \langle a \rangle$ be an irreducible representation in the Bernstein block $[M, \rho_n]$, where $a = (\Delta_1, \dots, \Delta_k)$ is an ordered multiset of segments. We assume that the cuspidal support of each $\Delta_i$ is contained in the Bernstein block $[\tau, G_r]_{G_r}$. Our goal is to describe the decomposition of 
$$
V^{K_{nr+}} = \langle a \rangle^{K_{nr+}}
$$
as a representation of the finite reductive group $K_{nr}/K_{nr+} \cong \operatorname{GL}_{nr}(\mathbb{F}_q)$.

In this section we will treat the Zelevinsky decomposition numbers $m(b;a)$ as known quantities. This is a reasonable assumption for the following reasons. The numbers $m(b;a)$ have been extensively studied and are deeply connected with Kazhdan--Lusztig polynomials; they admit purely combinatorial algorithms and are therefore, in principle, completely computable (for example, see \cite{Deng2023}). Moreover, the relative positions among different segments in a multiset are highly intricate, and the integers $m(b;a)$ can be viewed as encoding precisely this combinatorial complexity. Thus the appearance of a combinatorial invariant of this level of sophistication is unavoidable in any description of the decomposition we are aiming at.

Nevertheless, one need not be overly concerned about the complexity of the numbers $m(b;a)$. In most cases, these multiplicities can be described explicitly and are, in fact, multiplicity-free (cuspidal support multiplicity free). To make this precise, we first introduce some notation. For an ordered multiset of segments $a = (\Delta_1,\dots,\Delta_k)$, we denote by $|a| = k$ the number of segments in $a$. We say that $a$ has \emph{disjoint segments} if $\Delta_i \cap \Delta_j = \varnothing$ for all $i \neq j$ (\cite{Zelevinsky1980} Proposition~9.13).

\begin{proposition}\label{prop:disjoint-segments}
Let $a = (\Delta_1,\dots,\Delta_k)$ be an ordered multiset of segments. Assume that any two distinct segments in $a$ have empty intersection. Then
$$
\pi(a) = \sum_{b \le a} \langle b \rangle,
\qquad
\langle a \rangle = \sum_{b \le a} (-1)^{|a| - |b|} \, \pi(b).
$$
\end{proposition}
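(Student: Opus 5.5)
Since the segments of $a$ are pairwise disjoint, I would first record how the order $\le$ looks below $a$. An elementary operation replaces two linked segments $\Delta,\Delta'$ by $\Delta\cup\Delta'$ and $\Delta\cap\Delta'$. Disjoint segments can only be linked when they are juxtaposed, i.e.\ when $\Delta\cup\Delta'$ is a segment. In that case $\Delta\cap\Delta'=\varnothing$, so the operation simply merges two adjacent segments into one. The result again has pairwise disjoint segments, and its segment count drops by one. By induction, every $b\le a$ is obtained by choosing a partition of the set of segments of $a$ into blocks, where each block consists of segments that can be successively concatenated, and replacing each block by its union. In particular $|a|-|b|$ equals the number of merges. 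The merges performed in different orders from the same final grouping give the same $b$.

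\textbf{Step 1: $\pi(a)=\sum_{b\le a}\langle b\rangle$.} By Theorem~7.1 (Zelevinsky), $m(b;a)\neq 0$ exactly when $b\le a$. It therefore suffices to show $m(b;a)\le 1$. Here I would invoke the multiplicity-one result of \cite{Zelevinsky1980} Proposition~9.13 for multisegments with disjoint segments, which the paper cites when introducing the notion. If a self-contained argument is needed, I would first note that the cuspidal support of $\pi(a)$ is multiplicity free, since the segments are disjoint. Then I would use the geometric lemma: the full Jacquet module of $\pi(a)$ with respect to the minimal Levi $M$ is a sum of distinct characters, one for each shuffle of the segments respecting their internal order. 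Each $\langle b\rangle$ contributes a nonempty set of these exponents, namely its shuffles, so multiplicities are forced to be at most one. Equality with $\sum_{b\le a}\langle b\rangle$ would then follow from the count: every shuffle of $a$ appears in exactly one $\langle b\rangle$.

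\textbf{Step 2: the inversion.} I would now invert the unitriangular system of Corollary~\ref{cor:expansion-Zelevinsky}. The key observation is that the interval $\{c: b\le c\le a\}$ is isomorphic to a Boolean lattice on the set of merges of $b$ relative to $a$. Indeed, each $c$ in the interval is determined by which of the juxtaposition-merges producing $b$ have already been performed, and any subset of them can be performed. The rank of $c$ in this lattice is $|a|-|c|$. Hence the M\"obius function of $\le$ restricted to multisegments with disjoint segments is $\mu(b,a)=(-1)^{|a|-|b|}$. Applying M\"obius inversion to Step 1, applied to every $c\le a$ (each of which again has disjoint segments), gives
\[
\langle a\rangle=\sum_{b\le a}\mu(b,a)\,\pi(b)=\sum_{b\le a}(-1)^{|a|-|b|}\pi(b).
\]

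The main obstacle is the Boolean-lattice claim. One has to check that merges along a chain of juxtaposed segments, for example $\Delta_1,\Delta_2,\Delta_3$ with each adjacent pair juxtaposed, commute and are independent. One must also check that no other elementary operations become available after merging. Disjointness is preserved under merging, so only merges ever occur. The remaining issue is to describe the interval $[b,a]$ as the set of subsets of the edges of a forest, namely the adjacency edges used in forming $b$. I would prove this by encoding $b$ as a set partition of the segments of $a$ into contiguous chains, so that $c\in[b,a]$ corresponds to a refinement of that partition into sub-chains. This refinement poset is Boolean.
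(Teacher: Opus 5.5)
Your argument is correct. It takes a different route from the paper only in the sense that the paper gives no argument at all: it attributes both formulas to Zelevinsky's Proposition~9.13, cited where ``disjoint segments'' is defined.

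\textbf{Step 1.} You derive multiplicity one from results the paper has already recalled. Zelevinsky's Theorem~7.1 gives $m(b;a)\neq 0$ if and only if $b\le a$. For $m(b;a)\le 1$, the cuspidal support of $\pi(a)$ is a set, so by the geometric lemma its cuspidal-level Jacquet module is a multiplicity-free sum indexed by shuffles. Each irreducible constituent contributes at least one of these terms. Two small corrections apply here.
\begin{enumerate}
\item Outside the Iwahori case, the cuspidals are not characters and the relevant Levi is not minimal. You should work with the Levi subgroups of the cuspidal support, summed over the orderings, as in Zelevinsky's full Jacquet functor.
\item Your closing sentence about every shuffle lying in exactly one $\langle b\rangle$ is not needed. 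Once $m(b;a)\le 1$ and Theorem~7.1 are in hand, equality is immediate.
\end{enumerate}

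\textbf{Step 2.} The step you flag as the main obstacle is easier than you expect. Let $E(a)$ be the set of juxtaposed pairs of segments of $a$; along each cuspidal line these form disjoint paths. Send each $b\le a$ to the set $E_b\subseteq E(a)$ of boundaries erased. This map is an order-reversing bijection from $\{b: b\le a\}$ onto the power set of $E(a)$. Moreover $|a|-|b|=|E_b|$, and the juxtaposed pairs of $b$ correspond exactly to $E(a)\setminus E_b$. Hence every interval $[b,a]$ is Boolean and $\mu(b,a)=(-1)^{|a|-|b|}$, so your M\"obius inversion goes through verbatim.

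\textbf{Comparison.} Your route gives a self-contained justification of the sign formula. It also makes precise the paper's later remark, in the example section, that for such $a$ the order $b\le a$ ``reduces to merging neighbouring segments by erasing their common boundary.'' The paper's route is shorter but rests entirely on the citation.
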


Thus, for multisets with pairwise disjoint segments, all multiplicities appearing in the decomposition are either $0$ or $1$, and the coefficients in the expansion of $\langle a \rangle$ in terms of standard modules are given by a simple closed formula. This covers a lot of interesting cases.

In the remainder of this section we work under the convention that the decomposition numbers $m(b;a)$ are known. Consider the situation where
$$
\langle a \rangle = \operatorname{St}(\langle b \rangle),
$$
with $a$ and $b$ two ordered multisets of segments whose cuspidal support lies in the block $[\tau, G_r]_{G_r}$. By a result of Schneider--Stuhler \cite{SchneiderStuhler1997}, the involution $\operatorname{St}$ sends irreducible representations to irreducible representations, so the above equality makes sense.

By Corollary~\ref{cor:expansion-Zelevinsky}, we may expand
$$
\langle a \rangle = \sum_{d \le a} c_d \, \pi(d), \qquad
\langle b \rangle = \sum_{e \le b} c_e \, \pi(e),
$$
where the coefficients $c_d, c_e$ are integers determined by the Zelevinsky multiplicities $m(\cdot;\cdot)$. Under our convention that the $m(\cdot;\cdot)$ are known, the numbers $c_d$ and $c_e$ can be treated as known quantities as well.

Let $a = (\Delta_1,\dots,\Delta_k)$ be an ordered multiset of segments whose cuspidal support is contained in a single depth-zero Bernstein block of $G_r$. For a given $a$, let $\ell(\Delta_1),\dots,\ell(\Delta_k)$ be the lengths of its segments. Rearranging these $k$ integers in non-increasing order and taking the conjugate yields a partition, which we denote by $P(a)$. There is a simple combinatorial observation concerning this invariant: if $a$ and $b$ are two such multisets of segments with $b \le a$, then $P(b)$ is dominated by $P(a)$ in the sense of the dominance order on partitions. The reason is that an elementary Zelevinsky operation replaces two linked segments by their union and their intersection, which makes the longer segment even longer and the shorter segment even shorter; after sorting and taking the conjugate, this forces the resulting partition to move downward in the dominance order.

\begin{theorem}
\label{thm:main-decomposition}
The notation is as above.

\begin{enumerate}
\item[(a)]
Suppose that $(\pi,V)$ is generic. By Theorem~8.5,
$V$ is of the form
$$
V=\operatorname{St}(\langle\Delta_1\rangle)
\times\operatorname{St}(\langle\Delta_2\rangle)
\times\cdots\times
\operatorname{St}(\langle\Delta_t\rangle).
$$
Suppose that the length of $\Delta_i$ is $\ell_i$, and let
$\lambda$ be the partition obtained by taking the conjugate of
the partition determined by $(\ell_i)_{1\leq i\leq t}$.
Then
$$
V^{K_{nr+}}
\cong
\bigoplus_{\mu\unlhd\lambda}
K_{\mu',\lambda'}V_\mu,
$$
where $K_{\mu',\lambda'}$ is the Kostka number.

\item[(b)]
Suppose that $(\pi,V)=\langle a\rangle$ is an arbitrary
irreducible representation, and let $c_d,c_e$ be the
coefficients appearing above. Then
$$
V^{K_{nr+}}
\cong
\bigoplus_{P(a)'\unlhd\mu\unlhd P(b)}
m_{V,\mu}V_\mu,
$$
where the multiplicities can be expressed by linear combinations
of the coefficients $c_d$ (respectively $c_e$) and Kostka
numbers. More precisely,
$$
m_{V,\mu}
=
\sum_{d\leq a}c_dK_{\mu,P(d)'}
=
\sum_{e\leq b}c_eK_{\mu',P(e)'}.
$$
Moreover, the multiplicities of $V_{P(b)}$ and $V_{P(a)'}$ are
both equal to $1$.
\end{enumerate}
\end{theorem}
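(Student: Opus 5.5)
The plan is to compute $K_{nr+}$-fixed vectors of standard modules first and then pass to arbitrary irreducibles by linearity in the Grothendieck group. Let $a=(\Delta_1,\dots,\Delta_k)$ with segment lengths $n_1\ge\cdots\ge n_k$. Then $\pi(a)=\langle\Delta_1\rangle\times\cdots\times\langle\Delta_k\rangle$ is induced from the standard parabolic $Q$ with Levi $G_{n_1r}\times\cdots\times G_{n_kr}$.

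By the Remark after Lemma~\ref{lem:theta-induction}, which combines Theorem~4.3 with Proposition~7.1, we have
\[
\pi(a)_\rho\cong H_n\otimes_{H_{J_0}}\bigl(\langle\Delta_1\rangle_\rho\otimes\cdots\otimes\langle\Delta_k\rangle_\rho\bigr).
\]
By Proposition~10.3 each $\langle\Delta_i\rangle_\rho$ is the trivial module of $H_{n_i}$, and each $\operatorname{St}(\langle\Delta_i\rangle)_\rho$ is the sign module. Theorem~\ref{thm:induction_decomposition} then reduces the computation to Young's rule in $S_n$. Inducing trivial characters from $S_{n_1}\times\cdots\times S_{n_k}$ gives $\bigoplus_\mu K_{\mu,(n_i)}\rho_\mu$, where $(n_i)=P(a)'$. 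Inducing sign characters gives, by the Pieri computation of Section~9.1, $\bigoplus_\mu K_{\mu',(n_i)}\rho_\mu$. Since the $H_0$-module $V_\rho$ determines $V^{K_{nr+}}$ through Section~3, this yields
\[
\pi(a)^{K_{nr+}}\cong\bigoplus_\mu K_{\mu,P(a)'}V_\mu,
\qquad
\bigl(\operatorname{St}(\langle\Delta_1\rangle)\times\cdots\times\operatorname{St}(\langle\Delta_k\rangle)\bigr)^{K_{nr+}}\cong\bigoplus_\mu K_{\mu',P(a)'}V_\mu.
\]

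Part~(a) follows at once. For generic $V$, Theorem~8.5 says the product of Steinberg factors is irreducible and equal to $V$. With $\lambda'=(\ell_i)$ we get $V^{K_{nr+}}\cong\bigoplus_\mu K_{\mu',\lambda'}V_\mu$. The constraint $\mu\unlhd\lambda$ holds because $K_{\mu',\lambda'}\neq0$ forces $\mu'\unrhd\lambda'$, which is equivalent to $\mu\unlhd\lambda$.

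For part~(b) I use that taking $K_{nr+}$-fixed vectors is exact, so it defines a map on Grothendieck groups. Applying it to $\langle a\rangle=\sum_{d\le a}c_d\pi(d)$ gives the first formula $m_{V,\mu}=\sum_d c_dK_{\mu,P(d)'}$. For the second formula, apply the Aubert involution to $\langle b\rangle=\sum_e c_e\pi(e)$. Since $\operatorname{St}$ is a ring automorphism sending $\langle\Delta\rangle$ to $\operatorname{St}(\langle\Delta\rangle)$, we get $\langle a\rangle=\operatorname{St}\langle b\rangle=\sum_e c_e\prod_{\Delta\in e}\operatorname{St}(\langle\Delta\rangle)$. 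The sign-version computation above then gives $\sum_e c_eK_{\mu',P(e)'}$. Equivalently, one can use Theorem~4.5 together with Lemma~10.1 and apply the first formula to $\langle b\rangle$, then conjugate $\mu$.

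For the support bounds I would use the dominance observation stated before the theorem: $d\le a$ implies $P(d)\unlhd P(a)$, hence $P(d)'\unrhd P(a)'$. Since $K_{\mu,\nu}\neq0$ forces $\mu\unrhd\nu$, every $\mu$ occurring in the first formula satisfies $\mu\unrhd P(d)'\unrhd P(a)'$. In the same way the second formula forces $\mu'\unrhd P(e)'\unrhd P(b)'$, that is, $\mu\unlhd P(b)$.

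For the endpoint multiplicities take $\mu=P(a)'$ in the first formula. A term $c_dK_{P(a)',P(d)'}$ is nonzero only if $P(a)'\unrhd P(d)'$, and combined with $P(d)'\unrhd P(a)'$ this forces $P(d)=P(a)$. So $m_{V,P(a)'}=\sum_{d\le a,\,P(d)=P(a)}c_d$, and the term $d=a$ contributes $c_a=1$. The main obstacle is to show that every other $d<a$ has $P(d)\ne P(a)$, i.e.\ strict decrease. An elementary operation replaces lengths $x,y$ of two linked segments by $x',y'$ with $x'>\max(x,y)$ and $x'+y'=x+y$, or drops the empty intersection. Either way the sorted length multiset changes strictly in dominance, so $P(d)\lhd P(a)$ for every $d<a$. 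This gives $m_{V,P(a)'}=1$, and the symmetric argument on the $b$-side gives $m_{V,P(b)}=1$.
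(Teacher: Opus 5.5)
Your proposal is correct and follows essentially the same route as the paper. You compute the fixed vectors of $\pi(d)$ and of $\operatorname{St}(\langle\Delta_1\rangle)\times\cdots\times\operatorname{St}(\langle\Delta_k\rangle)$ via Proposition~10.3, the Hecke-induction isomorphism of Lemma~11.3, Theorem~9.7 and Young/Pieri, then expand $\langle a\rangle$ and $\operatorname{St}(\langle b\rangle)$ in the Grothendieck group and use the dominance observation for the bounds and the endpoint multiplicities. Your explicit check that each elementary operation strictly raises the sorted length partition in dominance is the strictness the paper's endpoint argument relies on.
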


\begin{proof}
Part (a) follows directly from Proposition~10.3, Lemma~\ref{lem:theta-induction},
Theorem~9.1, and Theorem~9.7.

For part (b), we note that taking $K_{nr+}$-fixed points is an
exact functor. Since
$$
\langle b\rangle=\sum_{e\leq b}c_e\,\pi(e),
\qquad
\langle a\rangle=\operatorname{St}(\langle b\rangle),
$$
by Proposition~10.3, Lemma~\ref{lem:theta-induction}, Theorem~9.1, Theorem~9.7,
together with the combinatorial observation above, we obtain the
upper bound $P(b)$ and the multiplicity formula
$$
m_{V,\mu}
=
\sum_{e\leq b}c_eK_{\mu',P(e)'}.
$$
For the lower bound, there are two possible arguments. First, one
may apply the preceding argument to
$$
\langle b\rangle=\operatorname{St}(\langle a\rangle)
$$
and then use Theorem~4.5 together with Lemma~10.1. Since the
duality exchanges $V_\mu$ and $V_{\mu'}$, the resulting upper
bound for $\langle b\rangle^{K_{nr+}}$ becomes the lower bound
$$
P(a)'\unlhd\mu
$$
for $\langle a\rangle^{K_{nr+}}$.

Alternatively, one may use directly the expansion
$$
\langle a\rangle=\sum_{d\leq a}c_d\,\pi(d).
$$
The decomposition of $\pi(d)^{K_{nr+}}$ is given by
$$
\operatorname{Ind}_{S_d}^{S_n}(1),
$$
where $S_d$ is the Young subgroup determined by the lengths of
the segments in $d$. Hence Young's rule gives
$$
m_{V,\mu}
=
\sum_{d\leq a}c_dK_{\mu,P(d)'}.
$$
Together with the combinatorial observation preceding the theorem,
this again yields
$$
P(a)'\unlhd\mu.
$$

Finally, the multiplicities of the two extremal constituents are
equal to $1$. Indeed, if $d<a$ (respectively $e<b$), then the
combinatorial observation above gives a strict dominance relation,
so the corresponding Kostka number for the extremal partition
vanishes. Hence only the term $d=a$ (respectively $e=b$)
contributes. Since
$$
c_a=c_b=1
$$
by Corollary~\ref{cor:expansion-Zelevinsky}, and the diagonal
Kostka number is equal to $1$, we obtain
$$
m_{V,P(a)'}=m_{V,P(b)}=1.
$$
\end{proof}

\begin{remark}
For generic representations, the situation is particularly explicit. By the strict positivity of Kostka numbers, the condition $K_{\mu',\lambda'} > 0$ is equivalent to the dominance relation $\mu \unlhd \lambda$. Thus, for a generic representation $V$ as in part (a), we know exactly which constituents $V_\mu$ appear in $V^{K_{nr+}}$: they are precisely those $\mu$ satisfying $\mu \unlhd \lambda$, and each appears with multiplicity $K_{\mu',\lambda'}$.
\end{remark}

\section{The general depth-zero Bernstein block}

In this section we continue to work under the convention that the Zelevinsky decomposition numbers $m(b;a)$ are known quantities; the justification for this was given in the previous section. Moreover, in many cases these multiplicities can be computed explicitly using Proposition~\ref{prop:disjoint-segments}.

Since every hyperspecial subgroup of $\operatorname{GL}_N$ is conjugate, any irreducible depth-zero representation of $G_N$ admits a non-zero $K_{N+}$-fixed vector.

In this final section we treat the general case. Let
$$
N = \sum_{i=1}^{t} n_i r_i,
$$
where $n_i, r_i$ are positive integers. For each $i = 1,\dots,t$, let $\tau^{(i)}$ be an irreducible depth-zero supercuspidal representation of $G_{r_i}$, and let $(K_{r_i}, \tau^{(i)}_0)$ be its type, where $\tau^{(i)}_0$ is an irreducible cuspidal representation of $\operatorname{GL}_{r_i}(\mathbb{F}_q)$. We assume that for distinct $i, j$, the representations $\tau^{(i)}_0$ and $\tau^{(j)}_0$ are not isomorphic; in particular, the $\tau^{(i)}$ lie in pairwise distinct Bernstein blocks of $G_{r_i}$ and $G_{r_j}$.

Let $M$ be the Levi subgroup of $G_N$ of the form
$$
M = \prod_{i=1}^{t} (G_{r_i})^{n_i},
$$
and let $P = MU$ be the corresponding standard parabolic subgroup. Set
$$
\tau = \bigotimes_{i=1}^{t} (\tau^{(i)})^{\otimes n_i},
$$
which is an irreducible depth-zero supercuspidal representation of $M$. Let $Q = L U'$ be the standard parabolic subgroup of $G_N$ with Levi factor
$$
L = \prod_{i=1}^{t} G_{n_i r_i}.
$$
We consider the Bernstein block $[M, \tau]$.

On the finite side, let
$$
M^F = \prod_{i=1}^{t} (\operatorname{GL}_{r_i}(\mathbb{F}_q))^{n_i},
\qquad
P^F = M^F U^F,
$$
and let
$$
L^F = \prod_{i=1}^{t} \operatorname{GL}_{n_i r_i}(\mathbb{F}_q),
\qquad
Q^F = L^F U^{\prime F}.
$$
For each $i$, set
$$
\rho_i = (\tau^{(i)}_0)^{\otimes n_i},
$$
which is an irreducible cuspidal representation of $(\operatorname{GL}_{r_i}(\mathbb{F}_q))^{n_i}$. Define
$$
\rho = \bigotimes_{i=1}^{t} \rho_i,
$$
an irreducible cuspidal representation of $M^F$.

The Weyl group of $M^F$ with respect to $\rho$ is
$$
W = W(M^F, \rho) = \prod_{i=1}^{t} S_{n_i}.
$$
For each $i$, write $W_i = S_{n_i}$. Irreducible representations of $W$ are of the form
$$
\lambda = \lambda_1 \otimes \lambda_2 \otimes \cdots \otimes \lambda_t,
$$
where each $\lambda_i$ is an irreducible representation (equivalently, a partition) of $W_i$.

Let $V^{(\rho_i)}_{\lambda_i}$ denote the irreducible constituent of the Harish-Chandra series induced from $\rho_i$ on $(\operatorname{GL}_{r_i}(\mathbb{F}_q))^{n_i}$ corresponding to $\lambda_i$. For $\lambda = \lambda_1 \otimes \cdots \otimes \lambda_t$, let
$$
V^{(\rho)}_{\lambda}
$$
denote the irreducible constituent of the Harish-Chandra series
$$
\operatorname{Ind}_{P^F}^{G_N^F}(\rho)
$$
corresponding to $\lambda$.

We use the symbol $\times$ to denote parabolic induction both for $p$-adic groups and for finite reductive groups; the meaning will be clear from the context.

\begin{definition}
For each $i = 1,\dots,t$, let $\lambda_i$ be an irreducible representation (or partition) of $W_i = S_{n_i}$, and let $V^{(\rho_i)}_{\lambda_i}$ be the corresponding irreducible constituent of the Harish-Chandra series induced from $\rho_i$. We define
$$
V_{\lambda_1,\lambda_2,\dots,\lambda_t}
= V^{(\rho_1)}_{\lambda_1} \times V^{(\rho_2)}_{\lambda_2} \times \cdots \times V^{(\rho_t)}_{\lambda_t},
$$
where $\times$ denotes parabolic induction for the finite reductive group $G_N^F$.
\end{definition}

\begin{definition}
For two $t$-tuples of partitions $\lambda = (\lambda_1,\dots,\lambda_t)$ and $\mu = (\mu_1,\dots,\mu_t)$, we write
$$
\lambda \unlhd \mu
$$
if $\lambda_i \unlhd \mu_i$ for every $i = 1,\dots,t$.
\end{definition}

\begin{proposition}
The representation $V_{\lambda_1,\lambda_2,\dots,\lambda_t}$ is irreducible. Moreover, if $(\lambda_1,\lambda_2,\dots,\lambda_t) \neq (\mu_1,\mu_2,\dots,\mu_t)$, then $V_{\lambda_1,\lambda_2,\dots,\lambda_t} \not\cong V_{\mu_1,\mu_2,\dots,\mu_t}$.
\end{proposition}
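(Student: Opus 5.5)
Both claims will follow from Howlett--Lehrer theory for the cuspidal pair $(M^F,\rho)$: compute the endomorphism algebra of the big series, then apply transitivity of Harish--Chandra induction.

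\textbf{Step 1: the relative Weyl group.} Put $\rho=\bigotimes_i(\tau^{(i)}_0)^{\otimes n_i}$. An element of $N_{G_N^F}(M^F)$ permutes the $\operatorname{GL}_{r}$-blocks of $M^F$, moving each block to one of the same size. It fixes $\rho$ exactly when every block carrying $\tau^{(i)}_0$ is sent to a block carrying a representation isomorphic to $\tau^{(i)}_0$. Because the $\tau^{(i)}_0$ are pairwise non-isomorphic (this holds even when some $r_i$ coincide), such a permutation preserves the colour classes. Hence
$$
W(M^F,\rho)=\prod_i S_{n_i}=W(M^F\cap L^F,\rho),
$$
where the right-hand side is the relative Weyl group computed inside $L^F$. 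Consequently the Hecke algebra $\mathcal H(W(M^F,\rho),\{q^{c_s}\})$ for $G_N^F$ coincides with the parabolic subalgebra attached to $L^F$, namely $\bigotimes_i\mathcal H(S_{n_i},q^{r_i})$. The parameters agree because each $c_s$ is computed in the Levi subgroup generated by $M^F$ and the relevant root subgroups, which lies in $L^F$.

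\textbf{Step 2: irreducibility.} By transitivity of Harish--Chandra induction,
$$
\operatorname{Ind}_{P^F}^{G_N^F}\rho=\operatorname{Ind}_{Q^F}^{G_N^F}\Bigl(\operatorname{Ind}_{P^F\cap L^F}^{L^F}\rho\Bigr).
$$
Apply $\operatorname{Hom}$ with this module on both sides, together with Frobenius reciprocity and the Mackey formula as in the proof of Lemma~\ref{lem:Hom-decomp}. The double cosets that contribute are those $w$ with ${}^w M^F\subseteq L^F$ and ${}^w\rho\cong\rho$ after moving inside $L^F$. By Step~1 these all lie in $L^F$ modulo $M^F$. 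Therefore, for representations $X,Y$ of $L^F$ in the series of $\rho$,
$$
\dim\operatorname{End}_{G_N^F}\bigl(\operatorname{Ind}_{Q^F}^{G_N^F}X,\operatorname{Ind}_{Q^F}^{G_N^F}Y\bigr)=\dim\operatorname{Hom}_{L^F}(X,Y).
$$
The Mackey terms from nontrivial $w\in W_{L^F}\backslash W/W_{L^F}$ vanish: they would produce a nonzero Hom between Harish--Chandra series of $L^F$ whose cuspidal supports are not $L^F$-conjugate, and disjointness of Harish--Chandra series rules this out. Take $X=Y=V^{(\rho_1)}_{\lambda_1}\otimes\cdots\otimes V^{(\rho_t)}_{\lambda_t}$, which is irreducible for $L^F$. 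Its induction then has one-dimensional endomorphism algebra, so it is irreducible. Taking $X\ne Y$ distinct tensor products gives a zero Hom, which is the non-isomorphism claim.

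\textbf{Main obstacle.} The delicate point is the Mackey vanishing: showing that for $w\notin W_{L^F}$ the twisted cuspidal pair ${}^w(M^F,\rho)$ is not $L^F$-conjugate to $(M^F,\rho)$. I would argue by tracking the multiset of cuspidal labels on each $\operatorname{GL}_{n_ir_i}$ factor. Inside the factor $\operatorname{GL}_{n_ir_i}$, ${}^w\rho$ has cuspidal support containing some $\tau^{(j)}_0$ with $j\ne i$, whereas $\rho$ restricted there is $(\tau^{(i)}_0)^{\otimes n_i}$. Uniqueness of cuspidal support up to conjugacy in $\operatorname{GL}_{n_ir_i}(\mathbb F_q)$ then finishes the argument.
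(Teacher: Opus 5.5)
Your argument is correct, but it takes a different route from the paper's. The paper never examines the Mackey terms for $L^F$. It uses only the numerical fact $\dim\operatorname{End}_{G_N^F}(\operatorname{Ind}_{P^F}^{G_N^F}\rho)=|W(M^F,\rho)|=\prod_i n_i!$, which rests on the same computation as your Step~1. It then writes $\operatorname{Ind}_{P^F}^{G_N^F}\rho\cong\sum\prod_i\dim(\lambda_i)\,V_{\lambda_1,\dots,\lambda_t}$ by induction in stages, and squeezes:
$|W|=\langle\chi,\chi\rangle\ge\sum\prod_i\dim(\lambda_i)^2\langle\chi_{\lambda_1,\dots,\lambda_t},\chi_{\lambda_1,\dots,\lambda_t}\rangle\ge\sum\prod_i\dim(\lambda_i)^2=|W|$.
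Positivity then forces each character $\chi_{\lambda_1,\dots,\lambda_t}$ to have norm one and distinct ones to be orthogonal.

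You instead prove, via the Mackey formula for $r^{G_N^F}_{L^F}\circ i^{G_N^F}_{L^F}$ and disjointness of Harish--Chandra series, that $\operatorname{Ind}_{Q^F}^{G_N^F}$ is fully faithful on the $\rho$-series of $L^F$. The paper's squeeze is shorter and needs no double-coset analysis. Your route buys more:
\begin{itemize}
\item it gives a Hom-isomorphism for arbitrary, not necessarily irreducible, $X,Y$ in the series;
\item it isolates exactly where the non-isomorphism of the $\tau_0^{(i)}$ is used, namely in killing the off-diagonal Mackey terms;
\item it is the natural first step toward the identification $V_{\lambda_1,\dots,\lambda_t}\cong V^{(\rho)}_{\lambda_1\otimes\cdots\otimes\lambda_t}$ sketched in the remark after the proposition.
\end{itemize}

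Two presentational points. First, the Hecke-parameter discussion in Step~1 is never used; your label count in the last paragraph does not even need $W(M^F,\rho)$. Second, the sentence saying that contributing $w$ satisfy ${}^wM^F\subseteq L^F$ is imprecise, since Mackey representatives need not conjugate $M^F$ into $L^F$. What you actually use, and should state, is this: for a nontrivial double coset, some $i$-th block of $L^F$ meets the $w$-image of a $j$-th block with $j\neq i$. The $w$-term then either vanishes by cuspidality or carries a $\tau_0^{(j)}$ label on the $\operatorname{GL}_{n_ir_i}$ factor.
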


\begin{proof}
For each $i$, let $P_i^F$ be the standard parabolic subgroup of $G_{n_i r_i}^F$ with Levi factor $(\operatorname{GL}_{r_i}(\mathbb{F}_q))^{n_i}$, so that $P_i^F = M_i^F U_i^F$ where $M_i^F = (\operatorname{GL}_{r_i}(\mathbb{F}_q))^{n_i}$. Let $\chi_{\lambda_1,\lambda_2,\dots,\lambda_t}$ be the character of $V_{\lambda_1,\lambda_2,\dots,\lambda_t}$, and let $\chi_{\lambda_1 \otimes \lambda_2 \otimes \cdots \otimes \lambda_t}$ be the character of $V^{(\rho)}_{\lambda_1 \otimes \lambda_2 \otimes \cdots \otimes \lambda_t}$.
For each $i$, the decomposition of $\operatorname{Ind}_{P_i^F}^{G_{n_i r_i}^F}(\rho_i)$ into irreducibles gives
$$
\operatorname{Ind}_{P_i^F}^{G_{n_i r_i}^F}(\rho_i) \cong \sum_{\lambda_i \in \operatorname{Irr}(W_i)} \dim(\lambda_i) \, V_{\lambda_i}^{(\rho_i)}.
$$
Similarly, the decomposition of $\operatorname{Ind}_{P^F}^{G_N^F}(\rho)$ can be written in two ways. On the one hand,
$$
\operatorname{Ind}_{P^F}^{G_N^F}(\rho) \cong \sum_{\lambda_1 \otimes \cdots \otimes \lambda_t \in \operatorname{Irr}(W)} \dim(\lambda_1 \otimes \cdots \otimes \lambda_t) \, V^{(\rho)}_{\lambda_1 \otimes \cdots \otimes \lambda_t},
$$
and on the other hand, by inducing in stages via $L^F$,
$$
\operatorname{Ind}_{P^F}^{G_N^F}(\rho)
\cong \operatorname{Ind}_{Q^F}^{G_N^F}\bigl( \operatorname{Ind}_{P^F \cap L^F}^{L^F}(\rho) \bigr)
\cong \operatorname{Ind}_{Q^F}^{G_N^F}\bigl( \otimes_i \operatorname{Ind}_{P_i^F}^{G_{n_i r_i}^F}(\rho_i) \bigr)
\cong \sum_{(\lambda_1,\dots,\lambda_t)} \prod_i \dim(\lambda_i) \, V_{\lambda_1,\dots,\lambda_t}.
$$
Since $\dim(\lambda_1 \otimes \cdots \otimes \lambda_t) = \prod_i \dim(\lambda_i)$, we may denote the character of $\operatorname{Ind}_{P^F}^{G_N^F}(\rho)$ by $\chi$ and write
$$
\chi = \sum_{(\lambda_1,\dots,\lambda_t)} \prod_i \dim(\lambda_i) \, \chi_{\lambda_1,\lambda_2,\dots,\lambda_t}.
$$
Now compute the inner product:
\begin{align*}
|W| = \langle \chi,\chi \rangle
&\geq \sum_{(\lambda_1,\dots,\lambda_t)} \prod_i \dim(\lambda_i)^2 \,
\langle \chi_{\lambda_1,\lambda_2,\dots,\lambda_t},\chi_{\lambda_1,\lambda_2,\dots,\lambda_t} \rangle \\
&\geq \sum_{(\lambda_1,\dots,\lambda_t)} \prod_i \dim(\lambda_i)^2 \\
&= \prod_{i=1}^t \Bigl( \sum_{\lambda_i \in \operatorname{Irr}(W_i)} \dim(\lambda_i)^2 \Bigr)
= \prod_{i=1}^t |W_i| = |W|.
\end{align*}
Thus equality holds throughout. Consequently,
$$
\langle \chi_{\lambda_1,\lambda_2,\dots,\lambda_t},\chi_{\lambda_1,\lambda_2,\dots,\lambda_t} \rangle = 1
\quad\text{for every } (\lambda_1,\dots,\lambda_t),
$$
and
$$
\langle \chi_{\lambda_1,\lambda_2,\dots,\lambda_t},\chi_{\mu_1,\mu_2,\dots,\mu_t} \rangle = 0
\quad\text{if } (\lambda_1,\dots,\lambda_t) \neq (\mu_1,\dots,\mu_t).
$$
Therefore each $V_{\lambda_1,\lambda_2,\dots,\lambda_t}$ is irreducible, and distinct tuples give non-isomorphic representations.
\end{proof}

\begin{remark}
Actually we can also prove that $V_{\lambda_1,\lambda_2,\dots,\lambda_t} \cong V^{(\rho)}_{\lambda_1 \otimes \lambda_2 \otimes \cdots \otimes \lambda_t}$. By mimicking the cuspidality argument of Lemma~11.1 and Lemma~11.2, we can show that
$$
\operatorname{Hom}_{P^F}(\rho, V_{\lambda_1,\lambda_2,\dots,\lambda_t})
\cong \operatorname{Hom}_{L^F }\bigl( (\operatorname{Ind}_{P^F}^{G_N^F}(\rho))_{U'^F}, \otimes_{i} V^{(\rho_i)}_{\lambda_i} \bigr)
\cong \operatorname{Hom}_{L^F \cap P^F}(\rho, \otimes_{i} V^{(\rho_i)}_{\lambda_i})
\cong \bigotimes_{i} \operatorname{Hom}_{P_{i}^F}(\rho_i, V^{(\rho_i)}_{\lambda_i}).
$$
The right-hand side is just the Hecke module of $V^{(\rho)}_{\lambda_1 \otimes \lambda_2 \otimes \cdots \otimes \lambda_t}$, and it is routine to chase the Hecke algebra action. Since the argument is essentially the same as that of Lemma~11.1 and Lemma~11.2, and we do not strictly need this result (the construction given in Definition 13.1 is already explicit enough), we omit the details.
\end{remark}

Now let $\langle a \rangle$ be an irreducible representation in the Bernstein block $[M,\tau]$. By the standard Zelevinsky theory, we can write
$$
\langle a \rangle = \langle a_1 \rangle \times \langle a_2 \rangle \times \cdots \times \langle a_t \rangle,
$$
where for each $i$, $a_i$ is an ordered multiset of segments whose cuspidal support lies in the Bernstein block $[G_{r_i}, \tau^{(i)}]_{G_{r_i}}$. By Theorem~4.3, taking $K_{N+}$-fixed vectors commutes with parabolic induction, so we obtain
$$
\langle a \rangle^{K_{N+}} \cong \langle a_1 \rangle^{K_{n_1 r_1+}} \times \langle a_2 \rangle^{K_{n_2 r_2+}} \times \cdots \times \langle a_t \rangle^{K_{n_t r_t+}}.
$$
Applying Theorem~\ref{thm:main-decomposition} to each factor $\langle a_i \rangle^{K_{n_i r_i+}}$, we have
$$
\langle a_i \rangle^{K_{n_i r_i+}} \cong \bigoplus_{\mu_i} m^{(i)}_{\mu_i} \, V^{(\rho_i)}_{\mu_i},
$$
where the multiplicities $m^{(i)}_{\mu_i}$ are explicitly given by Theorem~\ref{thm:main-decomposition} in terms of the Zelevinsky multiplicities $m(b;a)$ and Kostka numbers.

\begin{theorem}
\label{thm:general-block-decomposition}
Let
$$
\langle a\rangle
=
\langle a_1\rangle
\times
\langle a_2\rangle
\times\cdots\times
\langle a_t\rangle
$$
be an irreducible representation in the Bernstein block
$[M,\tau]$. For each $i$, let $b_i$ be the ordered
multiset of segments such that
$$
\langle a_i\rangle
=
\operatorname{St}(\langle b_i\rangle),
$$
and write
$$
\langle a_i\rangle^{K_{n_i r_i+}}
\cong
\bigoplus_{\mu_i}
m^{(i)}_{\mu_i}\,
V^{(\rho_i)}_{\mu_i}.
$$
Then
$$
\langle a\rangle^{K_{N+}}
\cong
\bigoplus_{\substack{
(P(a_1)',\dots,P(a_t)')
\unlhd
(\mu_1,\dots,\mu_t)
\unlhd
(P(b_1),\dots,P(b_t))
}}
\left(
\prod_{i=1}^{t}m^{(i)}_{\mu_i}
\right)
V_{\mu_1,\mu_2,\dots,\mu_t}.
$$
In particular, the multiplicity of
$V_{\mu_1,\mu_2,\dots,\mu_t}$ in
$\langle a\rangle^{K_{N+}}$ is
$$
\prod_{i=1}^{t}m^{(i)}_{\mu_i}.
$$
Moreover, the boundary has mutiplicity 1.

If $\langle a\rangle$ is generic, then each
$\langle a_i\rangle$ is generic, and each
$m^{(i)}_{\mu_i}$ is the corresponding Kostka number given by
Theorem~\ref{thm:main-decomposition}. Hence the multiplicity of
$V_{\mu_1,\mu_2,\dots,\mu_t}$ is the product of the
corresponding Kostka numbers.
\end{theorem}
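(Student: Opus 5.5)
The plan is to reduce everything to the single-line theorem by exploiting that the finite parabolic induction $\times$ is additive and exact. We start from the isomorphism recorded just before the statement,
$$
\langle a\rangle^{K_{N+}}\cong \langle a_1\rangle^{K_{n_1r_1+}}\times\cdots\times\langle a_t\rangle^{K_{n_tr_t+}},
$$
which is Theorem~4.3 applied to the standard parabolic $Q$ with Levi $L=\prod_i G_{n_ir_i}$. Here we note that $L\cap K_{N+}=\prod_i K_{n_ir_i+}$ and $(Q\cap K_N)/(Q\cap K_{N+})\cong Q^F$, so the right-hand side is exactly Harish-Chandra induction from $Q^F$ in the notation of Definition~13.1.

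Next we substitute the single-line decompositions $\langle a_i\rangle^{K_{n_ir_i+}}\cong\bigoplus_{\mu_i}m^{(i)}_{\mu_i}V^{(\rho_i)}_{\mu_i}$ supplied by Theorem~\ref{thm:main-decomposition}(b). Since Harish-Chandra induction distributes over direct sums and over outer tensor products, this gives
$$
\langle a\rangle^{K_{N+}}\cong\bigoplus_{(\mu_1,\dots,\mu_t)}\Bigl(\prod_i m^{(i)}_{\mu_i}\Bigr)V_{\mu_1,\dots,\mu_t}.
$$
At this point Proposition~13.3 carries the essential content. Each $V_{\mu_1,\dots,\mu_t}$ is irreducible, and distinct tuples give non-isomorphic representations. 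Hence this expression is already the isotypic decomposition, and the multiplicity of $V_{\mu_1,\dots,\mu_t}$ is exactly $\prod_i m^{(i)}_{\mu_i}$, with no collapsing between different tuples. I expect this identification to be the only delicate step. It is handled entirely by Proposition~13.3, whose inner-product argument already contains the difficulty, so here one only needs to check that the hypotheses of that proposition hold, namely that the $\tau_0^{(i)}$ are pairwise non-isomorphic.

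For the range of summation, we observe that the product $\prod_i m^{(i)}_{\mu_i}$ is nonzero only if every factor is nonzero. By Theorem~\ref{thm:main-decomposition}(b), $m^{(i)}_{\mu_i}\neq 0$ forces $P(a_i)'\unlhd\mu_i\unlhd P(b_i)$ for each $i$, which is precisely the componentwise dominance bound of the definition preceding Proposition~13.3. For the boundary tuples $(P(a_1)',\dots,P(a_t)')$ and $(P(b_1),\dots,P(b_t))$, each factor equals $1$ by the endpoint statement of Theorem~\ref{thm:main-decomposition}(b), so the product is $1$.

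Finally, for the generic case, we first argue that if $\langle a\rangle$ is generic then each $\langle a_i\rangle$ is generic. One route uses Theorem~8.5: write $\langle a\rangle$ as a product of $\operatorname{St}(\langle\Delta\rangle)$'s with pairwise unlinked segments, and group these segments by cuspidal line. Each group then gives an irreducible generic representation supported on the $i$-th line. By uniqueness of the factorization of irreducibles in a block into line components (the Zelevinsky classification), this group must coincide with $\langle a_i\rangle$. Alternatively, one can invoke heredity of Whittaker models under induction, since the product is irreducible. With genericity of each factor in hand, Theorem~\ref{thm:main-decomposition}(a) gives $m^{(i)}_{\mu_i}=K_{\mu_i',\lambda_i'}$, and the multiplicities are therefore products of Kostka numbers.
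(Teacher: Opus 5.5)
Your proposal is correct and follows the paper's proof step for step. You use Theorem~4.3 to write $\langle a\rangle^{K_{N+}}$ as a finite parabolic induction, substitute the single-line decompositions, invoke Proposition~13.3 for irreducibility and pairwise non-isomorphism, and apply Theorem~12.2 for the componentwise bounds and the generic case. Your additional justifications (the endpoint multiplicities as products of ones, and genericity of each $\langle a_i\rangle$ via grouping unlinked segments by line or via heredity of Whittaker models) fill in points the paper's proof leaves implicit.
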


\begin{proof}
By Theorem~4.3,
$$
\langle a\rangle^{K_{N+}}
\cong
\langle a_1\rangle^{K_{n_1r_1+}}
\times\cdots\times
\langle a_t\rangle^{K_{n_tr_t+}}.
$$
Substituting the decompositions
$$
\langle a_i\rangle^{K_{n_ir_i+}}
\cong
\bigoplus_{\mu_i}
m^{(i)}_{\mu_i}V^{(\rho_i)}_{\mu_i},
$$
and distributing parabolic induction over direct sums, we obtain
$$
\langle a\rangle^{K_{N+}}
\cong
\bigoplus_{(\mu_1,\dots,\mu_t)}
\left(
\prod_{i=1}^{t}m^{(i)}_{\mu_i}
\right)
\left(
V^{(\rho_1)}_{\mu_1}
\times\cdots\times
V^{(\rho_t)}_{\mu_t}
\right).
$$
By the definition of $V_{\mu_1,\dots,\mu_t}$, this is
$$
\langle a\rangle^{K_{N+}}
\cong
\bigoplus_{(\mu_1,\dots,\mu_t)}
\left(
\prod_{i=1}^{t}m^{(i)}_{\mu_i}
\right)
V_{\mu_1,\dots,\mu_t}.
$$
The preceding proposition shows that these representations are
irreducible and pairwise non-isomorphic. Theorem~
\ref{thm:main-decomposition} gives, for every $i$,
$$
P(a_i)'\unlhd\mu_i\unlhd P(b_i).
$$
By the componentwise dominance order defined above, this is
equivalent to
$$
(P(a_1)',\dots,P(a_t)')
\unlhd
(\mu_1,\dots,\mu_t)
\unlhd
(P(b_1),\dots,P(b_t)).
$$
The final assertion follows by applying the generic case of
Theorem~\ref{thm:main-decomposition} to each factor.
\end{proof}

\begin{remark}
For generic representations, each multiplicity $m^{(i)}_{\mu_i}$ in Theorem~\ref{thm:general-block-decomposition} is a Kostka number. By the strict positivity of Kostka numbers, we can determine explicitly exactly which tuples $(\mu_1,\dots,\mu_t)$ occur as irreducible constituents of $\langle a \rangle^{K_{N+}}$.
\end{remark}

\section{An example}
In this final section, we apply our general theorem to compute an interesting example. As a special case of our general theorem, we give an answer to a question posed by D.~Prasad; see the end of Question~2 in \cite{Prasad2025}.

Let $\nu$ denote the usual valuation character of the field $F$, and let $(n_1,n_2,\dots,n_t)$ be a composition of $n$. We work with $G=G_n$, its standard hyperspecial subgroup $K$, and the pro-unipotent radical $K_+$.

Consider the segment
$$
\Delta = \{\nu^{-(n-1)/2},\nu^{-(n-3)/2},\dots,\nu^{(n-1)/2}\}.
$$
Using the composition $(n_1,n_2,\dots,n_t)$, we divide this segment into $t$ consecutive pieces from left to right, obtaining segments $\Delta_1,\Delta_2,\dots,\Delta_t$. In Zelevinsky's notation, let
$$
a = \{\Delta_t,\Delta_{t-1},\dots,\Delta_1\}
$$
be the multiset consisting of these segments in reverse order. Then $\langle a \rangle$ is an irreducible representation of $G$. We describe the decomposition of $\langle a \rangle^{K_+}$. In this case, the partial order $b \le a$ reduces to merging neighbouring segments by erasing their common boundary.

Naturally, the same discussion extends verbatim to an arbitrary depth‑zero cuspidal line. To keep the notation as light as possible, we present only the Iwahori‑spherical case here.

For a partition $\lambda$ of $n$, we denote by $V_\lambda$ the corresponding irreducible representation that lies in the principal Harish‑Chandra series of $K/K_+$.  
For a multisegment $b$, let $P(b)$ be the partition obtained by taking the conjugate of the partition formed by the lengths of the segments in $b$.

The multisegment $a$ constructed above satisfies the hypothesis of Proposition~12.1 (the segments are pairwise disjoint). Consequently the Zelevinsky decomposition numbers $m(b;a)$ are either $0$ or $1$, and they are completely harmless.

Applying part (b) of Theorem~12.2, we obtain the explicit decomposition of $\langle a\rangle^{K_+}$. Under the dominance order on partitions, the constituents that appear are bounded below by $P(a)'$ and above by the partition associated with the Aubert dual of $\langle a\rangle$; both endpoints occur with multiplicity $1$.  
For a given partition $\mu$, the multiplicity of $V_\mu$ in $\langle a\rangle^{K_+}$ is
$$
m_{\mu}=\sum_{b\le a} (-1)^{|a|-|b|}\, K_{\mu,\,P(b)'}.
$$

In the following we give a beautiful combinatorial interpretation of this multiplicity.  It is just the coefficient of  ribbon Schur function.

Readers unfamiliar with ribbon Schur functions need not worry: we mention this connection only to point out the link with symmetric function theory, but we do not rely on it. In the following we give a self‑contained combinatorial formula for the multiplicity that can be evaluated by hand without any knowledge of symmetric functions.

We need to recall some classical combinatorial facts.

\begin{definition}
Let $\lambda\vdash n$, and let $T$ be a standard Young tableau of shape $\lambda$. Thus, $T$ is a filling of the Young diagram of $\lambda$ with the numbers $1,\ldots,n$, each used exactly once, such that the entries increase from left to right along each row and from top to bottom along each column. An integer $i\in\{1,\ldots,n-1\}$ is called a \emph{descent} of $T$ if, in the Young diagram of $\lambda$, the entry $i+1$ lies in a strictly lower row than the entry $i$. The \emph{descent set} of $T$ is defined by
$$
\operatorname{Des}(T)=\{i\in\{1,\ldots,n-1\}: \text{$i+1$ lies in a strictly lower row than $i$}\}.
$$
\end{definition}

In \cite[Definition~3.4.2]{FPS2022}, a quantity called the \emph{small Kostka number} is introduced:

\begin{definition}
Let $\lambda$ be a partition of $n$ and let $I$ be a subset of $\{1,\ldots,n-1\}$. The \emph{small Kostka number} $\kappa_{\lambda,I}$ is defined by
$$
\kappa_{\lambda,I}
=
\sum_{J\subseteq I}
(-1)^{|I|-|J|}
K_{\lambda,\rho(J)},
$$
where, for $J=\{j_1<\cdots<j_r\}$,
$$
\rho(J)
=
(j_1,j_2-j_1,\ldots,j_r-j_{r-1},n-j_r),
$$
and $\rho(\varnothing)=(n)$.
\end{definition}

For a composition $\beta$, we denote by $K_{\lambda,\beta}$ the Kostka number of shape $\lambda$ and weight $\beta$. It is well known that this number depends only on the multiset of parts of $\beta$, so that $K_{\lambda,\beta}=K_{\lambda,\beta^{+}}$, where $\beta^{+}$ is the partition obtained by rearranging the parts of $\beta$ in weakly decreasing order. Consequently, the multiplicity $m_\mu$ above is exactly the small Kostka number from the preceding definition. More precisely,
$$
m_\mu = \kappa_{\mu,\,J},
\qquad\text{where}\qquad
J=\{n_1,\,n_1+n_2,\,\dots,\,n_1+n_2+\cdots+n_{t-1}\}.
$$

\cite[Theorem~3.5.1]{FPS2022} records this classical combinatorial fact and gives a short proof:

\begin{proposition}
Let $\lambda$ be a partition of $n$, and let $I$ be a subset of $\{1,\ldots,n-1\}$. Then the small Kostka number associated with $\lambda$ and $I$ is equal to the number of standard Young tableaux of shape $\lambda$ whose descent set is exactly $I$.
\end{proposition}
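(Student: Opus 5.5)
Define the descent set through the sequence of row indices of the entries of a standard Young tableau. By inclusion--exclusion, it suffices to show that for every $J\subseteq\{1,\dots,n-1\}$,
$$
K_{\lambda,\rho(J)}=\#\{T \text{ standard of shape }\lambda : \operatorname{Des}(T)\subseteq J\}.
$$
Write $f(I)$ for the number of standard tableaux of shape $\lambda$ with $\operatorname{Des}(T)=I$. Then the right-hand side is $g(J):=\sum_{I\subseteq J}f(I)$. M\"obius inversion on the Boolean lattice gives
$$
f(I)=\sum_{J\subseteq I}(-1)^{|I|-|J|}g(J)=\sum_{J\subseteq I}(-1)^{|I|-|J|}K_{\lambda,\rho(J)}=\kappa_{\lambda,I}.
$$
So the whole statement reduces to this counting identity.

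To prove the identity I would use the standardization bijection. Write $J=\{j_1<\dots<j_r\}$, so $\rho(J)$ cuts $\{1,\dots,n\}$ into consecutive blocks $B_1=\{1,\dots,j_1\}$, $B_2=\{j_1+1,\dots,j_2\}$, and so on. Given a standard tableau $T$ with $\operatorname{Des}(T)\subseteq J$, replace each entry lying in $B_k$ by $k$. This produces a filling of shape $\lambda$ with content $\rho(J)$.

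I must check that the result is semistandard. Rows are weakly increasing because the map $i\mapsto k$ is monotone. Columns strictly increase: if two entries $i<i'$ in the same column both lay in one block $B_k$, then the consecutive integers $i,i+1,\dots,i'$ would contain no descent. With no descents, their row indices would be weakly decreasing, since each of $i+1,\dots,i'$ lies weakly above its predecessor. But $i'$ lies strictly below $i$, a contradiction.

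The inverse map is standardization. Given a semistandard tableau of content $\rho(J)$, number the $k$'s from left to right using the labels of $B_k$ in increasing order. The result is standard, by the usual argument that equal entries form a horizontal strip. Within a block, each successive label lies weakly to the right and hence weakly above the previous one, so it is not a descent. Therefore every descent occurs at a block boundary, that is, lies in $J$. The two maps are clearly mutually inverse.

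The main point needing care is the horizontal-strip argument. I need both that the equal entries $k$ occupy distinct columns, and that the left-to-right order on them coincides with weakly-upward row order; the latter is what ensures no descents occur inside a block. Everything else is formal inclusion--exclusion, and it also uses the remark already made in the paper that $K_{\lambda,\beta}$ depends only on $\beta^+$, so the compositions $\rho(J)$ may be used directly.
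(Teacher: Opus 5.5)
Your proof is correct. The paper gives no argument of its own here and simply cites \cite[Theorem~3.5.1]{FPS2022}; your route is the classical proof of this fact, namely standardization as a bijection between semistandard tableaux of shape $\lambda$ and content $\rho(J)$ and standard tableaux with $\operatorname{Des}(T)\subseteq J$, followed by M\"obius inversion on the Boolean lattice.

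The horizontal-strip point you flag is immediate. If a $k$ at $(r',c')$ lay strictly right of and strictly below a $k$ at $(r,c)$, the entry at $(r',c)$ would be $>k$ by the column condition and $\le k$ by the row condition. The same kind of check shows that in a standard tableau, $m+1$ lying weakly above $m$ forces $m+1$ to lie strictly to its right, which is what justifies your ``clearly mutually inverse.''
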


Combining all of the above, we obtain the following complete description of the decomposition.

\begin{proposition}
Let $a$ be the multiset of segments constructed above from the composition $(n_1,\dots,n_t)$, and let $\mu$ be a partition of $n$. Then the multiplicity $m_\mu$ of $V_\mu$ in $\langle a\rangle^{K_+}$ is equal to the number of standard Young tableaux of shape $\mu$ whose descent set is exactly
$$
\{\,n_1,\; n_1+n_2,\; \dots,\; n_1+n_2+\cdots+n_{t-1}\,\}.
$$
Moreover, every partition $\mu$ for which $V_\mu$ occurs satisfies the lower and upper dominance bounds of Theorem~12.2. The lower endpoint is $P(a)'$, the upper endpoint is determined by the Aubert dual of $\langle a\rangle$, and both endpoint constituents occur with multiplicity $1$. In particular, $V_\mu$ occurs if and only if there exists a standard Young tableau of shape $\mu$ whose descent set is exactly the prescribed set.
\end{proposition}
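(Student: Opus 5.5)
The plan is to combine part (b) of Theorem~12.2, Proposition~12.1, and the descent-set interpretation of small Kostka numbers. The argument has three steps.

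\textbf{Step 1: the alternating formula.} The segments $\Delta_1,\dots,\Delta_t$ are pairwise disjoint. Proposition~12.1 therefore gives
$$
\langle a\rangle=\sum_{b\le a}(-1)^{|a|-|b|}\pi(b),
$$
so in the notation of Theorem~12.2(b) we have $c_b=(-1)^{|a|-|b|}$ for $b\le a$. That theorem then yields
$$
m_\mu=\sum_{b\le a}(-1)^{|a|-|b|}K_{\mu,P(b)'}.
$$
Here $P(b)'$ is just the multiset of segment lengths of $b$, sorted.

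\textbf{Step 2: matching with subsets $J\subseteq I$.} Put $I=\{n_1,n_1+n_2,\dots,n_1+\cdots+n_{t-1}\}$. The segments of $a$ are consecutive pieces of one segment, so no two pieces are nested. Two pieces are linked exactly when they are adjacent, that is, when their union is again a segment. An elementary operation therefore merges two adjacent blocks, and the intersection term is empty. Consequently the multisegments $b\le a$ are exactly the coarsenings of $a$ obtained by erasing some of the internal boundaries. I will check two points here:
\begin{itemize}
\item[(i)] every such coarsening is reachable, by merging one adjacent pair at a time;
\item[(ii)] no other multisegments arise, since merged blocks remain consecutive pieces of $\Delta$.
\end{itemize}
This gives a bijection $b\leftrightarrow J\subseteq I$, where $J$ is the set of retained boundaries. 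Under it, $|a|-|b|=|I|-|J|$, and the lengths of $b$ are the parts of the composition $\rho(J)$. Since a Kostka number depends only on the sorted weight, $K_{\mu,P(b)'}=K_{\mu,\rho(J)}$. Hence
$$
m_\mu=\kappa_{\mu,I}.
$$
By Proposition~14.3 (FPS), $\kappa_{\mu,I}$ equals the number of standard Young tableaux of shape $\mu$ with descent set exactly $I$.

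\textbf{Step 3: the remaining assertions.} The dominance bounds and the multiplicity-one statement at both endpoints are read off directly from Theorem~12.2(b). The ``if and only if'' statement follows because $m_\mu$ is the number of such tableaux, so $m_\mu>0$ exactly when one exists.

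\textbf{Expected obstacle.} The only delicate point is Step 2, namely the exact identification of the Zelevinsky order $b\le a$ with subsets of boundaries, including the orientation check. I will confirm that the reverse ordering of $a$ is compatible with the ``does not precede'' convention, so that $\langle a\rangle$ and the standard modules $\pi(b)$ are the intended ones. Since the multiplicity formula involves only the sorted lengths, the ordering does not affect the count.
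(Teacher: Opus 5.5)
Your proposal is correct and follows the paper's own route. It applies Proposition~12.1 to get $c_b=(-1)^{|a|-|b|}$, feeds this into the multiplicity formula of Theorem~12.2(b), identifies $b\le a$ with subsets $J\subseteq I$ to obtain the small Kostka number $\kappa_{\mu,I}$, and concludes with Proposition~14.3; your explicit check that the multisegments $b\le a$ are exactly the coarsenings of $a$ fills in a step the paper only asserts.
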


For example, when $n=4$ and the composition is $(2,2)$, we have
$$
\langle a\rangle^{K_+} \cong V_{(2,2)} \oplus V_{(3,1)}.
$$

\end{document}